\newtheorem{theorem}{Theorem}
\newtheorem{CO}[theorem]{Corollary}
\newtheorem{LE}[theorem]{Lemma}
\newtheorem{lemma}[theorem]{Lemma}
\newcounter{claim_nb}[theorem]
\newtheorem{claim}[claim_nb]{Claim}
\newtheorem*{claim*}{Claim}
\newcommand{\zB}{\mathcal B}
\newcommand{\zC}{\mathcal C}
\newcommand{\zR}{\mathcal R}
\newcommand{\zP}{\mathcal P}
\newcommand{\zQ}{\mathcal Q}
\DeclareMathOperator{\im}{im}
\newcommand{\Aone}{A1 }
\newcommand{\Atwo}{A2 }
\newcommand{\Athr}{A6 }
\newcommand{\Afou}{A3 }
\newcommand{\Afiv}{A4 }
\newcommand{\Asix}{A5 }
\newcommand{\Asev}{A7 }
\newcommand{\ignore}[1]{}
\newcommand{\obar}[1]{\mkern 1.5mu\overline{\mkern-1.5mu#1\mkern-1.5mu}\mkern 1.5mu}
\title{Half-integral linkages in highly connected directed graphs\thanks{Supported by the European Research Council under the European Unions Seventh Framework Programme (FP7/2007-2013)/ERC Grant Agreement no. 279558}}
\author{
Katherine Edwards\thanks{\texttt{katherine.edwards2@gmail.com}}
\and
Irene Muzi\thanks{Department of Computer Science, University of Rome, ``La Sapienza", Rome, Italy.}
\and
Paul Wollan\thanks{Department of Computer Science, University of Rome, ``La Sapienza", Rome, Italy.  email: \texttt{wollan@di.uniroma1.it}.}
}
\begin{document}
\maketitle

\begin{abstract}
We study the half-integral $k$-Directed Disjoint Paths Problem ($\tfrac12$kDDPP) in highly strongly connected digraphs.
The integral kDDPP is NP-complete even when restricted to instances where $k=2$, and the input graph is $L$-strongly connected, for any $L\geq 1$.
We show that when the integrality condition is relaxed to allow each vertex to be used in two paths, the problem becomes efficiently solvable in highly connected digraphs (even with $k$ as part of the input).
Specifically, we show that there is an absolute constant $c$ such that for each $k\geq 2$ there exists $L(k)$ such that $\tfrac12$kDDPP is solvable in time $O(|V(G)|^c)$ for a $L(k)$-strongly connected directed graph $G$.
As the function $L(k)$ grows rather quickly, we also show that $\tfrac12$kDDPP is solvable in time $O(|V(G)|^{f(k)})$ in $(36k^3+2k)$-strongly connected directed graphs.
We also show that for each $\epsilon<1$ deciding half-integral feasibility of kDDPP instances is NP-complete when $k$ is given as part of the input, even when restricted to graphs with strong connectivity $\epsilon k$.
\end{abstract}



\section{Introduction}

Let $k \ge 1$ be a positive integer.  An \emph{instance of a directed $k$-linkage problem} is an ordered tuple $(G, S, T)$ where $G$ is a directed graph and $S = (s_1, \dots, s_k)$ and $T = (t_1, \dots, t_k)$ are each ordered sets of $k$ distinct vertices in $G$.  The instance is \emph{integrally feasible} if there exist paths $P_1, \dots, P_k$ such that $P_i$ is a directed path from $s_i$ to $t_i$ for $1 \le i \le k$ and the paths $P_i$ are pairwise vertex disjoint.  The paths $P_1, \dots, P_k$ will be referred to as an \emph{integral solution} to the linkage problem.  

The \emph{$k$-Directed Disjoint Paths Problem} ({\bf kDDPP}) takes as input an instance of a directed $k$-linkage problem.  If the problem is integrally feasible, we output an integral solution and otherwise, return that the problem is not feasible.
The kDDPP is notoriously difficult.  The problem was shown to be NP-complete even under the restriction that $k = 2$ by Fortune, Hopcroft and Wyllie \cite{FHW}.  

In an attempt to make the kDDPP more tractable, Thomassen asked if the problem would be easier if we assume the graph is highly connected.  Define a \emph{separation} in a directed graph $G$ as a pair $(A, B)$ with $A, B \subseteq V(G)$ such that $A \cup B = V(G)$ and where there does not exist an edge $(u,v)$ with $ u \in A \setminus B$ and $v \in B \setminus A$.  The \emph{order} of the separation $(A, B)$ is $|A \cap B|$.  The separation is \emph{trivial} if $A \subseteq B$ or $B \subseteq A$.  The graph $G$ is \emph{strongly $k$-connected} if $|V(G)| \ge k+1$ and there does not exist a nontrivial separation of order at most $k-1$.  Let $k \ge 1$ and define a directed graph $G$ to be \emph{integrally $k$-linked} if every linkage problem $(G, S, T)$ is integrally feasible.  Thomassen conjectured \cite{Thomassen1} that there exists a function $f$ such that every $f(k)$-strongly connected digraph $G$ is integrally $k$-linked.  He later answered his own conjecture in the negative \cite{Thomassen2}, showing that no such function $f(k)$ exists.  Moreover, he also showed \cite{Thomassen2} for all $L \ge 1$, the 2DDPP is NP-complete even when restricted to problem instances where the graph is $L$-strongly connected.  

In this article, we relax the kDDPP problem by requiring that a potential solution not use any vertex more than twice. Define a directed $k$-linkage problem $(G,S, T)$ to be \emph{half-integrally feasible} if $S = (s_1, \dots, s_k)$, and $T = (t_1, \dots, t_k)$ and there exist paths $P_1, \dots, P_k$ such that:
\begin{itemize}
\item for all $1 \le i \le k$, $P_i$ is a directed path from $s_i$ to $t_i$, and
\item for every vertex $v \in V(G)$, $v$ is contained in at most two distinct paths $P_i$.
\end{itemize}
The paths $P_1, \dots, P_k$ form a \emph{half-integral solution}.  

The main result of this article is that the $\frac{1}{2}$kDDPP is polynomial time solvable (even with $k$ as part of the input) when the graph is sufficiently highly connected.  Define a graph $G$ to be \emph{half-integrally $k$-linked} if every $k$ disjoint paths problem $(G, S,T)$ is half-integrally feasible.  
\begin{theorem}\label{thm:linked}
For all integers $k \ge 1$, there exists a value $L(k)$ such that every strongly $L(k)$-connected graph is half-integrally $k$-linked.  Moreover, there exists an absolute constant $c$ such that given an instance $(G, S, T)$ of the $\frac{1}{2}$kDDPP where $G$ is $L(k)$-connected, we can find a solution in time $O(|V(G)|^c)$.  
\end{theorem}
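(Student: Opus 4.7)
The strategy will follow the paradigm of the undirected Robertson--Seymour linkage theorem, adapted to the directed setting via the directed grid theorem of Kawarabayashi and Kreutzer. The plan has three main stages: extract a large canonical routing structure, link the terminals into it, and then realize the desired pairing inside the structure. The point at which half-integrality is genuinely used is the linkage from the terminals into the structure, since Thomassen's construction shows that this step cannot be done integrally even under arbitrarily high strong connectivity.

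First, I would argue that a sufficiently strongly $L(k)$-connected digraph $G$ contains a cylindrical grid $W$ of order polynomial in $k$ as a butterfly minor, with explicit branch sets. Strong $L$-connectivity forces large directed tree-width (via a standard separator/sweep argument bounding haven order), and the directed grid theorem then yields the cylindrical grid. Choosing $L(k)$ large enough that the resulting grid has at least, say, $ck$ concentric cycles and $ck$ radial paths, for a suitable absolute constant $c$, is the first requirement on $L(k)$.

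Second, I would use strong connectivity to move the $2k$ terminals $s_1, \dots, s_k, t_1, \dots, t_k$ into $W$. Here is where the relaxation enters: as long as $L(k) \ge 2k + O(k)$ on top of the grid requirement, Menger's theorem gives two internally disjoint linkages from the terminals to distinct vertices on the outer cycles of $W$ (one ``in'' linkage from the $s_i$ and one ``out'' linkage to the $t_i$; if obstructions arise, take two parallel copies). The union of two vertex-disjoint such linkages uses every vertex of $G \setminus V(W)$ at most twice, which is exactly the half-integrality budget. This step is executed via standard max-flow, hence in polynomial time with an exponent independent of $k$.

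Third, inside the cylindrical grid $W$, one shows that any prescribed pairing of $2k$ attachment points on the outer ring can be realized by a half-integral linkage. The cylindrical structure permits a ``two-copy'' routing: each pair is routed along a concentric cycle at its own designated depth, with a pair of radial detours, so that two distinct pairs share vertices only at the finitely many crossing points of their routes and each vertex lies on at most two such routes. Concatenating the terminal linkages of the second stage with these internal routes produces the required half-integral solution, and the construction is algorithmic.

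The main obstacle I anticipate is the quantitative one: the directed grid theorem currently yields an astronomical bound on $L(k)$, which is exactly why the paper separates the qualitative result (arbitrary $L(k)$) from the sharper bound $36k^3 + 2k$ announced in the abstract, presumably obtained by bypassing the general grid theorem in favour of a direct construction of a large, structured web in highly connected digraphs. The algorithmic part, namely obtaining the absolute constant $c$ in the exponent, then hinges on having polynomial-time subroutines for each of the three stages above, which is plausible because each reduces to either max-flow or routing in a grid-like subgraph.
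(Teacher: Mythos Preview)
Your three-stage architecture matches the paper's, but there are two genuine gaps, one in each of the first two stages.

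\textbf{Stage 1 and the running time.} Invoking the Kawarabayashi--Kreutzer grid theorem does \emph{not} give the algorithmic conclusion of Theorem~\ref{thm:linked}. As the paper states explicitly in the introduction, the algorithm extracting a cylindrical grid (or equivalently a depth-two bramble) from large directed tree-width runs in time $n^{g(k)}$, not $n^{c}$ for an absolute $c$. Your final sentence (``plausible because each reduces to either max-flow or routing'') is therefore wrong: the bottleneck is precisely finding the routing structure, not the subsequent Menger calls. The paper's substitute is Theorem~\ref{thm:findbramble}, which builds a depth-two bramble directly from a large well-linked set in time $O(n^{c})$; the well-linked set is obtained for free because in an $L(k)$-strongly connected graph any $L(k)$ vertices on a greedily found long path are well-linked. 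So the grid theorem is bypassed entirely for Theorem~\ref{thm:linked}.

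\textbf{Stage 2 and the linking step.} You write ``Menger's theorem gives two internally disjoint linkages \dots\ if obstructions arise, take two parallel copies''. This is exactly where the real work is, and ``take two parallel copies'' does not do it. The issue is not finding an $S$-to-structure linkage and a structure-to-$T$ linkage separately; it is controlling how they interact with each other \emph{and} with the structure so that the final concatenation stays half-integral. Contracting the bags of the bramble can destroy the ambient connectivity, so Menger applied in the contracted graph may fail outright. The paper's Section~\ref{sec:linking} (Lemmas~\ref{lem:linkcontracted} and~\ref{lem:linkup}) handles this by an iterative uncrossing argument that discards $O(k^3)$ bramble bags until the remaining sub-bramble is genuinely well-connected from $S$ and to $T$ in the contracted graph, and then verifies seven interaction properties (\Aone--\Asev) that together force half-integrality of the concatenated solution. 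Your proposal contains no mechanism for this, and the paper flags it as ``the main contribution of Theorem~\ref{thm:main}''.
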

The assumption that $G$ is highly connected in Theorem \ref{thm:linked} cannot be omitted under the usual complexity assumptions.

\begin{theorem}\label{thm:hardness}
For all $\epsilon < 1$, it is NP-complete to determine whether a given kDDPP instance $(G, S, T)$ half-integrally feasible, even under the assumption that $G$ is $\epsilon k$-strongly connected.
\end{theorem}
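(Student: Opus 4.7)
The plan is to reduce from the integral 2-Directed Disjoint Paths Problem, which by Thomassen's result cited above is NP-complete on $L$-strongly connected digraphs for every fixed $L\ge 1$. Fixing $\epsilon<1$, given an $L$-strongly connected 2DDPP instance $(H,s_1,s_2,t_1,t_2)$, the idea is to construct a $\tfrac{1}{2}$kDDPP instance $(G,S,T)$ with $k$ chosen large (in terms of $\epsilon$ and the size of $H$) such that $G$ is $\epsilon k$-strongly connected, and the new instance is half-integrally feasible if and only if the original 2DDPP instance is integrally feasible. Since Thomassen's hardness holds for every $L$, we are free to choose $L$, which gives us room to meet the connectivity requirement.

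The construction would embed $H$ as an induced subgraph of $G$ and add $k-2$ new terminal pairs $(s_3,t_3),\ldots,(s_k,t_k)$ together with an auxiliary \emph{saturation gadget}. The gadget is designed so that in any half-integral solution the routing of each extra path $P_i$ (for $i\ge 3$) is forced by structural bottlenecks, and so that these forced routings collectively visit each auxiliary vertex outside $V(H)$ exactly twice and each interior vertex of $V(H)$ exactly once. The first property saturates the half-integral capacity of every auxiliary vertex, which prevents $P_1$ and $P_2$ from ever leaving $V(H)$. The second property leaves only one unit of residual half-integral capacity at each interior vertex of $H$, which forces $P_1$ and $P_2$ to be vertex-disjoint there; additional care at the four original terminals (saturated by a small dedicated piece of the gadget) ensures they also do not coincide at $s_1,s_2,t_1,t_2$. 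Together these observations extract an integral 2DDPP solution from any half-integral kDDPP solution, and the converse direction is immediate: given $P_1,P_2$ in $H$, one combines them with the forced dummy routings, and the single-visit property guarantees half-integrality.

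To verify $\epsilon k$-strong connectivity of $G$, one would use the $L$ internally disjoint paths already present in $H$ together with roughly $k-2$ additional internally disjoint paths routed through the dummy structure; choosing $L$ together with $k$ large enough in terms of $\epsilon$ pushes the connectivity above $\epsilon k$. The main obstacle, in my view, is the tension between this connectivity requirement and the forcing of the dummy paths: to achieve in- and out-degree at least $\epsilon k$ at each $s_i,t_i$ for $i\ge 3$, one must add many edges incident to these vertices, which naturally creates alternative $s_i$-to-$t_i$ routings that would break the saturation argument. Overcoming this requires engineering the gadget so that every additional edge incident to $s_i$ (respectively $t_i$) leads only into a region from which the only way to reach $t_i$ (respectively from which $s_i$ can be reached) still passes through the designated bottleneck; this is where I expect most of the combinatorial work of the proof to be concentrated.
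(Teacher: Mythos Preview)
What you have written is a plan, not a proof: the saturation gadget is never actually constructed, and the difficulty you yourself single out --- achieving $\epsilon k$-strong connectivity at the dummy terminals without creating alternative routings that destroy the forcing --- is left open. Since this is exactly the crux of the argument, the proposal as it stands has a genuine gap. There is also a secondary issue you gloss over: to leave residual capacity one at every vertex of $H$ you need the dummy paths collectively to hit each $H$-vertex exactly once, and you give no mechanism for this. You cannot assume $H$ has a Hamiltonian path, and adding new edges inside $V(H)$ would let $P_1,P_2$ escape $H$; the natural fix (one dummy pair $(s_v,t_v)$ with $s_v\to v\to t_v$ for every $v\in V(H)$) forces $k$ to grow with $|V(H)|$, which is fine, but then the connectivity requirement $\epsilon k$ also grows with $|V(H)|$ and cannot be supplied by $H$ itself --- so invoking Thomassen's $L$-strongly-connected hardness with ``$L$ chosen large'' does not help.

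The paper sidesteps all of this with a direct reduction from 3SAT rather than from 2DDPP, and the device it uses also resolves your obstacle cleanly: it adds a set $W=\{w_1,\dots,w_M\}$ of universal vertices (adjacent to and from every other vertex), giving $M$-strong connectivity for free, and then makes each $w_i$ simultaneously a source and a target (pairing $w_i$ with $w_{M/2+i}$ in both directions). Since every $w_i$ is an endpoint of two solution paths, it is automatically saturated in any half-integral solution and hence unusable by any other path; the connectivity-boosting edges therefore never interfere with the forcing argument. With the $W$-vertices neutralised, the remaining terminal pairs (three $s_j\text{--}t_j$ pairs that must thread through variable gadgets, one $b_i\text{--}b_i'$ pair per variable, one $c_j\text{--}c_j'$ pair per clause) encode satisfiability in the usual FHW style. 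Your approach could very likely be completed using this same $W$-trick --- add one dummy pair per vertex of $H$ and a $W$-clique sized to reach $\epsilon k$ --- but that construction, and the verification that it works, is precisely what your proposal is missing.
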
  

The value for $L(k)$ in Theorem \ref{thm:linked} grows extremely quickly.  However, when we fix $k$, we can still efficiently solve the $\frac{1}{2}$kDDPP with a significantly weaker bound on the connectivity than that given in Theorem \ref{thm:linked}.

\begin{theorem}\label{thm:2kconn}
There exists a function $f$ satisfying the following.  Let $k \ge 1$ be a positive integer.   Given a $k$-linkage problem $(G, S, T)$ such that $G$ is $(36k^3 + 2k)$-strongly connected, we can determine if the problem is half-integrally feasible and if so, output a half-integral solution, in time $O(|V(G)|^{f(k)})$. 
\end{theorem}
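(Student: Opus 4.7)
The plan is to reduce the half-integral kDDPP in highly connected digraphs to a selection problem over $k$ bundles of internally disjoint paths, solved by Menger's theorem followed by a rerouting argument.

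First, for each pair $(s_i,t_i)$, I would construct a family $\mathcal{P}_i$ of internally vertex-disjoint $s_i$-$t_i$ paths. Deleting the other $2k-2$ terminals from $G$ leaves a digraph that is still $(36k^3+2)$-strongly connected, and a maximum-flow/Menger computation then yields at least $L:=36k^3+2$ internally disjoint $s_i$-$t_i$ paths in this reduced graph. Performing this for each $i$ produces the $k$ bundles in polynomial time. The subtracted $2k-2$ is where the $+2k$ term in the connectivity hypothesis is spent.

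Second, I would show that among these bundles one can select one path per bundle, $P_i\in\mathcal{P}_i$, such that every vertex of $G$ lies in at most two selected paths. The key structural fact is that any fixed vertex $v$ lies in at most one path of each $\mathcal{P}_i$ (by internal disjointness within a bundle), hence in at most $k$ paths across $\bigcup_i\mathcal{P}_i$. I would realize the selection by an iterative swap procedure: start from an arbitrary choice $(P_1,\dots,P_k)$, and while some vertex $v$ is contained in three or more selected paths, pick one offending bundle $i$ and replace $P_i$ by another path in $\mathcal{P}_i$ avoiding $v$. A monotone potential, for instance the lexicographically sorted vector of vertex-counts read in decreasing order, certifies termination in polynomially many swaps, because $L\gg k$ always leaves a valid alternative. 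The cubic scaling $L\ge 36k^3$ is calibrated so that the $O(k^2)$ pairwise conflicts each rerouting might create are absorbed by the alternatives available in the other bundles.

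Algorithmically, computing the $k$ bundles is polynomial, and the rerouting loop is polynomial in the number of bundles and paths; should a direct polynomial-time swap argument prove insufficient, one can fall back on brute-force enumeration of $\prod_i|\mathcal{P}_i|=O((36k^3+2)^k)$ candidate selections, each validated in polynomial time and absorbed into the $O(|V(G)|^{f(k)})$ bound. The main obstacle is the existence part of Step 2 and its constructive version: a naive probabilistic argument (selecting a uniformly random path per bundle) yields expected vertex-count at most $k/L\ll 1$, but concentration is not strong enough to rule out over-used vertices across all $n$ of them simultaneously. What is needed is either a Lovász-local-lemma style argument --- delicate because the dependency graph of the events ``vertex $v$ overused'' spans many bundles and long paths --- or a direct potential-based rerouting argument; designing the correct potential so that \emph{every} swap strictly decreases it, even as new conflicts may appear elsewhere, is the technical core of the proof.
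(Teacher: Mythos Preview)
Your proposal contains a genuine gap that you yourself identify: the existence claim in Step~2 is never established. You sketch two candidate arguments---an LLL approach and a potential-based swap procedure---and correctly observe that neither works as stated. The LLL fails because the dependency degree among the bad events $\{v\text{ is triply covered}\}$ scales with the total length of the paths in the bundles, hence with $n$, while your probability bound $\binom{k}{3}/L^3$ is independent of $n$; no choice of $L$ depending only on $k$ can close this. The swap argument is not an argument at all until a potential is exhibited and shown to strictly decrease; you concede this is ``the technical core of the proof'' but do not supply it. The brute-force fallback addresses only the algorithmic side: if no valid selection exists among your fixed Menger bundles, enumeration simply reports failure, but this does not certify half-integral \emph{infeasibility} of $(G,S,T)$, since a solution may well use paths outside your bundles. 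So the procedure neither proves feasibility nor correctly decides it.

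For comparison, the paper takes an entirely different route, not touching Menger bundles between terminals at all. It runs a tree-width dichotomy: either $G$ has bounded directed tree-width, in which case the half-integral problem is solved directly by the Johnson--Robertson--Seymour--Thomas dynamic program on a tree decomposition (after doubling vertices); or $G$ contains a large bramble, which is refined via a well-linked set into a large bramble of depth two. The connectivity hypothesis $(36k^3+2k)$ is then used not to build many $s_i$--$t_i$ paths, but to link $S$ into, and $T$ out of, a carefully pruned sub-bramble $\zB'$ with controlled overlaps, after which the depth-two property of $\zB'$ lets one route inside it half-integrally. The cubic term arises from an iterated uncrossing of separations between the terminals and the contracted bramble, not from any path-counting argument of the kind you propose.
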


Given that the kDDPP is NP-complete even in the case $k=2$, previous work on the problem has focused on various relaxations of the problem.  Schrijver \cite{Schrijver94} showed that for fixed $k$, the kDDPP is polynomial time solvable when the input graph is assumed to be planar.  Later, Cygan et al \cite{Cygan2013} improved this result, showing that the kDDPP is fixed parameter tractable with the assumption that the input graph is planar.  In their recent series of articles \cite{KK1, KKK, digrid} leading to the breakthrough showing the grid theorem holds for directed graphs, Kawarabayashi and Kreutzer and Kawarabayashi et al showed the following relaxation of the kDDPP can be efficiently resolved for fixed $k$.  They showed that there exists a polynomial algorithm which, given an instance $(G, S = (s_1, \dots, s_k), T = (t_1, \dots, t_k))$ of the kDDPP, does one of the following:
\begin{itemize}
\item find directed paths $P_i$, $1 \le i \le k$, such that $P_i$ links $s_i$ to $t_i$ and for every vertex $v$ of $G$, $v$ is in at most four distinct $P_i$, or
\item determine that no integral solution to $(G, S, T)$ exists.
\end{itemize}
In terms of hardness results, Slivkins \cite{Slivkins} showed that the kDDPP is $W[1]$-complete even when restricted to acyclic graphs.  Kawarabayashi et al \cite{KKK} announced that the proof of Slivkins result can be extended to show that the $\frac12$kDDPP is also $W[1]$-complete.

There are two primary steps in the proof of Theorem \ref{thm:linked}.  First, we show that any highly connected graph contains a large structure which we can use to connect up the appropriate pairs of vertices.   The exact structure we use is a \emph{bramble of depth two}.  A bramble is a set of pairwise touching, connected (strongly connected) subgraphs; they are widely studied certificates of large tree-width both in directed and undirected graphs.  See Sections \ref{sec:tw} and \ref{sec:brambles} for the exact definitions and further details.  The existence of such a bramble of depth two follows immediately from Kawarabayashi and Kreutzer's proof of the grid theorem \cite{KKfull}; however, the algorithm given in \cite{KKfull} only runs in polynomial time for fixed size of the bramble.  We show in Section \ref{sec:findingbramble} that from appropriate assumptions which will hold both in the proof of Theorem \ref{thm:linked} and Theorem \ref{thm:2kconn}, we are able to find a large bramble of depth two in time $O(n^c)$ for a graph on $n$ vertices and some absolute constant $c$.  

The second main step in the proof of Theorem \ref{thm:linked} is to show how we can use such a bramble of depth two to find the desired solution to a given instance of the $\frac12$kDDPP.  Define a \emph{linkage} to be a set of pairwise disjoint paths.  We show in Section \ref{sec:linking} that given an instance $(G, S, T)$ and a large bramble $\zB$ of depth two, we can find a smaller, sub-bramble $\zB' \subseteq \zB$ along with a linkage $\zP$ of order $k$ such that every element of $\zP$ is a path from an element of $S$ to a distinct subgraph in $\zB'$.  Moreover, the linkage $\zP$ is internally disjoint from $\zB'$.  At the same time, we find a linkage $\zQ$ from distinct subgraphs of $\zB'$ to the vertices $T$.  Thus, by linking the appropriate endpoints of $\zQ$ and $\zP$ in the bramble $\zB'$, we are able to find the desired solution to $(G, S, T)$.  The fact that the bramble $\zB'$ has depth two ensures that the solution we find uses each vertex at most twice.  
This result is given as Theorem \ref{thm:main}; the statement and proof are presented in Section \ref{sec:linking}.  

Linking to a well-behaved structure (the bramble of depth two in the instance above) is a common technique in disjoint path and cycle problems in undirected graphs.  See \cite{Kawarabayashi2008, GMXIII} for examples.  The main contribution of Theorem \ref{thm:main} is to extend the technique to directed graphs, and in particular, simultaneously find the linkage from $S$ to $\zB'$ and the linkage $\zQ$ from $\zB'$ to $T$.  This is made significantly more difficult in the directed case by the directional nature of separations in directed graphs and the fact that there is no easy way to control how the separations between $S$ and $\zB'$ and those between $\zB'$ and $T$ cross.  

The proofs of Theorems \ref{thm:linked} and \ref{thm:2kconn} are given in Section \ref{sec:proofs} and the construction showing NP-completeness in Theorem \ref{thm:hardness} is given in Section \ref{sec:lowerbounds}.

\section{Directed tree-width}\label{sec:tw}

An \emph{arborescence} is a directed graph $R$ such that $R$ has a vertex $r_0$, called the \emph{root} of $R$, with the property that for every vertex $r \in V (R)$ there is a unique directed path from $r_0$ to $r$. Thus every arborescence arises from a tree by selecting a root and directing all edges away from the root. If $r, r' \in V (R)$ we write $r' > r$  if $r' \neq r$ and there exists a directed path in $R$ from $r$ to $r'$.  If $(u,v) \in E(R)$ and $r \in V(R)$, we write $r > (u,v)$ if $r > v$ or $r = v$.  Let $G$ be a directed graph and $Z \subseteq V(G)$. A set $S \subseteq V(G) \setminus Z$ is \emph{$Z$-normal} if there is no directed walk in $G - Z$ with the first and last vertex in $S$ which also contains a vertex of $V(G) \setminus (S \cup Z)$.  Note that every $Z$-normal set is a union of strongly connected components of $G-Z$.  

Let $G$ be a directed graph.  A \emph{tree decomposition} of $G$ is a triple $(R,\beta,\gamma)$, where $R$ is an arborescence, $\beta: V(R) \rightarrow 2^{V(G)}$ and $\gamma: E(R) \rightarrow 2^{V(G)}$ are functions such that:
\begin{enumerate}
\item $\{\beta(r): r \in V(R)\}$ is a partition of $V(G)$ into non-empty sets and 
\item if $e \in E(R)$, then $\{\beta(r): r \in V(R), r > e\}$ is $\gamma(e)$-normal.
\end{enumerate}
The sets $\beta(r)$ are called the \emph{bags} of the decomposition and the sets $\gamma(e)$ are called the \emph{guards} of the decomposition.  For any $r \in V(R)$, we define $\Gamma(r) := \beta(r) \cup \{\gamma(e): \text{ $e$ incident to $r$}\}$. The \emph{width} of $(R, \beta, \gamma)$ is the smallest integer $w$ such that $|\Gamma(r)| \le w+1$ for all $r \in V(R)$.  The \emph{directed tree-width} of $G$ is the minimum width of a tree decomposition of $G$.

Johnson, Robertson, Seymour, and Thomas showed that if we assume $k$ and $w$ are fixed positive integers, then we can efficiently resolve the kDDPP when restricted to directed graphs of tree-width at most $w$ \cite{JRST}.
\begin{theorem}[\cite{JRST}, Theorem 4.8]\label{thm:DP}
For all $t \ge 1$, there exists a function $f$ satisfying the following.  Let $k \ge 1$, and let $(G, S, T)$ be an $k$-linkage problem such that the directed tree-width of $G$ is at most $t$.  Then we determine if $(G, S, T)$ is integrally feasible and if so, output an integral solution, in time $O(|V(G)|^{f(k)})$.
\end{theorem}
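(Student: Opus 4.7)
The plan is to proceed by dynamic programming over a directed tree decomposition. The first step is to compute a tree decomposition $(R,\beta,\gamma)$ of $G$ of width at most $t$, or within a constant factor of $t$; since $t$ is a fixed parameter this can be done in polynomial time, for instance by iteratively extracting balanced separators of normal sets via the haven/bramble duality established for directed tree-width. From this point on we treat $t$ as a constant and focus on the DP itself.

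The key structural fact we exploit is the $Z$-normality condition on the sets $X_e := \bigcup_{r' > e} \beta(r')$ for each edge $e \in E(R)$: after deletion of $\gamma(e)$ there is no directed walk leaving $X_e$ and re-entering it through a vertex outside $X_e \cup \gamma(e)$. Consequently, each path $P_i$ in any hypothetical solution intersects $\gamma(e)$ in an \emph{ordered} sequence of at most $|\gamma(e)| \le t+1$ vertices, and the portion of $P_i$ between consecutive crossings lies entirely on one side of $\gamma(e)$. For each node $r \in V(R)$, let $G_r$ denote the subgraph induced by $X_r \cup \Gamma(r)$ where $X_r$ is the union of $\beta(r')$ over $r' \ge r$. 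The DP state at $r$ records, for each demand index $i \in \{1,\dots,k\}$, the ordered list of vertices of $\Gamma(r)$ that the restriction of $P_i$ to $G_r$ would use, together with which of $s_i,t_i$ have already been consumed inside $X_r$ and how the pieces between consecutive guard-crossings are paired. Because $|\Gamma(r)| \le t+1$ and each path contributes at most $t+1$ crossings per guard, the number of admissible states is bounded by a function $g(k,t)$.

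Processing the arborescence bottom up, at a node $r$ with children $r_1,\dots,r_d$ we combine the precomputed tables for the $r_j$ by the following local check: fix a candidate state at $r$, choose a compatible state from each child, and verify that the pieces of path that must traverse $\beta(r)$ can be realized as an internally disjoint family of subpaths in $G[\Gamma(r)]$ with prescribed endpoints in $\Gamma(r)$. This is an integral $k'$-linkage problem on a graph of at most $t+2$ vertices with $k' \le (t+1)k$, hence decidable in constant time. Joining child tables proceeds one child at a time, so the work at $r$ is $O(g(k,t)^2 \cdot d)$, and the whole bottom-up sweep takes $g(k,t)^{O(1)} \cdot |V(G)|^{O(1)}$. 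Reading off a solution, or a proof of infeasibility, is then a standard trace-back through the table at the root.

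The main obstacle, and where the $Z$-normality is essential, is that unlike in the undirected setting a directed path may genuinely re-enter a subtree several times; the state must therefore record the full ordered trace through each guard, not merely the set of guard vertices used, and correctness of the merge at node $r$ relies on the fact that normality forbids any additional, unrecorded excursion of a path in/out of $X_r$ through vertices outside $\Gamma(r)$. Once this bookkeeping is set up correctly, the polynomial bound $O(|V(G)|^{f(k)})$ follows with $f$ depending only on $k$ (and, implicitly, on the fixed width parameter $t$).
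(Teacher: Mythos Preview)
This theorem is not proved in the present paper at all; it is quoted verbatim as Theorem~4.8 of \cite{JRST} and used as a black box (the paper only derives the half-integral Corollary~\ref{cor:solveboundedtw} from it by the doubling trick). Your sketch---dynamic programming over a directed tree decomposition, using $Z$-normality of the sets $\bigcup_{r'>e}\beta(r')$ to bound the ordered trace of each solution path through each guard $\gamma(e)$, and combining child tables one at a time---is exactly the strategy of \cite{JRST}, so there is no meaningful comparison to make with this paper's own text.
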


A simple construction shows that the same result holds to efficiently resolve $k$-linkage problems half-integrally when $k$ and the tree-width of the graph are fixed.  We first define the following operation.  To \emph{double} a vertex $v$ in a directed graph $G$, we create a new vertex $v'$ and add the edges $(u, v')$ for all edges $(u, v) \in E(G)$, the edges $(v', u)$ for all edges $(v,u)\in E(G)$ and the edges $(v,v')$ and $(v',v)$.

\begin{CO}\label{cor:solveboundedtw}
For all $t \ge 1$, there exists a function $f$ satisfying the following.  Let $k \ge 1$, and let $(G, S, T)$ be an instance of a $k$-linkage problem such that the directed tree-width of $G$ is at most $t$.  Given in input $(G, S, T)$ and a directed tree-decomposition of $G$ of width at most $t$, we can determine if the problem is half-integrally feasible and if so, output a half-integral solution, in time $O(|V(G)|^{f(k)})$.  
\end{CO}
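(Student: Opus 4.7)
The plan is to reduce the half-integral $k$-linkage problem on $G$ to the (integral) $k$-linkage problem on the directed graph $G'$ obtained from $G$ by doubling every vertex. The vertices $s_i, t_i$ retain their roles in $G'$ (so we do not need to designate the clones of terminals), and we will apply Theorem \ref{thm:DP} to $G'$. The two things that must be checked are: (i) half-integral solutions in $G$ correspond to integral solutions in $G'$, and (ii) the doubling operation blows up the directed tree-width only by a constant factor, so we can feed Theorem \ref{thm:DP} a tree decomposition of bounded width.

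For (i), suppose first that $P_1,\dots,P_k$ is a half-integral solution in $G$. Each vertex $v$ appears in at most two of the paths; for those paths, assign one to route through $v$ and the other through its clone $v'$. Since every edge $(u,v)\in E(G)$ gives rise to all four of the edges $(u,v),(u,v'),(v',u^*)$-type variants in $G'$ (and analogously on the other side), this relabelling produces pairwise internally vertex-disjoint directed paths from $s_i$ to $t_i$ in $G'$, i.e.\ an integral solution. Conversely, given an integral solution $Q_1,\dots,Q_k$ in $G'$, project each $Q_i$ back to $G$ by identifying $v'$ with $v$. The projection is a directed walk in $G$ from $s_i$ to $t_i$ that visits each vertex at most twice (once as $v$, once as $v'$); extracting a shortest $s_i t_i$-subwalk yields a directed path $P_i$. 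Any vertex $v$ of $G$ is used by at most two of the paths $P_i$, because its two preimages $v, v'$ in $G'$ are each used by at most one of the internally disjoint $Q_j$. Hence the $P_i$ form a half-integral solution.

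For (ii), if $(R,\beta,\gamma)$ is a tree decomposition of $G$ of width at most $t$, define $\beta'(r) = \beta(r)\cup\{v': v\in \beta(r)\}$ and $\gamma'(e) = \gamma(e)\cup\{v': v\in \gamma(e)\}$. The sets $\beta'(r)$ clearly partition $V(G')$. To verify the $Z$-normal condition, observe that the map $v'\mapsto v$ projects any directed walk in $G'-\gamma'(e)$ to a directed walk in $G-\gamma(e)$, and if the walk in $G'$ contained only vertices from $\bigcup_{r>e}\beta'(r)\cup\gamma'(e)$ at its endpoints, the projected walk would have its endpoints in $\bigcup_{r>e}\beta(r)$; the $\gamma(e)$-normality in $G$ then forces all interior vertices of the projected walk to lie in $\bigcup_{r>e}\beta(r)\cup\gamma(e)$, which pulls back to $\bigcup_{r>e}\beta'(r)\cup\gamma'(e)$. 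Thus $(R,\beta',\gamma')$ is a valid tree decomposition of $G'$, and since $|\Gamma'(r)|=2|\Gamma(r)|\le 2(t+1)$, its width is at most $2t+1$.

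Finally, apply Theorem \ref{thm:DP} to $(G',S,T)$ with this decomposition of width $2t+1$; this runs in time $O(|V(G')|^{f'(k)}) = O(|V(G)|^{f(k)})$ for an appropriate function $f$. If it returns an integral solution, convert it to a half-integral solution in $G$ by the projection argument above; otherwise report infeasibility, valid by the equivalence in (i). The main subtlety lies in the projection step of (i): a priori the projected walk may repeat vertices, but shortcutting to an $s_i t_i$-subpath preserves the half-integrality condition because the shortcut only removes occurrences of vertices, and the global count per vertex across all $P_i$ is bounded by the number of preimages $v,v'$ used across the disjoint $Q_j$, which is at most two.
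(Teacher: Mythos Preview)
Your approach is the same as the paper's: double every vertex, reduce to the integral problem on $G'$, and observe that the doubled tree decomposition has width at most $2t+1$. There is one small but genuine slip in your reduction. You keep the original terminals $s_i,t_i$ in $G'$, whereas the paper sets $s_i^*=s_i$ and $t_i^*=t_i'$ (the clone of $t_i$). Your choice breaks the forward direction of (i) whenever $S\cap T\neq\emptyset$: if $s_i=t_j$ for some $i\neq j$, then in any half-integral solution in $G$ both $P_i$ and $P_j$ must pass through this vertex \emph{as a terminal}, so neither occurrence can be rerouted to the clone, and the lifted paths in $G'$ cannot be made vertex-disjoint there. (The paper's definition of an instance does not require $S$ and $T$ to be disjoint.) Sending the $T$-side to the clones, as the paper does, fixes this; with that adjustment your argument is correct and matches the paper's.
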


\begin{proof}
Fix $w \ge 1$ to be a positive integer.  Let $(G, S=(s_1, \dots, s_k), T=(t_1, \dots, t_k))$ be an instance of a $k$-linkage problem where $G$ has tree-width at most $w$.  Let $G'$ be the directed graph obtained by doubling every vertex $v \in V(G)$.  Define the $k$-linkage problem $(G', S^*=(s_1^*, \dots, s_k^*), T^*=(t_1^*, \dots, t_k^*))$ by letting $s_i^* = s_i$ and $t_i^* = t_i'$ for $1 \le i \le k$.  Thus, $(G, S, T)$ is half-integrally feasible if and only if $(G', S^*, T^*)$ is integrally feasible.  Moreover, any integral solution to $(G', S^*, T^*)$ can be easily converted to a half-integral solution for the original problem $(G, S, T)$.

Let $(R, \beta, \gamma)$ be a tree decomposition of $G$ of width $w$.  Observe that $(R, \beta', \gamma')$ defined by $\beta'(r) = \{\{v, v'\}: v \in \beta(r)\}$ and $\gamma'(r) = \{\{v, v'\}: v \in \gamma(r)\}$ yields a tree decomposition of $G'$ of width at most $2w$.  Thus, by Theorem \ref{thm:DP}, we can determine if $(G', S^*=(s_1^*, \dots, s_k^*), T^*=(t_1^*, \dots, t_k^*))$  is integrally feasible and find an solution when it is, in polynomial time assuming $k$ and $w$ are fixed, proving the claim.
\end{proof}
\section{Certificates for large directed tree-width}\label{sec:brambles}

A \emph{bramble} in a directed graph  $G$ is a set $\zB$ of strongly connected subgraphs $B \subseteq G$ such that if $B,B' \in \zB$, then $V(B) \cap V(B') \neq \emptyset$ or there exists edges $e, e' \in E(G)$ such that $e$ links $B$ to $B'$ and $e'$ links $B'$ to $B$. A cover of $\zB$ is a set $X \subseteq V(G)$ such that $V(B) \cap X \neq \emptyset$ for all $B \in \zB$.  The \emph{order} of a bramble is the minimum size of a cover of $\zB$.  The \emph{bramble number}, denoted $bn(G)$, is the maximum order of a bramble in $G$.
The elements of a bramble are called \emph{bags}, and the \emph{size} of a bramble, denoted $|\zB|$, is the number of bags it contains.

The bramble number of a directed graph gives a good approximation of the tree-width, as seen by the following theorem of \cite{Reed} as formulated by \cite{digrid}.

\begin{theorem}[\cite{Reed},\cite{digrid}]
There exist constants $c$, $c'$ such that for all directed graphs $G$, it holds that
$$ bn(G) \le c \cdot tw(G) \le c'\cdot bn(G).$$
\end{theorem}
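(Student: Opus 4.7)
The plan is to prove the two inequalities separately; the first, $bn(G) \le c \cdot tw(G)$, is the easy direction and I aim for $bn(G) \le tw(G)+1$, while the second, $tw(G) \le c' \cdot bn(G)$, is the substantive direction and will rely on results of the style cited from \cite{Reed,digrid}.

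\textbf{Easy direction.} Fix a tree decomposition $(R,\beta,\gamma)$ of $G$ of width $w$ and a bramble $\zB$ in $G$. I will find a node $r^* \in V(R)$ such that $\Gamma(r^*)$ meets every bag of $\zB$; since $|\Gamma(r^*)| \le w+1$ this yields $bn(G) \le w+1$. The key structural observation is that for each edge $e \in E(R)$, any strongly connected subgraph $B \subseteq G$ which misses $\gamma(e)$ lies entirely on one of the two ``sides'' of $R-e$: otherwise, strong connectivity of $B$ supplies a closed walk from $S_e := \bigcup\{\beta(r):r>e\}$ to the complementary side and back inside $G-\gamma(e)$, contradicting $\gamma(e)$-normality of $S_e$. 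Moreover, two bramble elements $B_1,B_2$ which both miss $\gamma(e)$ must lie on the same side, since otherwise the edges witnessing the pairwise-touching condition, concatenated with directed paths inside each $B_i$, again produce such a forbidden walk. So each edge $e$ may be oriented toward the unique side containing every member of $\zB$ that misses $\gamma(e)$ (arbitrary orientation if no such member exists). Any finite oriented tree has a sink $r^*$; for this $r^*$, every $B \in \zB$ either meets some $\gamma(e)$ with $e$ incident to $r^*$, or lies on the $r^*$-side of every edge incident to $r^*$, which forces $V(B) \subseteq \beta(r^*)$. In either case $B$ meets $\Gamma(r^*)$.

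\textbf{Hard direction.} Here I would invoke the standard correspondence among directed tree-width, havens, and brambles. A \emph{haven} of order $k$ assigns to each $Z \subseteq V(G)$ with $|Z|<k$ a strongly connected component of $G-Z$, subject to a monotonicity/consistency axiom. Reed's theorem states that any graph of directed tree-width at least $w$ contains a haven of order $\Omega(w)$; the proof is an obstruction-style argument that attempts to build a bounded-width decomposition greedily and, at each failure, records the large strongly connected region that the tentative separator cannot tame. One then converts a haven of order $k$ into a bramble of order $\Omega(k)$ by choosing a controlled family of small sets $Z$, taking the haven's components at these $Z$ as the bags, and using consistency plus strong connectivity to verify the pairwise-touching condition. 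The main obstacle is precisely this hard direction: the haven construction is a delicate recursion requiring the right directed notion of a well-linked set, and the haven-to-bramble conversion incurs an unavoidable constant loss that must be tracked carefully in order to obtain the absolute constant $c'$ independent of $G$.
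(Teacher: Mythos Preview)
The paper does not prove this theorem at all; it is stated with attribution to \cite{Reed} and \cite{digrid} as a background fact and is never argued in the text. So there is no in-paper proof to compare your attempt against.

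On the merits of your attempt itself: your easy direction is a clean and correct adaptation of the standard bramble/tree-decomposition duality argument to the directed setting, and the use of $\gamma(e)$-normality to force each bramble element missing $\gamma(e)$ onto a single side of $R-e$ is exactly right. Your hard direction, however, is only a sketch---you explicitly defer the haven construction and the haven-to-bramble conversion to the cited literature and flag this as the main obstacle. That is an honest assessment, and since the paper itself simply imports the result by citation, deferring here is consistent with how the theorem is used; but as a self-contained proof your write-up is incomplete on that side.
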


Johnson, Robertson, Seymour, and Thomas showed one can efficiently either find a large bramble in a directed graph or explicitly find a directed tree-decomposition.  Note that the result is not stated algorithmically, but that the algorithm follows from the construction in the proof.  Additionally, they looked at an alternate certificate of large tree-width, namely havens, but a haven of order $2t$ immediately gives a bramble of order $t$ by the definitions.
\begin{theorem}[\cite{JRST}, 3.3]\label{thm:brambleortd}
There exist constants $c_1, c_2$ such that for all $t$ and directed graphs $G$, we can algorithmically find in time $|V(G)|^{c_1}$ either a bramble in $G$ of order $t$ or a tree-decomposition of $G$ of order at most $c_2t$.  Moreover, if we find the bramble, it has at most $|V(G)|^{2t}$ elements.
\end{theorem}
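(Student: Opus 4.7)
The plan is to prove the theorem by following the recursive strategy used in \cite{JRST}: greedily attempt to build a directed tree-decomposition, and whenever the construction gets stuck, extract a haven of order $2t$ from the local obstruction, which then yields a bramble of order $t$. The algorithm itself serves as the witness-extraction procedure.

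First, I would design a recursive splitting procedure on ``pieces'' $(W, Z)$, where $W$ is a $Z$-normal set and $|Z| \le c_2 t$. At each step I try to find a separation $(A, B)$ of $G[W \cup Z]$ of order at most $c_2 t$ such that both $A \setminus B$ and $B \setminus A$ are nonempty and $Z$-normal. The existence of such a splitting separation can be tested in polynomial time with exponent independent of $t$ by iterated directed max-flow computations on pairs of vertices drawn from the interior of $W$: a useful split exists iff there is a pair $(u,v)$ with no $(u,v)$-cut in $G[W\cup Z]$ exceeding $c_2 t - |Z|$ but whose two sides of such a cut are both nonempty. If a splitting separation is found we refine the arborescence, attach two children, and recurse into each side; each recursive call strictly shrinks $|W|$, so there are at most $|V(G)|$ calls and the whole procedure runs in time $|V(G)|^{c_1}$ for an absolute $c_1$. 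If every piece gets resolved, we output the resulting tree-decomposition of width at most $c_2 t$ and are done.

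Otherwise, some piece $(W, Z)$ admits no splitting separation. Out of this obstruction I would build a haven $\beta$ of order $2t$ localized to $W$. For every $Z' \subseteq V(G)$ with $|Z'| < 2t$, choose $c_2$ large enough that $|Z \cup Z'| \le c_2 t$; then the failure of the splitting test applied with guard $Z \cup Z'$ implies that $G - Z'$ has a unique strongly connected component meeting $W$ that carries ``essentially all'' of $W$, and we set $\beta(Z')$ to be this component. Monotonicity $Z_1 \subseteq Z_2 \Rightarrow \beta(Z_2) \subseteq \beta(Z_1)$ follows because any breach of monotonicity would furnish a splitting separation of $(W, Z)$ of order $\le c_2 t$, contradicting the stopping condition. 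Thus $\beta$ is a haven of order $2t$ in the sense of \cite{JRST}.

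Finally, I would convert the haven to a bramble by setting $\zB := \{\beta(Z') : Z' \subseteq V(G),\ |Z'| < 2t\}$. Each member is strongly connected by construction. Given two bags $\beta(Z_1), \beta(Z_2) \in \zB$, the haven axiom applied to $Z_1 \cap Z_2$ forces both $\beta(Z_1)$ and $\beta(Z_2)$ to lie inside the single strongly connected set $\beta(Z_1 \cap Z_2)$, which supplies the edges in both directions required for the touching property. To see that $\zB$ has order at least $t$, take any $X \subseteq V(G)$ with $|X| < t$; then $\beta(X) \in \zB$ is a strongly connected subgraph of $G - X$, hence disjoint from $X$, so $X$ is not a cover. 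Since bags are indexed by subsets of $V(G)$ of size less than $2t$, we get $|\zB| \le |V(G)|^{2t}$. The main obstacle in carrying out this plan rigorously will be phrasing the splitting step so that it respects $Z$-normality at every recursive level and so that its failure simultaneously produces the full monotone family $\beta$ on all small $Z'$, rather than merely a single obstruction; this is where the constants $c_1, c_2$ must be balanced so that the polynomial running time has an exponent independent of $t$.
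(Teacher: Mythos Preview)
The paper does not give its own proof of this statement: it is cited from \cite{JRST} (their (3.3)), with only the remark preceding the theorem that a haven of order $2t$ yields a bramble of order $t$ ``by the definitions.'' So there is nothing in-paper to compare your argument against beyond that one-line conversion.

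Your sketch follows the \cite{JRST} strategy, but the haven-to-bramble step contains a genuine error. You set $\zB = \{\beta(Z') : |Z'| < 2t\}$ and argue that $\beta(Z_1)$ and $\beta(Z_2)$ touch because both lie inside the strongly connected set $\beta(Z_1 \cap Z_2)$. The containment is correct, but it does \emph{not} deliver the touching condition used in this paper (a shared vertex, or an edge in each direction): two disjoint strongly connected subgraphs of a larger strongly connected graph need not be joined by a direct edge. Concretely, on the directed $4$-cycle $a\to b\to c\to d\to a$ one has a haven of order $2$ with $\beta(\emptyset)=\{a,b,c,d\}$, $\beta(\{b\})=\{a\}$, $\beta(\{d\})=\{c\}$; your $\zB$ then contains $\{a\}$ and $\{c\}$, which do not touch. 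The standard fix, and what the paper's remark is alluding to, is to take only $\zB = \{\beta(Z') : |Z'| < t\}$ and argue via the \emph{union}: for $|Z_1|,|Z_2|<t$ we have $|Z_1\cup Z_2|<2t$, so $\beta(Z_1\cup Z_2)$ is defined and monotonicity gives $\emptyset \neq \beta(Z_1\cup Z_2)\subseteq \beta(Z_1)\cap\beta(Z_2)$, a shared vertex. This is precisely why a haven of order $2t$ (not $t$) is needed to produce a bramble of order $t$. Your covering argument for order $\ge t$ still works verbatim, and the size bound $|\zB|\le |V(G)|^{2t}$ holds (indeed $|V(G)|^{t}$) for the restricted family.
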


A long open question of Johnson, Robertson, Seymour, and Thomas \cite{JRST} was whether sufficiently large tree-width in a directed graph would force the presence of a large directed grid minor.  Let $r \ge 2$ be a positive integer.  The \emph{directed $r$-grid $J_r$} is the graph defined as follows.  Let $C_1, \dots, C_r$ be directed cycles of length $2r$.  Let the vertices of $C_i$ be labeled $v_1^i, \dots, v_{2r}^i$ for $1 \le i \le r$.  For $1 \le i \le 2r$, $i$ odd, let $P_i$ be the directed path $v_i^1, v_i^2, \dots, v_i^r$.  For $1 \le i \le 2r$, $i$ even, let $P_i$ be the directed path $v_i^r, v_i^{r-1}, \dots, v_i^1$.  The directed grid $J_r = \bigcup_1^r C_i \cup \bigcup_1^{2r} P_i$.  

In a major recent breakthrough, Kreutzer and Kawarabayashi have confirmed the conjecture of Johnson et al. 
\begin{theorem}[\cite{digrid}]
There is a function $f : \mathbb{N} \rightarrow \mathbb{N}$ such that given any directed graph and any fixed constant $k$, in polynomial time, we can obtain either
\begin{enumerate}
\item  a cylindrical grid of order $k$ as a butterfly minor, or
\item a directed tree decomposition of width at most $f(k)$.
\end{enumerate}
\end{theorem}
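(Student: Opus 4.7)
The plan is to start with the dichotomy in Theorem \ref{thm:brambleortd}: given $G$, we can efficiently find either a tree-decomposition of width at most $c_2 t$ or a bramble of order $t$. Setting $t$ to be a sufficiently large function of $k$, either the second outcome of the theorem is immediate, or we may assume we have a large bramble $\zB$ in $G$ and our task reduces to extracting a cylindrical grid of order $k$ as a butterfly minor of $G$ using $\zB$.

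The first technical step is to distill $\zB$ into a highly well-linked set $A \subseteq V(G)$: a set of vertices such that for every balanced bipartition $(A_1, A_2)$ of $A$ there exist many pairwise vertex-disjoint directed paths from $A_1$ to $A_2$, and symmetrically from $A_2$ to $A_1$. This relies critically on the ``touching in both directions'' property of bramble bags, which allows Menger-type flow rerouting in either direction; an $A$ of the desired size can be taken as a carefully chosen cover of a sub-bramble, using submodularity of cuts to iteratively drop vertices violating well-linkedness. Given such an $A$, the $k$ concentric directed cycles of the cylindrical grid are constructed iteratively: at each stage, use well-linkedness to find a directed cycle through many points of $A$, and then peel off what lies ``inside'' the cycle while preserving a large well-linked residue on the outside to serve as anchors for the next cycle. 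After $k$ rounds one obtains nested directed cycles $C_1, \ldots, C_k$, each meeting many distinguished anchor points.

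The main obstacle, and where I would expect the bulk of the technical work, is the construction of the $2k$ radial transversals alternating in direction that the cylindrical grid requires. In the undirected grid theorem this is handled by Menger-type linking arguments over the cycles, but here one must control the direction of each transversal and ensure that the transversals do not destroy the cycles already built. The strategy is to pre-assign anchor points on each $C_i$ to the roles of ``inbound'' and ``outbound'' endpoints and then, working from outside in, apply directed Menger's theorem in $G$ minus the already-used portions of the structure. If at some stage the required directed connectivity between two groups fails, a small directed separator is exposed; absorbing this separator into a partial decomposition and recursing on the smaller pieces either completes the grid extraction in one side or yields a tree-decomposition of width bounded by $f(k)$. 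Arranging the recursion to terminate in polynomial time with $f(k)$ depending only on $k$ — in particular bounding the depth of the recursion by a function of $k$ — is exactly the delicate ``path system'' analysis carried out in \cite{digrid}, and this is the step where the bound on $f(k)$ blows up.
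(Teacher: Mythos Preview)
This theorem is not proved in the paper at all: it is quoted verbatim as a result of Kawarabayashi and Kreutzer \cite{digrid}, with no accompanying argument. There is therefore no ``paper's own proof'' to compare your proposal against. What you have written is a high-level sketch of how you imagine the argument in \cite{digrid} proceeds, and indeed the broad shape (bramble $\Rightarrow$ well-linked set $\Rightarrow$ system of cycles and transversals, with a win/recurse dichotomy when linkage fails) is in the right spirit.

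That said, as a self-contained proof your proposal has genuine gaps. The step ``peel off what lies inside the cycle while preserving a large well-linked residue on the outside'' is asserted rather than justified; in a directed graph there is no meaningful inside/outside, and maintaining well-linkedness after deleting a cycle is exactly the hard part. Likewise, the recursion you describe for building the radial transversals (``absorb the separator and recurse on the smaller pieces'') needs a careful potential function to terminate in a number of rounds depending only on $k$; you acknowledge this by deferring to the path-system analysis of \cite{digrid}, but that means your proposal is really ``cite \cite{digrid}'' dressed up as an outline. Since the paper itself does precisely that --- cite the result without proof --- the honest thing here is simply to invoke the theorem as stated.
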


For our purposes, we will use brambles when attempting to solve the $\frac{1}{2}$kDDPP.  However, in order to ensure that the paths we find don't use any vertex more than twice, we require the bramble to have \emph{depth} two.  Define the \emph{depth} of a bramble $\zB = \{B_1, \dots, B_t\}$ in a directed graph $G$ to be the $\max_{v\in V(G)} |\{i: v \in V(B_i)\}|$; in other words, a bramble has depth at most $k$ for some positive integer $k$ if no vertex is contained in more than $k$ distinct subgraphs in the bramble.  Note that if $\zB$ has depth $k$ and size $t$, then it has order at least $\lceil t/k\rceil$.

\begin{LE}
For all $t \ge 2$, the directed $t$-grid contains a model of a bramble $\zB$ of size $t$ and depth two.
\end{LE}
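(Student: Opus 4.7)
The plan is to exhibit the bramble explicitly. For each $i \in \{1, \dots, t\}$, let $B_i$ be the subgraph of $J_t$ induced on the vertex set $V(C_i) \cup \{v_{2i-1}^k, v_{2i}^k : i \le k \le t\}$; that is, the $i$-th ring together with the outward halves (from $C_i$ out to $C_t$) of the two paths $P_{2i-1}$ and $P_{2i}$. I claim $\zB = \{B_1, \dots, B_t\}$ is the desired bramble.

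First I would check that each $B_i$ is strongly connected. The cycle $C_i$ is strongly connected on its own, and for every $k$ with $i < k \le t$, the vertices $v_{2i-1}^k$ and $v_{2i}^k$ lie on a directed cycle through $C_i$: starting from $v_{2i-1}^i \in C_i$, follow $P_{2i-1}$ outward to $v_{2i-1}^k$, cross via the edge $v_{2i-1}^k \to v_{2i}^k$ on $C_k$ (both endpoints are in $B_i$, so the edge is present in the induced subgraph), and return along $P_{2i}$ inward to $v_{2i}^i \in C_i$. Next I would verify pairwise touching: for $i < j$ the vertex $v_{2i-1}^j$ lies on $C_j \subseteq B_j$ and also on the outward portion of $P_{2i-1}$ retained in $B_i$ (since $j \ge i$), so $V(B_i) \cap V(B_j) \ne \emptyset$ and no direction-sensitive argument is needed. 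Finally I would bound the depth: a vertex $v_a^b$ belongs to $B_i$ precisely when $i = b$ (via the ring) or $i = \lceil a/2 \rceil$ with $i \le b$ (via a column), which gives at most two distinct values of $i$.

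The main obstacle is the tension between strong connectivity and the depth-two constraint. The most naive guess, $B_i = C_i \cup P_{2i-1} \cup P_{2i}$, fails for $i \ne 1$ because the inward portions of the two paths leave $v_{2i-1}^1$ as a source and $v_{2i}^1$ as a sink inside $B_i$; closing this gap by wrapping around $C_1$ would force every bag to swallow most of $C_1$ and blow up the depth to $\Theta(t)$. Restricting to the outward halves sidesteps this: the oppositely-oriented paths $P_{2i-1}$ and $P_{2i}$, together with the cross-edges they induce on $C_{i+1}, \dots, C_t$, form a return ladder to $C_i$ at no extra cost, and the common endpoint $C_t$ guarantees that any two bags meet on the farther ring.
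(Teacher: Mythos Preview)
Your argument is correct. Each $B_i$ is strongly connected via the directed loop $v_{2i-1}^i \to \cdots \to v_{2i-1}^\ell \to v_{2i}^\ell \to \cdots \to v_{2i}^i$ together with $C_i$; for $i<j$ the vertex $v_{2i-1}^j$ lies in both $B_i$ (on a spoke) and $B_j$ (on the ring); and any $v_a^b$ lies only in $B_b$ and possibly $B_{\lceil a/2\rceil}$, giving depth two.

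The underlying idea matches the paper's: each bag consists of one concentric cycle $C_i$ together with its private pair of radial columns $2i-1,\,2i$. The difference is that you run the spokes \emph{outward} from $C_i$ to $C_t$, whereas the paper, as written, runs them \emph{inward} toward $C_1$ (it defines $P_j(l)$ as the subpath between levels $1$ and $l$ and builds $C_i'$ inside $C_i\cup C_1\cup P_{2i-1}(i)\cup P_{2i}(i)$). Your choice is the one that actually works. With inward spokes, every $C_i'$ for $i<t$ contains all of $C_1$, so the depth at any vertex of $C_1$ is $t-1$ rather than two; moreover $C_t'=C_t$ does not touch $C_i'$ for $i\le t-2$. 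Your outward version avoids both problems, since bags only meet pairwise (the inner bag's spoke meets the outer bag's ring) and no ring is shared among several bags.
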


\begin{proof}
Let the cycles $C_1, \dots, C_{k}$, paths $P_1, \dots, P_{2k}$, and vertex labels $v_i^j$, $1 \le i \le 2k$, $1 \le j \le k$, be as in the definition of the cylindrical grid.  For every $l$, $1 \le l \le k$, and for every $i$, $1 \le i \le 2k$, let $P_i(l)$ be the subpath of $P_i$ with endpoints $v_i^1$ and $v_i^l$.  For $1 \le i \le k-1$, let $C_i'$ be the (unique) cycle in $C_i \cup C_1 \cup P_{2i-1}(i) \cup P_{2i}(i)$ which contains all the vertices $v_1^j$, $1 \le j \le 2k$.  Let $C_k' = C_k$.  The cycles $C_1', \dots, C_k'$ form a bramble of depth two and size $k$, as desired. 
\end{proof}
\section{Finding a bramble of depth two}\label{sec:findingbramble}

In this section, we show that given what we call a sufficiently large \emph{well-linked} set of vertices in a directed graph, we are able to efficiently find a large bramble of depth two.  The argument in many ways follows Diestel et al's proof of Robertson and Seymour's grid theorem (see \cite{diestel1999highly} for the proof) for undirected graphs.  We begin with a collection of disjoint linkages and show that in each of the linkages we can find a sublinkage which are pairwise disjoint.  We will need two classic results in graph theory, namely Ramsey and Menger's theorems.

\begin{theorem}[Menger's theorem~\cite{menger1927allgemeinen}]\label{thm:menger}
Let $G$ be a directed graph and $S,T$ subsets of $V(G)$.
The maximum number of vertex-disjoint $S-T$ paths equals the minimum order of a separation which separates $S$ from $T$.
Moreover, there exists an algorithm to find a maximum set of vertex-disjoint $S-T$ paths and a minimum order separation in time $O(|V(G)|^2)$.
\end{theorem}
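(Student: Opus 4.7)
The plan is to reduce to the classical integer max-flow/min-cut theorem via the standard vertex-splitting construction. Define an auxiliary capacitated digraph $G^\ast$ as follows: replace each $v \in V(G)$ by two vertices $v^-, v^+$ joined by an arc $(v^-, v^+)$ of capacity $1$; replace each arc $(u,v) \in E(G)$ by an arc $(u^+, v^-)$ of capacity $\infty$; add a super-source $s$ with capacity-$\infty$ arcs $(s, v^-)$ for each $v \in S$ and a super-sink $t$ with capacity-$\infty$ arcs $(v^+, t)$ for each $v \in T$.

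The first main step is to observe a bijection between integer $s$-$t$ flows of value $k$ in $G^\ast$ and collections of $k$ pairwise vertex-disjoint $S$-$T$ paths in $G$: any integer flow of value $k$ decomposes into $k$ arc-disjoint $s$-$t$ paths, and the unit capacity on the splitting arcs $(v^-,v^+)$ forces the corresponding paths in $G$ (obtained by contracting each $(v^-,v^+)$ back to $v$ and deleting $s,t$) to be vertex-disjoint; conversely any such family of paths lifts to an integer flow. The second main step is a matching bijection between minimum $s$-$t$ cuts in $G^\ast$ and minimum-order separations of $G$ with $S \subseteq A$, $T \subseteq B$: in any finite-capacity cut only the unit arcs $(v^-, v^+)$ appear, so the cut is encoded by a set $C \subseteq V(G)$; setting $A := C \cup \{v : v^- \text{ is reachable from } s \text{ in } G^\ast \text{ after deleting the cut}\}$ and $B := C \cup \{v : t \text{ is reachable from } v^+ \text{ in the same graph}\}$ yields a separation of $G$ with $A \cap B = C$, and every separation arises this way. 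Chaining the two bijections with the integer max-flow/min-cut theorem applied to $G^\ast$ gives the min-max equality.

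For the algorithmic part I would run an augmenting-path max-flow algorithm on $G^\ast$: each augmentation can be found by BFS in time $O(|V(G^\ast)|+|E(G^\ast)|)=O(|V(G)|^2)$ and, since capacities are integral and the max flow is bounded by $\min(|S|,|T|)$, the algorithm terminates in polynomially many phases; with a blocking-flow/Dinic refinement on this unit-capacity network one obtains the claimed $O(|V(G)|^2)$ bound. The minimum separation is then recovered in a single BFS from $s$ in the residual graph, using the formula for $(A,B)$ above.

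The main subtlety to verify carefully is that the separation $(A,B)$ produced from a min cut really satisfies the paper's definition, namely that there is no arc $(u,v) \in E(G)$ with $u \in A \setminus B$ and $v \in B \setminus A$. Such an arc would correspond to $(u^+, v^-) \in E(G^\ast)$; since $u \notin C$ the arc $(u^-,u^+)$ is not in the cut, so $u^+$ (and hence $v^-$) is reachable from $s$ in the residual graph, forcing $v \in A$ — contradicting $v \notin A$. I expect this directional bookkeeping, together with checking that the bijections between cuts and separations preserve order exactly, to be the only nonroutine part of the argument; everything else reduces to standard max-flow machinery.
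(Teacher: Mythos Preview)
The paper does not prove this statement: Theorem~\ref{thm:menger} is quoted as Menger's classical theorem with a citation to~\cite{menger1927allgemeinen} and is used as a black box throughout. There is therefore no ``paper's own proof'' to compare your proposal against.

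Your reduction to max-flow/min-cut via vertex splitting is the standard textbook argument and is essentially correct. Two small points are worth tightening. First, your formula for $(A,B)$ need not satisfy $A\cup B=V(G)$: after deleting a minimum cut there may be vertices $v$ with $v^-$ unreachable from $s$ and $v^+$ unable to reach $t$; such $v$ land in neither set. The easy fix is to put every vertex not already in $A$ into $B$ (this preserves $A\cap B=C$ and the no-forward-edge condition). Second, the running-time claim is optimistic: on a dense graph the auxiliary network has $\Theta(n^2)$ arcs, so Dinic's algorithm on this unit-capacity network gives $O(m\sqrt{n})=O(n^{2.5})$, not $O(n^2)$; the paper's stated $O(|V(G)|^2)$ bound is itself somewhat loose, so you are not being asked to beat it, but your justification does not actually deliver it either.
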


\begin{theorem}[Ramsey's theorem]\label{thm:ramsey}
Let $r, t$ be positive integers.  For every (improper) two coloring of the edges of the undirected clique $K_{2^{r+t}}$ by red and blue, there exists either a subgraph $K_r$ with every edge is colored red or a subgraph $K_t$ with every edge colored blue.  Moreover, the desired $K_r$ or $K_t$ subgraph can be found in time $O(k^c)$ for some absolute constant $c$.  
\end{theorem}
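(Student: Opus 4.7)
The plan is to prove the existence statement by induction on $r+t$, following the classical Erd\H{o}s--Szekeres argument, and then observe that the induction is constructive and runs in polynomial time. Write $R(r,t)$ for the smallest $n$ such that every red/blue coloring of $K_n$ contains a red $K_r$ or a blue $K_t$. The claim I will prove is the stronger bound $R(r,t) \le \binom{r+t-2}{r-1}$, which is at most $2^{r+t-2} \le 2^{r+t}$ and hence suffices.

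The base cases are $r = 1$ or $t = 1$: a single vertex is trivially a monochromatic $K_1$ of either color. For the inductive step, assume the claim holds for all pairs $(r', t')$ with $r' + t' < r + t$, and consider a red/blue coloring of the edges of a clique on $n \ge \binom{r+t-2}{r-1}$ vertices. Pick any vertex $v$. Its $n-1$ incident edges are partitioned into a red neighborhood $N_R$ and a blue neighborhood $N_B$, and since
\[
n - 1 \;\ge\; \binom{r+t-2}{r-1} - 1 \;\ge\; \binom{r+t-3}{r-2} + \binom{r+t-3}{r-1} - 1,
\]
the pigeonhole principle forces either $|N_R| \ge \binom{r+t-3}{r-2}$ or $|N_B| \ge \binom{r+t-3}{r-1}$. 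In the first case, apply the inductive hypothesis to the clique induced on $N_R$ with parameters $(r-1, t)$: if we find a red $K_{r-1}$ inside $N_R$, adjoining $v$ yields a red $K_r$, and otherwise we already have a blue $K_t$. The second case is symmetric, using parameters $(r, t-1)$ on $N_B$.

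For the algorithmic part, this induction is directly constructive: at each recursive call we spend $O(n)$ time to scan the edges at $v$, partition the remaining vertices into $N_R$ and $N_B$, and decide which side to recurse on, then continue on a subgraph on at most $n-1$ vertices. Since the recursion depth is at most $r+t \le \log_2 k$ where $k = 2^{r+t}$ is the input vertex count, and each level does $O(k^2)$ work reading the relevant edges of the induced subgraph, the total running time is bounded by $O(k^c)$ for an absolute constant $c$ (in fact $c = 2$ suffices with a careful implementation). The only mild subtlety is bookkeeping the nested induced subcliques so that edge lookups remain cheap, but this is routine; the core combinatorial step is the pigeonhole argument on the two color classes at $v$. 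Hence both the existence statement and the efficient extraction of the desired monochromatic clique follow.
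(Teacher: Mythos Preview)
Your proof is correct: it is the standard Erd\H{o}s--Szekeres inductive argument establishing $R(r,t)\le\binom{r+t-2}{r-1}\le 2^{r+t}$, and your observation that the recursion is constructive with $O(k^2)$ total work is valid. Note, however, that the paper does not supply its own proof of this statement; Ramsey's theorem is quoted as a classical result and used as a black box, so there is no paper proof to compare against.
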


We first give two preparatory lemmas before presenting the main result of this section.

\begin{LE}\label{lem:uncrossing1}
Let $G$ be a digraph on $n$ vertices and let $t, k\ge 2$ be positive integers.  Let $T = (10k)2^{2(t+k)}$.  Let $X = \{x_1, x_2, \dots, x_T\}$ and $Y= \{y_1, y_2,\dots, y_T\}$ be two disjoint sets of vertices of order $T$.  Let $\zP = \{P_1, P_2, \dots, P_T\}$ be a linkage from $X$ to $Y$ and $\zQ = \{Q_1, Q_2, \dots, Q_T\}$ a linkage from $Y$ to $X$, each of size $T$.  Assume each $P_i$ has endpoints $x_i$ and $y_i$ and assume that there exists a permutation $\pi$ of $[T]$ such that $Q_i$ has endpoints $y_i$ and $x_{\pi(i)}$.  Then one of the following holds:
\begin{enumerate}
\item there exist $B_1, \dots, B_t$ subgraphs of $G$ forming a bramble of size $t$ and depth two, or
\item there exists a subset $J \subseteq [T]$ with $|J| = k$ such that the subgraphs $P_j \cup Q_j \cup P_{\pi(j)} \cup Q_{\pi(j)} \cup P_{\pi(\pi(j))}$ are pairwise disjoint for all $j \in J$.
\end{enumerate}
Moreover, given $G$, $\zP$, and $\zQ$ in input, we can find either $B_1, \dots, B_t$ or $J$ in time $O(n^c)$ for some absolute constant $c$.
\end{LE}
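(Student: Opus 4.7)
The plan is to apply Ramsey's theorem to the structures $A_j := P_j \cup Q_j \cup P_{\pi(j)} \cup Q_{\pi(j)} \cup P_{\pi(\pi(j))}$ (each a directed walk from $x_j$ to $y_{\pi(\pi(j))}$) and use the pairwise intersection pattern to extract either outcome (2) immediately, or a bramble witnessing outcome (1). First I would discard the small number of indices lying in $\pi$-orbits of length less than $3$ (at most $3k$ of them, easily absorbed by the $10k$ slack in $T$). On the remaining indices, I would color each pair $\{i,j\}$ blue if $V(A_i)\cap V(A_j)=\emptyset$ and red otherwise. Choosing parameters so that a blue clique has size $k$, Theorem \ref{thm:ramsey} returns either a blue $K_k$, yielding the required $J$ for outcome (2), or a red clique $R$ of size arbitrarily large relative to $t$ (this is where the $2^{2(t+k)}$ factor is used with room to spare).

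In the red case I aim to build a bramble of size $t$ and depth $2$ from $R$. The key observation is that each $\pi$-orbit $O$ of length $\ell$ yields a closed directed walk $C_O := \bigcup_{i\in O}(P_i\cup Q_i)$ which is strongly connected, and moreover, since $\zP$ and $\zQ$ are vertex-disjoint linkages, every vertex of $G$ lies in at most one $P_i$ and at most one $Q_j$; consequently, a vertex belongs to at most two distinct $C_O$'s, so depth $2$ is automatic whenever the bags are orbit walks. If the red indices span at least $t$ distinct orbits, I pick one red index per orbit, take the corresponding $C_O$'s as bramble bags, and use that the anchoring $A_i$'s pairwise intersect (by redness) to conclude the $C_O$'s pairwise touch. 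Otherwise, by pigeonhole, a single long orbit $O^\ast$ carries many red indices, and I partition $C_{O^\ast}$ into $t$ consecutive strongly connected sub-walks, each anchored on a red-indexed $A$, using the red crossings between the anchoring $A$'s to force the resulting bags to pairwise share a vertex.

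The algorithmic claim is routine: testing $V(A_i)\cap V(A_j)=\emptyset$ is polynomial, applying Ramsey is polynomial by Theorem \ref{thm:ramsey}, and the bramble assembly reduces to bookkeeping along $\pi$-orbits. The main obstacle I anticipate is the single-long-orbit case: a sub-arc of $C_{O^\ast}$ is only a directed path, not strongly connected on its own, so I must attach internal red-crossing vertices to close each sub-arc into a strongly connected bag while respecting depth $2$. The slack provided by the factor $10k$ (and the ability to throw away constant-depth many intermediate indices around each intersection) is exactly what makes this final bookkeeping go through.
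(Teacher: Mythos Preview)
Your overall plan is close in spirit to the paper's argument --- apply Ramsey to intersection patterns and use $\pi$-orbit walks $C_O$ as bramble bags --- but there are two genuine problems.

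First, the claim that at most $3k$ indices lie in $\pi$-orbits of length $<3$ is simply false: $\pi$ is an arbitrary permutation of $[T]$, so every index could be a fixed point or sit in a $2$-cycle. More importantly, applying Ramsey directly to all the $A_j$'s is contaminated by ``trivial'' red edges: whenever $j' \in \{\pi^{\pm 1}(j), \pi^{\pm 2}(j)\}$ the walks $A_j$ and $A_{j'}$ share a path of $\zP \cup \zQ$ and hence intersect regardless of any genuine crossing. A red clique returned by Ramsey could therefore consist entirely of close-together indices along one long orbit, giving you no usable information. Quantitatively, with $T = (10k)2^{2(t+k)}$ and blue parameter $k$, the red clique you can force has size only about $2t+k$, not ``arbitrarily large relative to $t$'' as you write, so your subsequent pigeonhole into $<t$ orbits yields only a constant number of red indices per orbit.

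Second, and this is the real gap, in the single-long-orbit case a red crossing $v \in A_{j_a} \cap A_{j_b}$ is merely a shared vertex on two forward-directed walks; it supplies no backward route, so it cannot by itself close a sub-arc of $C_{O^\ast}$ into a strongly connected subgraph while keeping depth two. The paper handles this case by a different dichotomy that you are missing: it cuts the long orbit into $2^{k+t}$ disjoint subpaths each of length $\ge 10k$ (in the auxiliary graph $H$), and for each asks whether its image in $G$ has a \emph{self}-intersection between two edges at distance $\ge 6$. If some subpath has no far self-intersection, one reads off $k$ pairwise disjoint length-$5$ pieces whose images are exactly the pairwise-disjoint $A_j$'s needed for outcome (2). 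Otherwise every subpath has a far self-intersection, and that self-intersection closes the directed walk into a strongly connected $H_i$; a second application of Ramsey to the $H_i$'s then finishes. This self-intersection dichotomy is the key idea your plan lacks.
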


\begin{proof}
Define an auxiliary undirected bipartite graph $H$ with vertex set $x_1, x_2, \dots, x_T, y_1, y_2, \dots, y_T$ and edges $x_iy_i$ and $x_{\pi(i)}y_i$ for all $1 \le i \le T$.  Thus, $H$ is the union of two perfect matchings and each component is either a cycle or a single edge.  For any induced subgraph $C$ of $H$, let $\overrightarrow{C} = \bigcup_{i:x_iy_i \in E(C)}P_i \cup \bigcup_{i: x_{\pi(i)}y_i\in E(C)}Q_i$.  Note that if $C$ is a connected component of $H$, then $\overrightarrow{C}$ is a strongly connected subgraph of $G$.  

Assume, as a case, that there are at least $T' = 2^{k+t}$ distinct components $C_1, \dots, C_{T'}$.  By Theorem \ref{thm:ramsey}, there exists $I \subseteq [T']$ such that one of the following holds:
\begin{itemize}
\item $|I| = t$ and the subgraphs $\overrightarrow{C_i}$ pairwise intersect for $i \in I$, or
\item $|I| = k$ and the subgraphs $\overrightarrow{C_i}$ are pairwise disjoint for $i \in I$.
\end{itemize}
Moreover, we can find $I$ in time $O(n^c)$.

In the first case, we claim that $\{\overrightarrow{C_i}:i \in I\}$ form a bramble of depth two; clearly by construction, the sets $\{\overrightarrow{C_i}:i \in I\}$ form a bramble.  To see that it has depth two, observe that $\zQ$ and $\zP$ are both linkages and given the fact that for any vertex $v$ such that $v$ is contained in  $V(\overrightarrow{C_i})$, it must be the case that $v$ is contained in some $P \in \zP$ or $Q \in \zQ$ which is a subpath of $\overrightarrow{C_i}$.   As the set elements of $\zP$ and $\zQ$ forming $\overrightarrow{C_i}$ and $\overrightarrow{C_j}$ are disjoint for all $i \neq j$, at most two $\overrightarrow{C_i}$ and $\overrightarrow{C_j}$ can intersect in the vertex $v$.  Thus the bramble is of depth two.

In the second case, for every $i \in I$, we fix $j = j(i)$ such that $P_j$ is contained in $\overrightarrow{C_i}$.  It follows that $J = \{j(i): i \in I\}$ satisfies outcome 2 in the statement of the lemma.

We conclude that there are at most $2^{k+t}$ distinct components, and thus, there exists a component $C$ of size at least $(10k)\cdot 2^{k+t}$.  Let $P_1, \dots, P_{2^{k+t}}$ be disjoint paths in $C$, each of length at least $10k$.

Assume that there exists an index $i$, $1 \le i \le 2^{k+t}$ such that for all edges $e$ and $f$ in $E(P_i)$ at distance at least six, $\overrightarrow{e} \cap \overrightarrow{f} = \emptyset$.  Fix pairwise disjoint subpaths $P_1', \dots, P_k'$ of $P$, such that 
\begin{itemize}
\item for $1 \le i \le k$, the path $P_i'$ has length five and contains exactly three edges $e$ such that $\overrightarrow{e} \in \zP$;
\item for $1 \le i < i' \le k$, $P_i'$ and $P_{i'}'$ are at distance at least five.  
\end{itemize}
By our assumption on $P_i$, the fact that each $P_i'$ starts and ends with an edge corresponding to a path in $\zP$, and the fact that the elements of $\zP$ are pairwise disjoint, it follows that for all $1 \le i < i' \le k$, $\overrightarrow{P_i'} \cap \overrightarrow{P_{i'}'} = \emptyset$, as in the outcome 2 of the lemma.  

We conclude that for all $1 \le i \le 2^{k+t}$, the subgraph $P_i$ has two edges at distance at least six such that their corresponding paths in $\zP \cup \zQ$ intersect.  It follows that there exists a five edge path $P_i'$ of $P_i$ containing three edges $e$ such that $\overrightarrow{e} \in \zP$ and a strongly connected subgraph ${H_i}$ of $G$ containing $\overrightarrow{P_i'}$.  By the same Ramsey argument as above applied to the subgraphs $H_1, \dots, H_{2^{k+t}}$, we see that one of the desired outcomes holds for $G$.
\end{proof}

A subset $X \subseteq V(G)$ of vertices of a directed graph $G$ is \emph{well-linked} if for any pair of subsets $U_1, U_2 \subseteq X$ with $|U_1| = |U_2|$, there exists a directed $U_1$ to $U_2$ linkage of order $|U_1|$.
 
\begin{LE}\label{lem:uncross2}
Let $X$ be a well-linked set in a directed graph $G$ on $n$ vertices.  Let $k, t \ge 2$ be positive integers.  Let $X_1, X_2, Y_1, Y_2 \subseteq X$ be pairwise disjoint subsets of $X$.  Let $\zP$ be a linkage of order $2^k2^T$ from $X_1$ to $X_2$ where $T = (10t)2^{4t}$.  Let $\zR$ be a linkage of order $2^k2^T$ from $Y_1$ to $Y_2$.  Then one of the following holds:
\begin{enumerate}
\item there exist $\zP' \subset \zP$ and $\zR' \subset \zR$ with $|\zP'| = |\zR'| = k$ such that for all $P \in \zP'$ and $R \in \zR'$, it holds that $P \cap R = \emptyset$, or 
\item there exists $B_1, \dots, B_t$ forming a bramble of size $t$ and depth two.
\end{enumerate}
Moreover, we can find the linkages satisfying outcome 1 or the bramble in 2 in time $O(n^c)$ for some absolute constant $c$.
\end{LE}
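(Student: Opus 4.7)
The plan is to reduce the lemma to an application of Lemma \ref{lem:uncrossing1}. Using the well-linkedness of $X$, I will construct an auxiliary linkage $\zQ$ from the $X_2$-endpoints of $\zP$ back to the $X_1$-endpoints of $\zP$ in which every path contains exactly one element of $\zR$ as an internal subwalk. Applying Lemma \ref{lem:uncrossing1} to $\zP$ and $\zQ$ with parameters $k' = 2k$ and $t' = t$, the first outcome of that lemma produces the depth-two bramble of size $t$ required by outcome~2 of the present lemma, while the second outcome produces an index set $J$ with $|J| = 2k$ for which the five-fold unions $P_j \cup Q_j \cup P_{\pi(j)} \cup Q_{\pi(j)} \cup P_{\pi(\pi(j))}$, $j \in J$, are pairwise vertex-disjoint.

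To build $\zQ$, two applications of Menger's theorem (Theorem \ref{thm:menger}) inside the well-linked set $X$ yield a linkage $\mathcal{L}_1$ from the $X_2$-endpoints of $\zP$ to the $Y_1$-endpoints of $\zR$ and a linkage $\mathcal{L}_2$ from the $Y_2$-endpoints of $\zR$ to the $X_1$-endpoints of $\zP$, each of size $2^k 2^T$ (using that $X_1, X_2, Y_1, Y_2$ are pairwise disjoint subsets of $X$). Matching endpoints appropriately, every compatible concatenation $L_1 \cdot R \cdot L_2$ is a walk from an $X_2$-endpoint of $\zP$ to an $X_1$-endpoint of $\zP$ passing through exactly one $R \in \zR$; I write $R_{\sigma(j)}$ for the element of $\zR$ marked in this way by the $j$th resulting walk of $\zQ$.

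Given outcome~2 of Lemma \ref{lem:uncrossing1}, I partition $J$ into disjoint halves $J_P$ and $J_R$ with $|J_P| = |J_R| = k$, and set $\zP' = \{P_j : j \in J_P\}$ and $\zR' = \{R_{\sigma(j)} : j \in J_R\}$. For every $j \in J_P$ and $j' \in J_R$, the path $P_j$ lies in the five-union indexed by $j$ while $R_{\sigma(j')}$ lies (as a subwalk of $Q_{j'}$) in the five-union indexed by $j'$; since these two five-unions are vertex-disjoint, $P_j \cap R_{\sigma(j')} = \emptyset$, which is outcome~1 of the lemma.

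The principal obstacle is passing from concatenated walks to an honest linkage $\zQ$ of size at least $(10 \cdot 2k)\, 2^{2(t + 2k)}$, the size required by Lemma \ref{lem:uncrossing1}. The three linkages $\mathcal{L}_1$, $\zR$, and $\mathcal{L}_2$ are each internally disjoint in isolation, but paths from different linkages may share interior vertices, so naive concatenation can collide. A Ramsey-type pruning argument (Theorem \ref{thm:ramsey}) on the intersection pattern among the three linkages, in the spirit of the proof of Lemma \ref{lem:uncrossing1}, extracts a sufficiently large sub-collection of mutually internally disjoint walks, shortcutted to simple paths if needed. The doubly exponential hypothesis $|\zP| = |\zR| = 2^k 2^T$ with $T = (10t) 2^{4t}$ is calibrated precisely to absorb this pruning loss and leave a linkage of the required size.
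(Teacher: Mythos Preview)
Your single-application reduction to Lemma~\ref{lem:uncrossing1} has a real gap at exactly the spot you flag as the ``principal obstacle,'' and the Ramsey hand-wave does not close it. Two-colouring pairs of concatenated walks by intersect/disjoint and invoking Theorem~\ref{thm:ramsey} yields either a large pairwise-disjoint family or a large pairwise-intersecting one; in the intersecting case you have only directed $X_2$--$X_1$ walks, not strongly connected subgraphs, so no bramble drops out and nothing in your outline handles this branch. Even if it could be dispatched, shortcutting each walk $L_1^{(j)}\cdot R_{\sigma(j)}\cdot L_2^{(j)}$ to a simple path may discard vertices of $R_{\sigma(j)}$, destroying the inclusion $R_{\sigma(j)}\subseteq Q_j$ on which your final step (``$R_{\sigma(j')}$ lies as a subwalk of $Q_{j'}$ in the five-union indexed by $j'$'') depends. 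The arithmetic is also off: extracting $(20k)2^{2(t+2k)}$ pairwise-disjoint walks via Ramsey would require on the order of $2^{(20k)2^{2t+4k}}$ walks to start with, and for $k$ comparable to or larger than $t$ this exceeds the available $2^{k+(10t)2^{4t}}$.

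The paper uses a different architecture that avoids all three issues. It applies Ramsey \emph{first}, directly to the bipartite intersection pattern between $\zP$ and $\zR$: either a $k$-by-$k$ block is entirely disjoint, giving outcome~1 immediately, or there exist $\zP',\zR'$ of size $T$ with every $P\in\zP'$ meeting every $R\in\zR'$. In that all-intersecting case well-linkedness supplies honest, unconcatenated return linkages $\zQ^X$ for $\zP'$ and $\zQ^Y$ for $\zR'$, and Lemma~\ref{lem:uncrossing1} is applied \emph{twice}, once to each side, with both of its parameters set to $t$ (this is why $T=(10t)2^{4t}$). If either call returns a bramble we are done; otherwise the disjoint five-unions on the $\zP'$ side and on the $\zR'$ side are stitched together, and the all-intersecting property guarantees the resulting strongly connected subgraphs pairwise touch, yielding the depth-two bramble. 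In particular, in the paper's route outcome~1 of the present lemma is produced only by the initial Ramsey step, never from Lemma~\ref{lem:uncrossing1}.
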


\begin{proof}
By Theorem \ref{thm:ramsey}, we may assume that there exist $\zP' \subseteq \zP$ and $\zR' \subseteq \zR$, each of order $T$ such that every element of $\zP'$ intersects every element of $\zR'$.  By the definition of well-linked set, there exists a linkage $\zQ^X$ from $X_2 \cap V(\zP')$ to $X_1 \cap V(\zP')$ and similarly, a linkage $\zQ^Y$ from $Y_2 \cap V(\zR')$ to $Y_1 \cap V(\zR')$.  By Theorem \ref{thm:menger}, we can find the linkages $\zQ^X$ and $\zQ^Y$ in time $O(n^2)$.  Label the elements of $\zR'$ as $R_1, \dots, R_T$ and the elements of $\zP'$ as $P_1, \dots, P_T$.  Let $\pi_X$ and $\pi_Y$ be two permutations of $[T]$ such that we can label the elements of $\zQ^X$ and $\zQ^Y$ as $Q_1^X, \dots, Q_T^X$ and $Q_1^Y, \dots, Q_T^Y$ such that $Q_i^X$ has a common endpoint with $P_i$ in $X_2$ and a common endpoint with $P_{\pi_X(i)}$ in $X_1$.  Similarly, $Q_i^Y$ has a common endpoint with $R_i$ in $Y_2$ and a common endpoint with $R_{\pi_Y(i)}$ in $Y_1$.  

Apply Lemma \ref{lem:uncrossing1} to the linkages $P_1, \dots, P_T$ and $Q_1^X, \dots, Q_T^X$.  We may assume that we get outcome 2 in the lemma.  Without loss of generality, we may assume that $\pi_X(i) = i+1$ for $1 \le i \le 3k-1$ and that the subgraphs $P_{3j+1} \cup Q_{3j+1}^X \cup P_{3j+2} \cup Q_{3j+2}^X \cup P_{3j+3}$ are pairwise disjoint for $0 \le j \le k-1$.  Similarly, by applying Lemma \ref{lem:uncrossing1} to $R_1, \dots, R_T$ and $Q_1^Y, \dots, Q_T^Y$ that $\pi_Y(i) = i+1$ for $1 \le i \le 3k-1$ and that the subgraphs $R_{3j+1} \cup Q_{3j+1}^Y \cup R_{3j+2} \cup Q_{3j+2}^Y \cup R_{3j+3}$ are pairwise disjoint for $0 \le j \le k-1$.

Since for all $0 \le j \le k-1$, the paths $P_{3j+3}$ intersects $R_{3j+1}$ and $R_{3j+3}$ intersects $P_{3j+1}$, we conclude that the subgraph
$$P_{3j+1} \cup Q_{3j+1} \cup P_{3j+2} \cup Q_{3j+2} \cup P_{3j+3} \cup R_{3j+1} \cup Q_{3j+1}^Y \cup R_{3j+2} \cup Q_{3j+2}^Y \cup R_{3j+3}
$$
contains a strongly connected subgraph $B_j$ which contains both $P_{3j+2}$ and $R_{3j+2}$.  Since every $P_i$ intersects every $R_{i'}$, we have that $\{B_0, \dots, B_{k-1}\}$ forms a bramble of size $k$ and depth two, as required.
\end{proof}

We now show the main result of the section which is that given a sufficiently large well-linked set, we can efficiently find a large bramble of depth two.

\begin{theorem}\label{thm:findbramble}
There exists a function $f$ which satisfies the following.  Let $G$ be a directed graph on $n$ vertices and $t \ge 1$ a positive integer.  Let $P$ be a directed path and $X \subseteq V(P)$ a well-linked set with 
$|X| \ge f(t).$
Then $G$ contains a bramble $\zB = B_1, \dots, B_t$ of depth two.  Moreover, given $G$, $P$, and $X$ in input, we can find $\zB$ in time $O(n^c)$ for some absolute constant $c$.
\end{theorem}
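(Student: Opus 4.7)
The plan is to apply Lemma \ref{lem:uncross2} iteratively to linkages extracted from the well-linked set $X$, using the directed path $P$ as a structural scaffold. Each application of the lemma either directly yields the required bramble of depth two (in which case we terminate) or produces pairwise-disjoint sublinkages that we add to a stockpile for the next round. The function $f(t)$ is chosen as a tower-type function (of order roughly $2^{(10t)2^{4t}}$, with enough multiplicative slack to accommodate $O(t)$ rounds) so that we never exhaust the available well-linked material during the iteration.

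For each round, I would reserve four pairwise disjoint consecutive blocks $X_1, X_2, Y_1, Y_2 \subseteq X$ along $P$, each of size at least $2^k \cdot 2^T$ with $T=(10t)2^{4t}$, and then invoke well-linkedness (realized algorithmically via Theorem \ref{thm:menger}) to construct linkages $\zP$ from $X_1$ to $X_2$ and $\zR$ from $Y_1$ to $Y_2$ of the prescribed order. Applying Lemma \ref{lem:uncross2} to $\zP$ and $\zR$ either ends the argument via its outcome 2, or produces size-$k$ sublinkages $\zP' \subseteq \zP$ and $\zR' \subseteq \zR$ that are pairwise disjoint across the two families. In the latter case, we record the new disjoint linkages and begin the next round on a fresh portion of $X$ that has not yet been used, reserving four new consecutive blocks of comparable size.

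If the iteration never finds a bramble directly, then after $O(t)$ rounds we have accumulated $\Omega(t)$ pairwise disjoint linkages between carefully chosen subsets of $X$, each lying in a region internally disjoint from the others. Combining each such linkage with an appropriate subpath of $P$ (joining one endpoint to the other by running forward along $P$, and closing up through the linkage in the reverse direction) produces strongly connected subgraphs of $G$ that pairwise touch through their use of $P$. The main obstacle, and the place where most of the care is required, is enforcing the depth-two constraint during this final assembly: every candidate cycle uses a portion of $P$, and naive choices place many cycles through the same $P$-vertex. The disjointness guarantee from outcome 1 of Lemma \ref{lem:uncross2} is precisely what lets us control this, since the stockpiled sublinkages are mutually internally disjoint; by assigning the subpaths of $P$ in a nested or laminar fashion, we arrange that each vertex of $V(P)$ lies in the interior of at most two of the cycles, yielding a bramble of size $t$ and depth two. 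The algorithmic runtime of $O(n^c)$ follows from bounding the number of rounds by a function of $t$ alone and observing that each round consists of a constant number of Menger computations (Theorem \ref{thm:menger}) and one invocation of Lemma \ref{lem:uncross2}, both of which run in time polynomial in $n$.
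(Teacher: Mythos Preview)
Your final assembly step contains a genuine gap that cannot be repaired within the framework you describe. You propose to form strongly connected subgraphs by closing each stockpiled linkage with a subpath of $P$, and to make them pairwise touch ``through their use of $P$'' while controlling depth by choosing the $P$-segments in a ``nested or laminar fashion.'' These two requirements are incompatible. In any laminar family of intervals on a path, two intervals are either nested or disjoint. If all $t$ intervals are mutually nested, every point of the innermost one lies in all $t$ cycles, so the depth is $t$, not two. If some pair is disjoint, those two cycles share no $P$-vertex, and since you have arranged the linkages to be pairwise disjoint, they do not touch at all, so the family is not a bramble. No arrangement of intervals on a single path can give $t$ pairwise-intersecting sets with every point in at most two of them once $t\ge 3$.

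There is also a secondary gap: Lemma~\ref{lem:uncross2} only makes the two linkages of a \emph{single} invocation disjoint from one another; it says nothing about linkages produced in different rounds, so your claim that the accumulated linkages are ``pairwise disjoint'' is unjustified. The paper handles both issues simultaneously with a two-dimensional construction you are missing. It lays out $T^2$ disjoint subpaths of $P$ as a $T\times T$ grid (with $T=2^{2t}$), finds a linkage $\zQ(i,j)$ between every ordered pair of cells, and then applies Lemma~\ref{lem:uncross2} to \emph{every} pair of these linkages (roughly $T^8$ applications) so that a single surviving path $Q(i,j)$ can be chosen from each, all pairwise disjoint. From these it builds ``column'' cycles $C_i$ and ``row'' cycles $R_j$; the bramble bags are $C_i\cup R_i$. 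Touching comes from the grid structure (row $j$ and column $i$ meet at cell $(i,j)$), and depth two comes from the global pairwise disjointness of the $Q(i,j)$ together with the disjointness of the $P$-subpaths, with a final Ramsey step to handle the case that the $C_i$ (or $R_j$) already pairwise intersect. The grid is the essential idea; a one-dimensional scaffold along $P$ cannot substitute for it.
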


\begin{proof}
For a given $k \ge 1$, let $f_t(k) = 2^k2^{ (10t)2^{4t}}$.  We will use the notation $f_t^l(k)$ for the function $f_t$ iterated $l$ times, beginning with input $k$, i.e.~$f_t(f_t(f_t(\dots(f_t(k))\dots)))$.  Let $T = 2^{2t}$.   Fix pairwise disjoint subsets $X_i^{in}, X_i^{out}$ for $1 \le i \le T^2$ in $X$ such that there exist subpaths $P_1, \dots, P_{T^2}$ of $P$ which satisfy the following:
\begin{itemize}
\item for all $1 \le i < j \le T^2$, $P_i \cap P_j = \emptyset$; 
\item for all $i$, $X_i^{in} \cup X_i^{out} \subseteq V(P_i)$ and traversing the directed path $P_i$, the vertices of $X_i^{in}$ occur before the vertices of $X_i^{out}$.  
\end{itemize}
Moreover, we pick $X_i^{in}, X_i^{out}$, $1 \le i \le T^2$ such that 
$$|X_i^{in}|, |X_i^{out}| \ge f_t^{T^8}(2).$$
By assuming that the function $f$ in the statement of the theorem satisfies $f(t) \ge T^4 f_t^{T^8}(2)$, we see that such $X_i^{in}, X_i^{out}$ exist.

For all $1 \le i \le T^2$, $1 \le j \le T^2$, $i \neq j$, fix a directed linkage $\zQ(i, j)$ from $X_i^{out}$ to $X_j^{in}$ of order $f_t^{T^8}(1)$.  By Theorem \ref{thm:menger}, we can find such linkages $\zQ(i, j)$ in time $O(T^4n^2)$.  Fix a enumeration of $A := \{((i, j),(i', j')): 1 \le i, i', j, j' \le T^2, i \neq j, i' \neq j'\}$ and let $\alpha = |A| = T^8 - 2T^6 + T^4$.  For $0 \le l \le \alpha$, we define linkages $\zQ(i, j)^l \subseteq \zQ(i, j)$ of order $f_t^{T^8-l}(1)$ as follows.  Let $\zQ(i, j)^0 = \zQ(i, j)$.  For $l = 1, \dots, \alpha$, let $((i, j), (i', j'))$ be the $l^{th}$ pair in $A$.  Apply Lemma \ref{lem:uncross2} to the linkages $\zQ(i, j)^{l-1}$ and $\zQ(i', j')^{l-1}$.  We may assume that there exist disjoint sublinkages of $\zQ(i, j)^{l-1}$ and $\zQ(i', j')^{l-1}$, each of size $f_t^{T^8 - l}(1)$.  Call them $\zQ(i, j)^{l}$ and $\zQ(i', j')^l$, respectively.  For $(i'', j'')$ distinct from $(i, j)$ and $(i', j')$, fix $\zQ(i'', j'')^l$ to be an arbitrary subset of $\zQ(i'', j'')^{l-1}$ of order $f_t^{T^8 - l}(1)$.  We fix $Q(i, j)$ to be an element of the linkage $\zQ(i, j)^\alpha$.  By construction, the paths $Q(i, j)$ and $Q(i', j')$ are disjoint if $(i, j)\neq (i', j')$.  

Define strongly connected subgraphs $C_i$ and $R_i$ for $1 \le i \le T$ as follows.  Fix $i$, $1 \le i \le T$.  The subgraph $C_i = \bigcup_{j = 0}^{T-1} Q(i+jT, i + (j+1)T)$ along with the subpath of $P_{i + jT}$ linking endpoints of $Q(i +(j-1)T, i+jT)$ and $Q(i + jT, i + (j+1)T)$ for $0 \le j \le T-1$ where the values are taken modulo $T^2$.  Similarly, we define $R_i = \bigcup_{j = 1}^{T-1} Q(iT + j, iT + j + 1) \cup Q(iT +T, iT + 1)$ along with the subpaths of $P_{iT + j}$ linking the endpoints of $Q(iT + j-1, iT + j)$ and $Q(iT + j, iT + j + 1)$ for $2 \le j \le T-1$ along with the analogous subpaths of $P_{iT + 1}$ and $P_{iT + T}$.  If we think of the paths $P_i$ laid out in a $T \times T$ grid, the subgraphs $C_j$ are the natural strongly connected graphs formed by following the $Q(i', j')$ through a column of the grid, and the $R_j$ are the strongly connected graphs formed by the rows.

Every vertex in one of the subgraphs $C_i$ is either in a path $Q(i', j')$ or $P_{l'}$; moreover, the subset of $P_{l'}$ contained in $C_i$ are disjoint from those contained in $C_{i''}$ for $i' \neq i''$.  Thus, it is possible that two $C_i$ and $C_{i'}$ intersect in a vertex, but not for three distinct $C_i$, $C_{i'}$ and $C_{i''}$ to all intersect in a common vertex.  

By Theorem \ref{thm:ramsey}, there exists $I \subseteq [T]$ of size $t$ such that for $i, i' \in I$, either $C_i$ intersect or they are pairwise disjoint.  By the above observation, if they are pairwise intersecting, then $\{C_i: i \in I\}$ forms a bramble of size $t$ and depth two.  Thus, we may assume that $C_i \cap C_{i'} = \emptyset$ for all distinct $i, i' \in I$.  Similarly, there exists $J \subseteq [T]$, $|J| = t$ such that for all distinct $j, j' \in J$, $R_j \cap R_{j'} = \emptyset$.  Without loss of generality, assume that $I = J = [t]$.  The set $\{C_i \cup R_i : i \in [t]\}$ then forms a bramble of size $t$ and depth two, completing the proof.
\end{proof}

\section{Linking in a bramble of depth two}\label{sec:linking}
The main result of this section is the following which shows that if we have a sufficiently large bramble of depth two, we can use it to efficiently resolve a given instance of the $\frac12$kDDPP under a modest assumption on the connectivity of the graph.
\begin{theorem}\label{thm:main}
For all $k \ge 1$, there exists a positive integer $t$ such that if $G$ is a $(36k^3+2k)$-strongly connected directed graph, and $G$ contains a bramble $\zB$ of depth two and size $t$, then for every $k$-linkage problem instance $(G,S, T)$ is half-integrally feasible.  Moreover, given $(G, S, T)$ and the bags of $\zB$, we can find a solution in time $O(k^4n^2)$.
\end{theorem}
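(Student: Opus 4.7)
The plan is to build each required $s_i \to t_i$ path out of three pieces: a \emph{head} $P_i$ going from $s_i$ to a bag of a suitable sub-bramble $\zB' \subseteq \zB$, internally disjoint from $\zB'$; a \emph{body} that stays inside $\zB'$; and a \emph{tail} $Q_i$ from a bag of $\zB'$ to $t_i$, again internally disjoint from $\zB'$. The bramble $\zB'$ will inherit depth two, so walking through its bags uses each vertex at most twice; combined with the pairwise disjointness of the head linkage $\zP = (P_1,\dots,P_k)$ and of the tail linkage $\zQ = (Q_1,\dots,Q_k)$ (and their internal disjointness from $\zB'$), this yields a half-integral solution to $(G,S,T)$.

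First I would build $\zP$ and $\zQ$. Using the $2k$-strong connectivity portion of the hypothesis and Menger's theorem (Theorem~\ref{thm:menger}), I obtain $k$ pairwise vertex-disjoint paths from $S$ to $V(\zB)$, shortened so that each meets $V(\zB)$ only at its last vertex, and analogously a linkage from $V(\zB)$ to $T$ after deleting the internal vertices of the first linkage. By the depth-two property each endpoint lies in at most two bags, so $\zP \cup \zQ$ marks at most $4k$ ``occupied'' bags of $\zB$ and has no further internal vertex in $V(\zB)$. Choosing the constant $t$ in the statement large enough (polynomial in $k$), deletion of these occupied bags leaves an uninjured sub-bramble $\zB_0$ of depth two and size $\gg k$. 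Inside $\zB_0$ together with the occupied bags, I then want to extract a sub-bramble $\zB'$ and choices of bags $B_1,\dots,B_k,B_1^\star,\dots,B_k^\star \in \zB'$, all pairwise distinct, such that the endpoint of $P_i$ lies in $B_i$ and the start of $Q_i$ lies in $B_i^\star$.

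The main obstacle, and where the surplus $36k^3$ of connectivity is spent, is coordinating the head and tail linkages so that this simultaneous choice is possible. In the directed setting there is no easy control over how minimum $S$-to-$\zB'$ separations cross minimum $\zB'$-to-$T$ separations, so the endpoint bags of $\zP$ and the start bags of $\zQ$ may coincide or cluster, and distinct bag representatives for the $2k$ endpoints need not exist for the first natural choice. The plan is an uncrossing/rerouting argument in the spirit of Lemmas~\ref{lem:uncrossing1} and~\ref{lem:uncross2}: use the excess $36k^3$ connectivity units to keep, for each $s_i$ and each $t_j$, a polynomially large family of alternative short head/tail paths into $V(\zB)$, then apply a Hall-type / pigeonhole argument to pick one head and one tail path per index with all $2k$ resulting bags distinct. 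The cubic-in-$k$ bound on connectivity is natural here, since for each of the $O(k^2)$ source--target index pairs we may have to absorb $O(k)$ collisions at occupied bags.

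Once $\zB'$ and the distinct bags $B_i, B_i^\star$ are fixed, the body of $s_i \to t_i$ is built inside $\zB'$: from $B_i$ I walk to $B_i^\star$ through one dedicated pair of intermediary bags of $\zB'$, using that any two bags in a bramble either share a vertex or are joined by edges in both directions, and that each bag is strongly connected. Dedicating a distinct pair of intermediary bags to each index $i$ requires $|\zB'| = \Omega(k)$ (so $t = \Omega(k)$ suffices once the uncrossing argument is absorbed) and guarantees that each bag, hence each vertex by depth two, is visited by at most two of the completed $s_i \to t_i$ paths. The running time comes from $O(k)$ maxflow computations at $O(n^2)$ each (Theorem~\ref{thm:menger}) together with $O(k^3)$ combinatorial selection work, giving the claimed $O(k^4 n^2)$.
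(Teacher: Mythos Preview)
Your three-part plan (head/body/tail) and the use of the depth-two property to route the bodies inside a sub-bramble $\zB'$ are exactly the skeleton of the paper's argument (Lemma~\ref{lem:linkup} plus Lemma~\ref{lem:insideclique}). But the way you propose to produce the head and tail linkages has a genuine gap.

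You ask for head paths that meet $V(\zB)$ only at their last vertex, i.e.\ that are internally disjoint from the \emph{entire} bramble. This is far too strong. Nothing forbids $V(\zB)$ from covering all of $V(G)\setminus(S\cup T)$; in that case every ``shortened'' head path is a single edge $(s_i,v_i)$, and all the $v_i$'s could lie in the intersection of just two bags $B_1\cap B_2$ (remember bags can be large, and high strong connectivity only guarantees each $s_i$ has many out-neighbours, not that those neighbours are spread across many bags). Then no rerouting among ``alternative short head paths into $V(\zB)$'' can produce $k>2$ endpoints in distinct bags: there simply aren't more than two bags available at the boundary. Your appeal to Lemmas~\ref{lem:uncrossing1} and~\ref{lem:uncross2} is also misplaced---those lemmas are about building depth-two brambles from linkages, not about routing terminals into a bramble.

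The paper resolves this by \emph{not} avoiding $V(\zB)$. It doubles vertices lying in two bags, contracts each bag of a candidate sub-bramble to a single vertex forming a clique $K_{\zB'}$, and then applies Menger from $S$ to $K_{\zB'}$ in this contracted graph $G(\zB';\zB)$. A path in the contracted graph uncontracts to a path in $G$ that may traverse many bags, but each at most once; combined with depth two this gives the required half-integrality (properties A3--A7 of Lemma~\ref{lem:linkup}). The real difficulty---and where the $36k^3$ is spent---is that contraction can destroy connectivity: $S$ need not be $k$-connected to $K_{\zB'}$ in $G(\zB';\zB)$ even when $G$ is highly connected. Lemma~\ref{lem:linkoneside} handles this with an iterative procedure: repeatedly take a minimum $S$--$K$ separation of order $<\alpha$, throw out the bags it hits, and argue via a submodularity/minimality computation (Claims~\ref{clm:klink} and~\ref{clm:goodcuts}) that within $O(k\alpha)$ rounds one must reach a sub-bramble that \emph{is} $\alpha$-connected from $S$. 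This iterative uncrossing of separations in the contracted graph is the missing idea in your outline.

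A minor point: for the bodies you propose routing $B_i\to B_i^\star$ through a dedicated \emph{pair} of intermediary bags. This is unnecessary and would only increase congestion; Lemma~\ref{lem:insideclique} just uses that $B_i^s$ and $B_i^t$ touch directly (bramble definition), so $P_i\subseteq B_i^s\cup B_i^t$ with no intermediaries.
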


We begin with some notation.  Recall that the doubling of a vertex in a directed graph was defined in Section \ref{sec:tw}.  
To \emph{contract} a set of vertices $U$ inducing a strongly connected subgraph of $G$ is to delete $U$ and create a new vertex $v$, then add edges $(w,v)$ for all edges $(w,u)\in E(G)$ with $u\in U,w\notin U$ and edges $(v,w)$ for all edges $(v,u)\in E(G)$ with $u\in U, w\notin U$.

Let $\zB$ be a depth two bramble in a directed graph $G$ and $\zB_1\subseteq \zB$.
Define the graph $G(\zB_1;\zB)$ as follows:
First, let $G'$ be the graph obtained from $G$ by doubling every vertex belonging to two bags of $\zB$ and to at least one bag of $\zB_1$. 
For each such vertex $v$, denote its double by $v'$. 
Let $\zB'$ be the collection of $|\zB_1|$ subsets of $V(G')$ obtained from $\zB_1$ by replacing each vertex $v$ belonging to a bag of $\zB$ with $v'$ in exactly one of the bags it belongs to.
Thus, the elements of $\zB'$ are pairwise disjoint and each induces a strongly connected subgraph of $G'$, so $\zB'$ is a depth 1 bramble in $G'$.
Let $G(\zB_1;\zB)$ be the graph obtained from $G'$ by contracting each element of $\zB'$. 
Denote by $K_{\zB_1}$ the set of contracted vertices in $G(\zB_1;\zB)$;
note that the vertices of $K_{\zB_1}$ form a bidirected clique.
Observe that every double of a vertex of $G'$ gets contracted, so $V(G(\zB_1;\zB)) \setminus K_{\zB_1} \subseteq V(G)$.
For a vertex $v\in K_{\zB_1}$, we write $\im(v)$ for the bag of $\zB_1$ corresponding to the vertices contracted to $v$.
We stress that each $\im(v)$ is a bag of $\zB_1$; in particular $\im(v)\subseteq V(G)$.

Let $S,T$ be disjoint subsets of the vertices of a directed graph $G$.
A separation $(A,B)$ \emph{separates} $S$ from $T$ if $S\subseteq A$ and $T\subseteq B$.
The separation $(A,B)$ \emph{properly separates} $S$ from $T$ if $S\setminus B$ and $T\setminus A$ are both nonempty.
For a positive integer $\alpha$, we say $S$ is \emph{$\alpha$-connected} to $T$ if every separation separating $S$ from $T$ has order at least $\alpha$.

Let $G$ be a directed graph, and $\zB'\subseteq \zB$ be brambles of depth two.
Let $X\subseteq V(G(\zB';\zB))\setminus K_{\zB'}$.
We say a $X-K_{\zB'}$ or $K_{\zB'}-X$ linkage $P_1,\dots,P_{|X|}$ is \emph{$\zB$-minimal} if 
none of the paths contains internally a vertex in $K_{\zB'}$ or in $\im(v)$ for some $v\in K_{\zB}\setminus \cup_i P_i$.

We now give a quick outline of how the proof will proceed.  
Let us denote $S = (s_1, \dots, s_k)$ and $T = (t_1, \dots, t_k)$.
Our approach to proving half-integral feasability is in two steps.
We find three sets of paths, one set of $k$ paths linking $S$ to the bramble $\zB$, another set linking $\zB$ to $T$, and a third linking the appropriate ends of paths in the first two sets to each other inside of $\zB$.
To get the first two sets of paths, we take advantage of the high connectivity of the graph.
Linking half-integrally inside of the bramble is easy, and its structure allows us to link any pairs of vertices we like half-integrally.
We need the union of the three sets of paths to form a half-integral solution, so we will choose the first and second sets each to be (almost) vertex-disjoint, and to intersect the bramble $\zB$ in a very limited way.
The third set of paths will be half-integral and completely contained in $\zB$.

The underlying idea behind our approach to finding the first two sets of paths is to contract each bag of the bramble (after doubling vertices in two bags) and try to apply Menger's theorem.
In trying to do this, some issues arise.
First, we want the ends of all $2k$ paths to belong to distinct bags of $\zB$.
More concerningly, contracting the bags of the bramble may destroy the connectivity between the bramble and the terminals $S$ and $T$.
We solve this by throwing away a bounded number of bags from the bramble until we are left with a sub-bramble that is highly connected to $S$ and from $T$.
In Subsection \ref{sec:linkup}, we will show how to find the first two sets of paths (Lemma \ref{lem:linkup}), modulo finding the sub-bramble (Lemma \ref{lem:linkcontracted}), and the third set of paths (Lemma \ref{lem:insideclique}).
Then we show how to put these pieces together to prove Theorem \ref{thm:main}.
In Subsection \ref{sec:littlebramble} we prove Lemma \ref{lem:linkcontracted}.

\subsection{Linking into and inside of a depth two bramble}\label{sec:linkup}

\begin{lemma}\label{lem:linkup}
Let $G$ be a $(36k^3 + 2k)$-strongly connected directed graph and $\zB$ be a bramble of depth two and size $>188k^3$ in $G$.
Let $(G, S = (s_1, \dots, s_k), T = (t_1, \dots, t_k))$ be a $k$-linkage problem instance.
Then we can find paths $P_1^s, \dots, P_k^s, P_1^t, \dots, P_k^t$ and $\zB' \subseteq \zB$ satisfying the following:
\begin{itemize}
\item[{\bf \Aone:}] For each $i$, $P_i^s$ is a directed path from $s_i$ to some vertex $s_i'$, and $P_i^t$ is a directed path from some vertex $t_i'$ to $t_i$.
\item[{\bf \Atwo:}] The vertices $s_1', \dots, s_k', t_1', \dots, t_k'$ belong to distinct bags of $\zB'$, say $B_1^s, \dots, B_k^s, B_1^t, \dots, B_k^t$, respectively.
\item[{\bf \Afou:}] Every vertex belongs to at most two of $P_1^s, \dots, P_k^s$, and if a vertex $v$ does belong to two paths, say $P_i^s$ and $P_j^s (i\neq j)$, then $v= s_i'$ or $v= s_j'$.
\item[{\bf \Afiv:}] Similarly, every vertex belongs to at most two of $P_1^t, \dots, P_k^t$, and if a vertex $v$ does belong to two paths, say $P_i^t$ and $P_j^t (i\neq j)$, then $v= t_i'$ or $v= t_j'$.
\item[{\bf \Asix:}] For each $i$, the internal vertices of $P_i^s$ and of $P_i^t$ belong to at most one bag of $\zB'$.
\item[{\bf \Athr:}] For each $i,j,\ell$ all distinct ,  $P_i^s \cap P_j^t \cap (B_{\ell}^s \cup B_{\ell}^t) = \emptyset$.
\item[{\bf \Asev:}] Every vertex belongs to at most two of $P_1^s, \dots, P_k^s, P_1^t, \dots, P_k^t$.

\end{itemize}
Moreover, given the bags of $\zB$, we can find the paths $P_1^s, \dots, P_k^s, P_1^t, \dots, P_k^t$ in time $O(k^4n^2)$.
\end{lemma}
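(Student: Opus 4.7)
The plan is to work in the contracted, doubled auxiliary graph $G(\zB';\zB)$ for a carefully chosen sub-bramble $\zB' \subseteq \zB$. The doubling construction is the key device enforcing depth-two half-integrality: each vertex of $G$ lying in two bags of $\zB$, at least one of which is in $\zB'$, gains two avatars in $G(\zB';\zB)$, so two internally disjoint contracted paths can each use a separate avatar, yielding the ``at most twice'' bound once lifted back to $G$. Because each bag of $\zB'$ is contracted to a single vertex of the bidirected clique $K_{\zB'}$, we gain great flexibility both in selecting endpoints for our two linkages and in rerouting through $K_{\zB'}$.

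The argument proceeds in three stages. First, invoke Lemma \ref{lem:linkcontracted} (proved in Subsection \ref{sec:littlebramble}) to extract a sub-bramble $\zB' \subseteq \zB$ of size at least $2k$ such that, in $G(\zB';\zB)$, $S$ is $k$-connected to $K_{\zB'}$ and $K_{\zB'}$ is $k$-connected to $T$. The hypothesis $|\zB|>188k^3$ provides the slack to discard any bags responsible for a small separator, and the strong-connectivity hypothesis $(36k^3+2k)$ on $G$ is what ultimately underwrites this. Second, apply Theorem \ref{thm:menger} twice in $G(\zB';\zB)$ to produce a $\zB$-minimal linkage $\zQ^s=\{Q_1^s,\dots,Q_k^s\}$ from $S$ to $K_{\zB'}$ and a $\zB$-minimal linkage $\zQ^t=\{Q_1^t,\dots,Q_k^t\}$ from $K_{\zB'}$ to $T$. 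Using the bidirected clique structure of $K_{\zB'}$ together with $|K_{\zB'}|\geq 2k$, reroute inside $K_{\zB'}$ so the $2k$ endpoints in $K_{\zB'}$ across both linkages are distinct contracted vertices $v_1^s,\dots,v_k^s,v_1^t,\dots,v_k^t$, defining distinct bags $B_i^s=\im(v_i^s)$ and $B_i^t=\im(v_i^t)$. Third, lift each contracted path back to $G$: let $P_i^s$ be the lifted $Q_i^s$ truncated at the first vertex $s_i'\in B_i^s$ it meets, and $P_i^t$ the lifted $Q_i^t$ starting from the last vertex $t_i'\in B_i^t$. Properties \Aone\ and \Atwo\ follow by construction.

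The main work is to verify the remaining properties after lifting. Pairwise disjointness of $\zQ^s$ inside $G(\zB';\zB)\setminus K_{\zB'}$ lifts to pairwise disjointness of $P_1^s,\dots,P_k^s$ outside their endpoint bags, and distinctness of the $s_i'$ rules out sharing at endpoints, giving \Afou; the symmetric argument gives \Afiv. For \Asev, any threefold use of a single vertex of $G$ would force either a violation of within-linkage disjointness in the contracted graph or a vertex simultaneously in three bags of $\zB$, impossible by depth two. The subtle property is \Asix: \emph{a priori}, an internal vertex of the lifted $P_i^s$ could lie in $B_j^s=\im(v_j^s)$ for some $j\neq i$, since $\zB$-minimality only excludes $\im(v)$ for $v$ that is not an endpoint of any path. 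I would eliminate this with a minimality argument: among all $\zB$-minimal linkage pairs realizing $2k$ distinct endpoints, choose one minimizing the total number of $\zB'$-bag visits by the lifted paths. Any path internally entering a foreign bag $B_j^s$ can be shortcut at its first entry into that bag and its endpoint reassigned, exploiting strong connectivity of the bag and a spare bag among the at least $|\zB'|-2k$ unused ones, contradicting minimality. Once \Asix\ holds, \Athr\ is immediate: $P_i^s$ meets no bag of $\zB'$ other than $B_i^s$, so $P_i^s\cap P_j^t\cap(B_\ell^s\cup B_\ell^t)=\emptyset$ whenever $\ell$ is distinct from both $i$ and $j$. The $O(k^4 n^2)$ runtime absorbs the two Menger computations and the bounded number of rerouting and minimality-adjustment steps within $K_{\zB'}$.
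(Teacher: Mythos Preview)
Your plan has the right overall architecture, but there is a genuine gap in how you handle properties \Athr\ and \Asev, and it stems from collapsing the two nested brambles of Lemma~\ref{lem:linkcontracted} into a single $\zB'$.

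First, Lemma~\ref{lem:linkcontracted} does not produce one sub-bramble with two-sided $k$-connectivity in $G(\zB';\zB)$; it produces a nested pair $\zB_T\subseteq\zB_S$ with connectivity guarantees in the \emph{different} graphs $G(\zB_S;\zB)$ and $G(\zB_T;\zB_S)$. The paper then finds the $S$-side linkage in $G(\zB_T;\zB)\setminus W$ (where $W$ is the set of vertices in one bag of $\zB_T$ but two of $\zB_S$) and the $T$-side linkage in $G(\zB_T;\zB_S)\setminus\{v_1,\dots,v_k,s_1',\dots,s_k'\}$. This asymmetry is not cosmetic: it is exactly what makes \Athr\ go through. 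Suppose $v$ is an internal vertex of both $P_i^s$ and $P_j^t$ lying in some $B_\ell^s$. From the $S$-side (computed in $G(\zB_T;\zB)\setminus W$) one deduces $v$ lies in one bag of $\zB_T$ and, since $v\notin W$, in only one bag of $\zB_S$. But being an uncontracted internal vertex of the $T$-side path in $G(\zB_T;\zB_S)$ while belonging to a $\zB_T$-bag forces $v$ into two bags of $\zB_S$, a contradiction. If both linkages live in the \emph{same} graph $G(\zB';\zB)$, this tension disappears: a vertex in exactly one bag of $\zB'$ survives uncontracted and can perfectly well be internal in one $s$-path and one $t$-path while sitting in a third bag $B_\ell^s$. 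Your derivation of \Athr\ from \Asix\ relies on the claim that ``$P_i^s$ meets no bag of $\zB'$ other than $B_i^s$,'' but \Asix\ says only that each internal vertex lies in at most one bag of $\zB'$---it may be any bag, not necessarily $B_i^s$.

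The same issue undermines \Asev. In your setup nothing prevents $s_i'$ (which lies in one bag $B_i^s$ and so survives in $G(\zB';\zB)$) from being simultaneously an internal vertex of some $P_j$ and of some $Q_\ell$, since the two Menger linkages are found independently in the same graph. The paper prevents this by explicitly deleting $s_1',\dots,s_k'$ before computing the $T$-side linkage and by invoking $\zB_T$-minimality of the $S$-side linkage for the reverse case. Finally, your minimality/rerouting argument for \Asix\ is both unnecessary (the at-most-one-bag property is automatic from the doubling/contracting construction) and aimed at the stronger, incorrect reading of \Asix; the rerouting you sketch would also not terminate cleanly, since redirecting $P_i^s$ into $B_j^s$ collides with $P_j^s$ already ending there.
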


We will prove the following lemma as an intermediate step to Lemma \ref{lem:linkcontracted} in Section \ref{sec:littlebramble}.
\begin{lemma}\label{lem:linkcontracted}
Let $G$ be a $(36k^3 + 2k)$-strongly connected directed graph and $\zB$ be a bramble of depth two and size $>188k^3$ in $G$.
Let $(G, S, T)$ be a $k$-linkage problem instance.
Assume $\zB$ is disjoint from $\{s_i, t_i; 1\leq i \leq k\}$.
Then there exist brambles $\zB_S$ and $\zB_T$ with $\zB_T \subseteq \zB_S \subseteq \zB$ such that $S$ is $(36k^3 + 2k)$-connected to $K_{\zB_S}$ in $G(\zB_S; \zB)$ and $T$ is $3k$-connected to $K_{\zB_T}$ in $G(\zB_T; \zB_S)$. 
Also $|\zB_S| - |\zB_T| < 36k^3$.
Moreover we can find $\zB_S$ and $\zB_T$ in time $O(k^4n^2)$.
\end{lemma}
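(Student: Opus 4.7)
I would construct $\zB_S$ and $\zB_T$ in two phases of iterative pruning, each phase repeatedly invoking Menger's theorem (Theorem~\ref{thm:menger}) on the appropriate contracted graph to find a bottleneck separator and then removing the bags whose contracted vertices appear in that separator. Concretely: initialize $\zB_S^{(0)} := \zB$; at iteration $i$, compute a minimum separator $X_i$ between $S$ and $K_{\zB_S^{(i)}}$ in $G(\zB_S^{(i)};\zB)$; if $|X_i| \ge 36k^3+2k$ set $\zB_S := \zB_S^{(i)}$ and move to the second phase, otherwise set $\zB_S^{(i+1)} := \zB_S^{(i)}\setminus \zB_{X_i}$, where $\zB_{X_i}$ is the collection of bags whose contracted vertices lie in $X_i \cap K_{\zB_S^{(i)}}$. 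Phase~2 is identical with $S$ replaced by $T$, starting from $\zB_T^{(0)} := \zB_S$, using $G(\zB_T^{(j)};\zB_S)$, and targeting connectivity $3k$ rather than $36k^3+2k$.

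The first thing to check is that each iteration makes genuine progress by removing at least one bag, i.e., that $X_i$ genuinely intersects $K_{\zB_S^{(i)}}$. This follows from the $(36k^3+2k)$-strong connectivity of $G$: since $V(G(\zB_S^{(i)};\zB))\setminus K_{\zB_S^{(i)}} \subseteq V(G)$, a separator of order less than $36k^3+2k$ that avoided $K_{\zB_S^{(i)}}$ would lift to a separator in $G$ of the same order separating $S$ from $\bigcup_{B \in \zB_S^{(i)}} V(B)$, which is impossible since $\zB$ is disjoint from $S\cup T$ and $G$ is $(36k^3+2k)$-strongly connected. The same argument handles Phase~2 since $3k < 36k^3+2k$.

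The main obstacle, and the technical heart of the argument, is an amortized bound on the total number of bags removed across the iterations of a single phase. I would prove that, over the full execution of a phase with target connectivity $\tau$, the total number of bags removed is at most $\tau$; this gives $|\zB|-|\zB_S|\le 36k^3+2k$ (so $|\zB_S| \ge 188k^3-36k^3-2k > 0$, leaving ample bramble for Phase~2) and $|\zB_S|-|\zB_T|\le 3k < 36k^3$, as required. The intuition is that removing a single bag $B$ from the inner bramble ``releases'' its vertices---previously collapsed into one clique-vertex of $K_{\zB_S^{(i)}}$---back into the contracted graph as a strongly connected subgraph; combined with the strong connectivity of $G$, this supports at least one additional internally disjoint $S$-to-$K_{\zB_S^{(i+1)}}$ path per bag removed, so the minimum cut grows by at least the number of bags removed at each iteration. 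With this bound, each phase performs at most $\tau$ iterations, each dominated by an $O(n^2)$ Menger call and a linear-time reconstruction of the contracted graph; since $\tau = O(k^3)$ for Phase~1 and $\tau = O(k)$ for Phase~2, the total runtime fits comfortably within the claimed $O(k^4 n^2)$ bound.
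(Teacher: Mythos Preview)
Your two-phase iterative pruning scheme is exactly the skeleton the paper uses (it proves a one-sided Lemma~\ref{lem:linkoneside} and then applies it once for $S$ with $\alpha=36k^3+2k$ and once, symmetrically, for $T$ with $\alpha=3k$). Your observation that a small separator disjoint from $K_{\zB_S^{(i)}}$ lifts to a small separator in $G$, contradicting strong connectivity, is also used and is fine.

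The gap is your amortized bound. You assert that ``the minimum cut grows by at least the number of bags removed at each iteration,'' but the sketch you give does not establish this and I do not believe it is true. When you remove the bags $\zB_{X_i}$, their vertices are released into $G(\zB_S^{(i+1)};\zB)$, but two released bags can now share vertices (a vertex in two bags of $\zB$, neither of which survives into $\zB_S^{(i+1)}$, is \emph{not} doubled in $G(\zB_S^{(i+1)};\zB)$), and the released subgraphs can themselves contain new bottlenecks. Nothing in the strong connectivity of $G$ forces the $S$--$K$ minimum cut in the contracted graph to increase at all, let alone by one per bag; strong connectivity only tells you that a small cut cannot avoid $K$ entirely. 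Indeed, the paper does not claim any monotonicity of the minimum cut across iterations, and its bound on bags removed is $O(k\alpha^2)$, not $\alpha$.

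What the paper actually does for termination is quite different. It runs the pruning for $M=2k\alpha$ rounds and shows that \emph{some} round must produce a proper $S$--$K$ separation of order $\ge\alpha$. The argument is: assuming every round's separation has order $<\alpha$, first extract $k+1$ rounds whose separators miss each other's removed bags (a counting argument, since each small separator can ``block'' only $\le 2\alpha$ rounds); then, because each separation is proper, each one leaves some $s_i$ outside the cut, and with $k+1$ proper subsets of $S$ a small pigeonhole lemma (Lemma~\ref{lem:subsets}) gives two rounds $i<j$ whose ``missed'' sets do not cover $S$. Finally a submodularity computation on the pair of separations $(A_i,B_i)$ and $(A_j',B_j')$ produces a strictly smaller proper separation at round $i$, contradicting minimality. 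This uncrossing step---not a monotone-flow argument---is the technical heart, and it is what your proposal is missing.
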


But first, let's see how Lemma \ref{lem:linkcontracted} implies Lemma \ref{lem:linkup}.
\begin{proof}[Proof of Lemma \ref{lem:linkup}]
Consider the brambles $\zB_S$ and $\zB_T$ given by Lemma \ref{lem:linkcontracted}.
Denote by $W$ the vertices in $G(\zB_T; \zB)$ that belong to exactly one bag in $\zB_T$ and to two bags in $\zB_S$.

\begin{claim*}
{There exist $k$ vertex-disjoint paths $P_1, \dots, P_k$ in $G(\zB_T; \zB)\setminus W$ where $P_i$ links $s_i$ to $v_i$, for some $v_i\in K_{\zB_T}$.}
\end{claim*}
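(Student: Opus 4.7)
The plan is to apply Menger's theorem (Theorem~\ref{thm:menger}) in $G(\zB_T;\zB)\setminus W$ and show that $S$ is $k$-connected to $K_{\zB_T}$ there. Since $|S|=k$, the $k$ vertex-disjoint paths that Menger then produces must have distinct starts---one per $s_i$---giving the required $P_1,\dots,P_k$ with $v_i\in K_{\zB_T}$.

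Suppose for contradiction that $(A,B)$ is a separation of $G(\zB_T;\zB)\setminus W$ of order strictly less than $k$, with $S\subseteq A$ and $K_{\zB_T}\subseteq B$. The idea is to lift $(A,B)$ to a separation $(A^*,B^*)$ of $G(\zB_S;\zB)$ that separates $S$ from $K_{\zB_S}$ and has order strictly less than $36k^3+2k$, contradicting the $(36k^3+2k)$-connectivity guaranteed by Lemma~\ref{lem:linkcontracted}. Place every vertex of $K_{\zB_S}\setminus K_{\zB_T}$ into $A^*\cap B^*$; this contributes at most $|\zB_S|-|\zB_T|<36k^3$ to the order. For each remaining vertex of $G(\zB_S;\zB)$---each vertex of $K_{\zB_T}$ and each uncontracted vertex, including doubles $v'$ that appear in $G(\zB_S;\zB)$ but whose original $v$ is treated differently in $G(\zB_T;\zB)$---let it inherit its side from $(A,B)$ via the natural correspondence with $G(\zB_T;\zB)\setminus W$. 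Then $|A^*\cap B^*|\le|A\cap B|+|\zB_S|-|\zB_T|<k+36k^3<36k^3+2k$, and both $S\subseteq A^*$ and $K_{\zB_S}\subseteq B^*$ follow immediately from the construction.

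The main obstacle is verifying that $(A^*,B^*)$ is in fact a separation, i.e.~that no edge of $G(\zB_S;\zB)$ crosses from $A^*\setminus B^*$ to $B^*\setminus A^*$. This is precisely what the definition of $W$ is designed to ensure: $W$ consists exactly of the uncontracted doubles of $G(\zB_T;\zB)$ that become absorbed into $K_{\zB_S}\setminus K_{\zB_T}$ upon passing to $G(\zB_S;\zB)$, so once $K_{\zB_S}\setminus K_{\zB_T}$ is placed in the separator of $(A^*,B^*)$, every edge of $G(\zB_S;\zB)$ between two non-separator vertices pairs with an edge of $G(\zB_T;\zB)\setminus W$ whose endpoints sit on the corresponding sides of $(A,B)$. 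A short case analysis on the types of endpoints (both in $K_{\zB_T}$, mixed, or both uncontracted, with sub-cases for doubles behaving differently in the two graphs) then projects any hypothetical crossing edge of $(A^*,B^*)$ onto a crossing edge of $(A,B)$, contradicting that $(A,B)$ is a separation.
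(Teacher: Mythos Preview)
Your proposal is correct and follows essentially the same approach as the paper: assume no $k$ disjoint $S$--$K_{\zB_T}$ paths exist, invoke Menger to get a small separation $(A,B)$ of $G(\zB_T;\zB)\setminus W$, lift it to a separation of $G(\zB_S;\zB)$ by placing $K_{\zB_S}\setminus K_{\zB_T}$ in the separator, and contradict Lemma~\ref{lem:linkcontracted}. The paper's lift differs only in how it assigns the $K_{\zB_T}$-part of $K_{\zB_S}$ (via the rule $\im(v)\cap A\neq\emptyset$ rather than by direct inheritance), and correspondingly bounds the order by $2|A\cap B|+36k^3$ instead of your $|A\cap B|+36k^3$; both are below $36k^3+2k$. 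One small slip: you refer to ``doubles $v'$ that appear in $G(\zB_S;\zB)$'', but by construction every double is contracted, so the uncontracted vertices of $G(\zB_S;\zB)$ are all original vertices of $G$ --- this only simplifies your case analysis.
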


Suppose not; then by Menger's theorem there exists a separation $(A,B)$ of order $<k$ in $G(\zB_T; \zB)\setminus W$ separating $S$ from $K_{\zB_T}$.
But then consider the following separation in $G(\zB_S)$.
Let 
$$A' = (A\cap V(G(\zB_S;\zB))) \cup \{v\in K_{\zB_S}: \im(v) \cap A \neq \emptyset \} \cup (K_{\zB_S}\setminus K_{\zB_T})$$ and
$$B' = (B\cap V(G(\zB_S;\zB))) \cup \{v\in K_{\zB_S}: \im(v) \cap B \neq \emptyset \} \cup (K_{\zB_S}\setminus K_{\zB_T}).$$
Intuitively, $(A',B')$ is the separation $(A,B)$ viewed in the graph $G(\zB_S; \zB)$, plus we add the vertices of $K_{\zB_S}\setminus K_{\zB_T}$ to each side.
It's easy to check that $(A',B')$ is a separation in $G(\zB_S; \zB)$, since every vertex in $V(G(\zB_T; \zB)) \setminus V(G(\zB_S; \zB))$ belongs to $\im(v)$ for some $v\in K_{\zB_S}$.
Also, we have $|A'\cap B'| \leq 2|A\cap B| + 36k^3$ because every vertex belongs to at most two bags of $\zB_S$ and every vertex in $W$ belongs to one bag of $\zB_S\setminus \zB_T$.
But this contradicts Lemma \ref{lem:linkcontracted} and proves the claim.

Choose the paths $P_1,\dots,P_k$ so that they are $\zB_T$-minimal in $G(\zB_T; \zB)$.
Let us now view these as paths in the original graph $G$:
Since $V(G(\zB_T; \zB))\setminus K_{\zB_T}\subseteq V(G)$, each vertex in $P_i$ except $v_i$ is a vertex of $G$, for each $1\leq i\leq k$.
So choose $s_i'\in \im(v_i)$ such that there exists an edge from the second to last vertex of $P_i$ to $s_i'$. 
Then let $P_i^s$ be the path obtained from $P_i$ by replacing $v_i$ with $s_i'$.
Notice that $P_i^s$ is a path in $G$.
The paths $P_1^s,\dots,P_k^s$ are internally disjoint, so they satisfy \Afou.

\begin{claim*}
{There exist vertex-disjoint paths $Q_1, \dots, Q_k$ in $G(\zB_T; \zB_S)\setminus \{v_1, \dots, v_k, s_1',\dots,s_k'\}$ where $Q_i$ links $w_i$ to $t_i$ for some $w_i \in  K_{\zB_T}$.
Moreover, the vertices $v_1,\dots,v_k,w_1,\dots,w_k$ are distinct.}
\end{claim*}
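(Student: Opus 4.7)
The plan is to apply Menger's theorem (Theorem \ref{thm:menger}) in the graph $G^* := G(\zB_T;\zB_S) \setminus \{v_1,\dots,v_k, s_1',\dots,s_k'\}$ to obtain $k$ vertex-disjoint paths from $K_{\zB_T}\setminus\{v_1,\dots,v_k\}$ to $T$. Once such a linkage is produced, the claim follows by relabeling: take $Q_i$ to be the path ending at $t_i$, and let $w_i \in K_{\zB_T}\setminus\{v_1,\dots,v_k\}$ be its starting vertex. Then $w_j \neq v_i$ by construction, and the $w_j$ are pairwise distinct by vertex-disjointness of the paths, which gives the ``moreover'' clause.

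To justify the Menger application I would argue by contradiction. Suppose $(A,B)$ is a separation of $G^*$ of order strictly less than $k$ with $K_{\zB_T}\setminus\{v_1,\dots,v_k\} \subseteq A$ and $T \subseteq B$. Lift $(A,B)$ to a pair $(A',B')$ in $G(\zB_T;\zB_S)$ by placing all of $\{v_1,\dots,v_k, s_1',\dots,s_k'\} \cap V(G(\zB_T;\zB_S))$ into both $A'$ and $B'$. It is routine to check that $(A',B')$ is still a separation: $A'\cup B' = V(G(\zB_T;\zB_S))$, and no edge crosses from $A'\setminus B'$ to $B'\setminus A'$ since those sets coincide with $A\setminus B$ and $B\setminus A$ in $G^*$, where no such edge exists. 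The order of $(A',B')$ is at most $(k-1)+2k = 3k-1$.

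Next I would verify that $(A',B')$ separates $K_{\zB_T}$ from $T$ in $G(\zB_T;\zB_S)$. The inclusion $K_{\zB_T}\subseteq A'$ is immediate since $\{v_1,\dots,v_k\}\subseteq K_{\zB_T}$ has been added to $A'$. For $T \subseteq B'$, I would invoke the hypothesis of Lemma \ref{lem:linkcontracted} that $\zB$ is disjoint from $T$: this ensures $T$ contains no $v_i$ (each $v_i\in K_{\zB_T}$ corresponds to a bag of $\zB$) and no $s_i'$ (since $s_i'\in \im(v_i)$ lies in a bag of $\zB$), so $T\subseteq B$ implies $T \subseteq B'$. This contradicts the $3k$-connectivity of $T$ to $K_{\zB_T}$ in $G(\zB_T;\zB_S)$ asserted by Lemma \ref{lem:linkcontracted}.

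The only delicate bookkeeping, and the point I expect is most likely to trip one up on a first reading, is that some of the $s_i'$ may fail to be vertices of $G(\zB_T;\zB_S)$ at all — specifically, $s_i'$ is doubled and hence survives in $G(\zB_T;\zB_S)$ precisely when it belongs to two bags of $\zB_S$, and otherwise it is contracted away. This causes no trouble for the argument: we simply remove from $G^*$ only those $s_i'$ that are actually present, and the upper bound of $2k$ on the increase of separation order when lifting $(A,B)$ to $(A',B')$ is unchanged.
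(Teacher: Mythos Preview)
Your proposal is correct and follows essentially the same argument as the paper: assume no such linkage exists, apply Menger to obtain a separation of order $<k$ in the deleted graph, add the deleted vertices back to both sides to obtain a separation of order $<3k$ in $G(\zB_T;\zB_S)$, and contradict Lemma~\ref{lem:linkcontracted}. Your additional remarks about why $T$ avoids the $v_i$ and $s_i'$, and about which $s_i'$ actually survive as vertices of $G(\zB_T;\zB_S)$, are details the paper leaves implicit.
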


Suppose not; then by Menger's theorem, in the graph $G(\zB_T; \zB_S) \setminus \{v_1, \dots, v_k, s_1',\dots,s_k'\}$ there is a separation $(A,B)$ of order $<k$ properly separating $K_{\zB_T}$ from $T$.
But then $(A\cup \{v_1,\dots, v_k,s_1',\dots,s_k'\}, B\cup \{v_1,\dots,v_k,s_1',\dots,s_k'\})$ has order $<3k$ and properly separates $K_{\zB_T}$ from $T$ in $G(\zB_T;\zB_S)$, contradicting Lemma \ref{lem:linkcontracted}.
This proves the claim.

We may also choose the paths $Q_1,\dots, Q_k$ to be $\zB_T$-minimal in $G(\zB_T; \zB_S)$.
Viewing these paths as paths in $G$ as above (symmetrically), we obtain paths $P_1^t, \dots, P_k^t$, with $P_i^t$ joining $t_i'$ to $t_i$.
These paths satisfy \Afiv.

Let $\zB' = \{\im(v): v\in \{v_1,\dots,v_k,w_1,\dots,w_k\} \}$.
For each $i$, set $B_i^s = \im(v_i)$ and $B_i^t = \im(w_i)$.
We now check that the paths $P_1^s, \dots, P_k^s, P_1^t, \dots, P_k^t$ satisfy the seven assertions in the lemma statement.
\Aone, \Atwo, \Afou and \Afiv have already been established.

To see that \Asix holds, note that each of $P_1,\dots,P_k$ is internally disjoint from $K_{\zB_T}$ in $G(\zB_T; \zB)$.
Similarly, the $Q_1,\dots,Q_k$ paths are internally disjoint from $K_{\zB_T}$ in $G(\zB_T; \zB_S)$.
Moreover, by the definition of $G(\zB_T;\zB)$ and $G(\zB_T;\zB_S)$, every vertex not in $K_{\zB_T}$ in either of those graphs belongs to at most one bag of $\zB_T$ and therefore to at most one bag of $\zB'$.
It follows that for each $i$, each internal vertex of $P_i^s$ and $P_i^t$ belongs to at most one bag of $\zB'$, proving \Asix.

To see \Athr, let $1\leq i,j,\ell \leq k$ be distinct.
Suppose for contradiction that some vertex $v$ belongs to $P_i^s \cap P_j^t \cap (B_{\ell}^s \cup B_{\ell}^t)$.
If $v$ is an internal vertex of either $P_i^s$ or $P_j^t$ then $v$ belongs to only one bag of $\zB'$ by \Asix.
Also, if $v = s_i'$ or $t_j'$ then $v$ belongs to two bags of $\zB'$.
We deduce that $v$ is an internal vertex of both $P_i^s$ and $P_j^t$.
Since we found $P_i$ in the graph $G(\zB_T; \zB) \setminus W$, we know $v\notin W$ so $v$ belongs to one bag in $\zB_T$ and one bag of $\zB_S$.
But we found $Q_j$ in the graph $G(\zB_T; \zB_S)$, so $v$ belongs to one bag of $\zB_T$ and two bags of $\zB_S$.
This is a contradiction, proving \Athr.

Finally, let us check \Asev.
Suppose for contradiction's sake that some vertex $v\in V(G)$ belongs to three paths. 
By \Afou and \Afiv, we must have $v\in P_i^s\cap P_j^s\cap P_{\ell}^t$ or $v\in P_i^t\cap P_j^t\cap P_{\ell}^s$ for some $1\leq i, j, \ell \leq k$. 
If $v\in P_i^s\cap P_j^s\cap P_{\ell}^t$, then by \Afou we may assume without loss of generality that $v = s_i'$.
But the path $Q_{\ell}$ was found in a graph not containing $v_i$ or $s_i'$,
so we must have $s_i' \in B_{\ell}^t \cap B_i^s$.
Since $\zB'$ is depth two, $v\notin B_j^s$ so $v$ is an internal vertex of $P_j^s$, contradicting \Asix.
If $v\in P_i^t\cap P_j^t\cap P_{\ell}^s$, then without loss of generality $v=t_i'\in B_i^t = \im(w_i)$.
By the $\zB_T$-minimality of $P_1,\dots,P_k$, $v$ cannot be an internal vertex of $P_{\ell}$ so we have $v=s_{\ell}'\in B_{\ell}^s$.
Since $v$ belongs to two bags, \Asix implies that $v=t_j'$, a contradiction.

It remains to check that we can indeed find these paths in time $O(k^4n^2)$.
Indeed finding the brambles $\zB_S$ and $\zB_T$ takes time $O(k^4n^2)$ using Lemma \ref{lem:linkcontracted}.
Then, the sets of paths $P_1,\dots,P_k$ and $Q_1,\dots,Q_k$ can be found in time $O(n^2)$ according to Theorem \ref{thm:menger}, and from these we can easily get $P_1^s, \dots, P_k^s, P_1^t, \dots, P_k^t$ in linear time.

\end{proof}

The following lemma shows how to solve any linkage problem half-integrally in a depth two bramble, provided the terminals belong to distinct bags.
\begin{lemma}\label{lem:insideclique}
For all $k\geq 2$, let $G$ be a directed graph and let $S' = (s_1', \dots, s_k')$ and $T' = (t_1', \dots, t_k'))$ be two ordered $k$-tuples of vertices in $G$. Suppose $\zB$ is a bramble of depth two in $G$, and $s_1', \dots, s_k', t_1', \dots, t_k'$ belong to distinct bags $B_1^s, \dots, B_k^s, B_1^t, \dots, B_k^t$, respectively of $\zB$.  Then there exist paths $P_1, \dots, P_k$ such that $P_i$ links $s_i'$ to $t_i'$ and, additionally, every vertex of $G$ is in at most two distinct paths $P_i$.  Finally, it also holds that $P_i \subseteq B_i^s \cup B_i^t$ for each $i$, and we can find the paths $P_1, \dots, P_k$ in time $O(kn^2)$.
\end{lemma}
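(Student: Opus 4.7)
The plan is to construct each path $P_i$ independently inside the pair of bags $B_i^s \cup B_i^t$, exploiting the strong connectivity of the bags and the touching condition in the definition of a bramble. Since the bags $B_i^s$ and $B_i^t$ are both in the bramble $\zB$, either they share a vertex or there is an edge from $B_i^s$ to $B_i^t$ (and another in the reverse direction, which we will not need). In the first case, picking any $v \in V(B_i^s) \cap V(B_i^t)$, I would use strong connectivity of $B_i^s$ to find a directed path from $s_i'$ to $v$ inside $B_i^s$, and strong connectivity of $B_i^t$ to find a path from $v$ to $t_i'$ inside $B_i^t$, concatenating them to obtain $P_i$. In the second case, pick an edge $(u,w)$ with $u \in V(B_i^s)$ and $w \in V(B_i^t)$ and concatenate an $s_i'$-to-$u$ path in $B_i^s$, the edge $(u,w)$, and a $w$-to-$t_i'$ path in $B_i^t$. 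Either way $P_i \subseteq B_i^s \cup B_i^t$ and links $s_i'$ to $t_i'$.

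Next I would verify that this family is half-integral. Because $P_i \subseteq B_i^s \cup B_i^t$, any vertex $v$ appearing in $P_i$ must lie in $V(B_i^s) \cup V(B_i^t)$. Hence if $v$ lies in three distinct paths $P_{i_1}, P_{i_2}, P_{i_3}$, then $v$ belongs to at least one bag from each of the three disjoint pairs $\{B_{i_j}^s, B_{i_j}^t\}$, and since the $2k$ bags $B_1^s, \dots, B_k^s, B_1^t, \dots, B_k^t$ are pairwise distinct, $v$ would belong to at least three distinct bags of $\zB$. This contradicts the fact that $\zB$ has depth two, so every vertex is in at most two of the $P_i$.

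For the running time, the shared vertex or crossing edge between $B_i^s$ and $B_i^t$ can be located in $O(n + |E(G)|) = O(n^2)$ time by scanning, and each of the two path searches inside a strongly connected bag is a single BFS/DFS taking $O(n^2)$ time, for a total of $O(n^2)$ per index $i$ and $O(kn^2)$ overall.

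I do not anticipate any real obstacle: the entire argument is a direct use of the definitions, with strong connectivity inside each bag handling the within-bag routing and the bramble touching condition handling the transition between $B_i^s$ and $B_i^t$. The only point requiring care is the depth-two verification, where it is crucial that the bags labelled $B_i^s, B_i^t$ are all distinct so that three paths through a common vertex would force that vertex into three distinct bags of $\zB$.
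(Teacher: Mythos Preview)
Your proposal is correct and follows essentially the same approach as the paper: construct each $P_i$ by concatenating a path in $B_i^s$ to the touching vertex or edge and then a path in $B_i^t$, and deduce half-integrality from the depth-two hypothesis together with the distinctness of the $2k$ bags. Your write-up is in fact slightly more explicit than the paper's in spelling out the two touching cases and the running-time analysis.
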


\begin{proof}
For each $i$, we obtain $P_i$ as follows. 
By the definition of a bramble, there exist vertices $v_i\in B_i^s $ and $w_i \in B_i^t$ with either $v_i=w_i$ or $(v_i,w_i)\in E(G)$.
Since $B_i^s$ and $B_i^t$ are both strongly connected, there exist a directed path from $s_i'$ to $v_i$ contained in $B_i^s$ and a directed path from $w_i$ to $t_i'$ contained in $B_i^t$. 
Take $P_i$ to be the concatenation of these two paths.
By construction, each $P_i$ belongs to $B_i^s \cup B_i^t$.
Further, since the bags $B_1^s, \dots, B_k^s, B_1^t, \dots, B_k^t$ are distinct, and every vertex in $G$ belongs to at most two distinct bags, it follows that $P_1, \dots, P_k$ is the desired collection of paths.
Each $P_i$ can be found in time $O(n^2)$, and so the overall running time of $O(kn^2)$ follows.
\end{proof}

We can deduce Theorem \ref{thm:main} from Lemmas \ref{lem:linkup} and \ref{lem:insideclique} as follows.
\begin{proof}[Proof of Theorem \ref{thm:main}]
Let $P_1^s, \dots, P_k^s, P_1^t, \dots, P_k^t$ and $s_1', \dots, s_k', t_1', \dots, t_k'$ and $\zB' = B_1^s, \dots, B_k^s, B_1^t, \dots, B_k^t$ satisfy \Aone - \Asev, as given by Lemma \ref{lem:linkup}.

By \Atwo, $G$, $S' = (s_1', \dots, s_k')$ and  $T' = (t_1', \dots, t_k'))$ satisfying the hypothesis of Lemma \ref{lem:insideclique}.
Let $P_1,\dots,P_k$ be the paths guaranteed by that lemma.

For each $1\leq i\leq k$, let $Q_i = P_i^sP_i P_i^t$ be the concatenation of these three paths.
Clearly, each $Q_i$ is a directed walk linking $s_i$ to $t_i$ and therefore contains a directed path from $s_i$ to $t_i$.
We just need to check that the $k$ paths are half-integral.
Suppose for contradiction's sake that some vertex $v\in Q_i\cap Q_j \cap Q_{\ell}$ for some $1\leq i, j ,\ell \leq k$ all distinct.
By symmetry, we can consider four cases.

{\bf Case 1:}  $v\in P_i\cap P_j$.\\
Then, by Lemma \ref{lem:insideclique}, $v\in (B_i^s\cup B_i^t) \cap (B_j^s\cup B_j^t)$, so $v$ belongs to two bags of $\zB'$.
Then by \Asix $v$ is not an internal vertex of $P_{\ell}^s$ or $P_{\ell}^t$, a contradiction.

{\bf Case 2:} $v\in P_i^s\cap P_j^s \cap P_{\ell}$.\\
By \Afou in Lemma \ref{lem:linkup}, we may assume $v = s_i'$, so $v\in P_i$.
Since $v\in P_{\ell}$, it follows $v\in B_i^s \cap (B_{\ell}^s \cup B_{\ell}^t)$.
By \Asix, $v$ is not an internal vertex of $P_j^s$, so $v \in B_j^s$ as well, a contradiction.

{\bf Case 3:} $v\in P_i^t \cap P_j^t \cap P_{\ell}$.\\
By \Afiv in Lemma \ref{lem:linkup}, we may assume $v = t_i'$, so $v\in P_i$. 
Again, since $v\in P_{\ell}$, it follows $v\in B_i^t \cap (B_{\ell}^s \cup B_{\ell}^t)$.
By \Asix, $v$ is not an internal vertex of $P_j^t$ so $v \in B_j^s$ as well, a contradiction.

{\bf Case 4:} $v\in P_i^s \cap P_j^t \cap P_{\ell}$.\\
By Lemma \ref{lem:insideclique} $P_{\ell} \subseteq (B_{\ell}^s \cap B_{\ell}^t)$, but this contradicts \Athr.
By \Athr $v\notin B_{\ell}^s \cap B_{\ell}^t$ so $v\notin P_{\ell}$, a contradiction.

The running time bound of $O(k^4n^2)$ follows from the bounds given by Lemmas \ref{lem:linkup}, \ref{lem:linkcontracted} and \ref{lem:insideclique}.
\end{proof}

\subsection{Finding a bramble to link to}\label{sec:littlebramble}
In this section we prove Lemma \ref{lem:linkcontracted}.
We will need the following easy combinatorial lemma.

\begin{lemma}\label{lem:subsets}
For $k\geq 1$, let $A$ be a set of $k$ elements and suppose $A_1, \dots, A_{k+1}$ are proper subsets of $A$.
Then there exist $i\neq j$ with $A_i\cup A_j \neq A$.
\end{lemma}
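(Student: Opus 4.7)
The plan is to argue by taking complements and then use a simple counting/pigeonhole argument. For each $i$, define $B_i = A \setminus A_i$. Since each $A_i$ is a proper subset of $A$, every $B_i$ is nonempty. The conclusion we want, namely that some pair satisfies $A_i \cup A_j \neq A$, is equivalent by De Morgan to saying that $B_i \cap B_j \neq \emptyset$ for some $i \neq j$.

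So I would proceed by contradiction: suppose $A_i \cup A_j = A$ for every pair $i \neq j$. Then $B_i \cap B_j = \emptyset$ for every pair, meaning that $B_1, \dots, B_{k+1}$ are pairwise disjoint nonempty subsets of the $k$-element set $A$. Summing cardinalities gives
\[
k = |A| \;\geq\; \sum_{i=1}^{k+1} |B_i| \;\geq\; k+1,
\]
a contradiction. Hence some pair must satisfy $B_i \cap B_j \neq \emptyset$, equivalently $A_i \cup A_j \neq A$, as required.

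There is really no obstacle here; the lemma is essentially a pigeonhole statement once one passes to complements. The only thing worth being careful about is the edge case $k = 1$, where $A$ has one element and the only proper subset is $\emptyset$, so $A_1 = A_2 = \emptyset$ and trivially $A_1 \cup A_2 = \emptyset \neq A$. The general argument handles this case without modification.
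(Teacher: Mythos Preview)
Your proof is correct and is in fact cleaner than the paper's. The paper proves the lemma by induction on $k$: one chooses an element $v\in A$ so that at least $k$ of the sets $A_i\setminus\{v\}$ remain proper subsets of $A\setminus\{v\}$, applies the inductive hypothesis to those $k$ sets inside the $(k-1)$-element set $A\setminus\{v\}$, and lifts the resulting pair back to $A$. Your argument bypasses the induction entirely by passing to the complements $B_i=A\setminus A_i$ and observing that $k+1$ nonempty pairwise disjoint subsets cannot fit inside a $k$-element set. This is a genuinely different route: it is a one-line pigeonhole argument rather than an inductive one, and it makes the underlying reason for the bound $k+1$ completely transparent. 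Both approaches are elementary, but yours is shorter and requires no case analysis or choice of an auxiliary element.
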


\begin{proof}
We prove the lemma by induction on $k$. 
The base case $k=1$ is trivial.
Assume that the lemma holds for $k-1$, we will show it must also hold for $k$.
Let $A$ and $A_1, \dots, A_{k+1}$ be as in the lemma statement.
Choose an element $v\in A$ so that (possibly after relabelling) $A_1\setminus\{v\}, \dots, A_{k}\setminus\{v\}$ are proper subsets of $A\setminus v$.
By induction, there exist $i\neq j$ with $(A_i \cup A_j)\setminus\{v\}  \neq A\setminus \{v\}$.
We deduce that $A_i\cup A_j \neq A$.
\end{proof}

We will actually prove Lemma \ref{lem:linkcontracted} one side at a time, applying the following lemma and then a symmetric version of it. 
\begin{lemma}\label{lem:linkoneside}
Let $k,\alpha, \beta$ be integers with $\beta \geq \alpha\geq k$.
Suppose $(G, S = (s_1, \dots, s_k), T = (t_1, \dots, t_k))$ is a $k$-linkage problem instance where $G$ is a $\beta$-strongly connected directed graph 
and $\zB$ be a bramble of depth two and size $> 4k\alpha^2$ in $G$.
Assume $\zB$ is disjoint from $\{s_i,t_i; 1\leq i \leq k\}$.
Then there exists a bramble $\zB_S \subseteq \zB$ such that $S$ is $\alpha$-connected to $K_{\zB_S}$ in $G(\zB_S; \zB)$.
Also $|\zB| - |\zB_S| < 4k\alpha^2$.
Moreover we can find $\zB_S$ in time $O(k\alpha n^2)$.
\end{lemma}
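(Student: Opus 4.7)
The plan is an iterative pruning algorithm. Initialize $\zB_S := \zB$. At each iteration we use the algorithm from Theorem \ref{thm:menger} on the pair $(S, K_{\zB_S})$ in the contracted graph $G(\zB_S; \zB)$. If this certifies that $S$ is $\alpha$-connected to $K_{\zB_S}$, we output $\zB_S$ and halt; otherwise it returns a separation $(A, B)$ of order less than $\alpha$ with $S \subseteq A$ and $K_{\zB_S} \subseteq B$, which we use to shrink $\zB_S$.

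The removal rule works as follows. Since $K_{\zB_S} \subseteq B$, at most $\alpha - 1$ bags of $\zB_S$ have their contracted vertex in $A \cap B$, and we remove all such bags. We also remove any bag of $\zB_S$ whose original vertex set in $G$ lies entirely within the $S$-side of the separation (inside $A \setminus B$ together with the real vertices of $A \cap B$), since such bags are ``trapped'' and cannot contribute to any new $S$-to-$K_{\zB_S}$ path once the separator bags are taken out. Each iteration thus discards $O(\alpha)$ bags.

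To bound the number of iterations, we track a potential function with two components: the size of a maximum vertex-disjoint $S$-to-$K_{\zB_S}$ linkage in $G(\zB_S; \zB)$, and a secondary count of the bags of $\zB$ whose original vertices lie on the $S$-side of the most recent separator. Each iteration either strictly increases the linkage size or strictly decreases the secondary count; the latter is initially bounded by $O(k\alpha)$ thanks to the $\beta$-strong connectivity of $G$ (with $\beta \geq \alpha$). This gives $O(k\alpha)$ iterations in total, $O(\alpha)$ bag removals per iteration, and therefore fewer than $4k\alpha^2$ bags removed overall. The $O(k\alpha n^2)$ running time follows from $O(n^2)$ per call to Theorem \ref{thm:menger}.

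The main obstacle is handling the doubling operation built into the definition of $G(\zB_S; \zB)$. When we remove a bag from $\zB_S$, the doubling status of vertices belonging to two bags of $\zB$ but only one of $\zB_S$ changes, which alters the edge set of the contracted graph. Consequently, a separation of $G(\zB_S; \zB)$ does not lift directly to a separation of $G$. The key technical claim is that a small separator of $G(\zB_S; \zB)$ leaving many bags trapped on the $S$-side corresponds, after uncontracting the separator bags and reversing the doubling, to a separator of $G$ of order less than $\beta$; this contradicts the $\beta$-strong connectivity of $G$. This lifting argument is what bounds the secondary potential and drives the per-iteration progress.
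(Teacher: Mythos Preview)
Your high-level plan matches the paper's: iteratively strip bags of $\zB_S$ that lie in a small $S$--$K_{\zB_S}$ separator, and argue the process terminates after $O(k\alpha)$ rounds. But your termination argument has a real gap. First, in $G(\zB_S;\zB)$ \emph{every} bag of $\zB_S$ is contracted into $K_{\zB_S}$, so there is no such thing as a bag of $\zB_S$ ``whose original vertex set lies entirely on the $S$-side''; your secondary removal rule and secondary potential are not well-defined as stated. Second, even if one reformulates the potential, removing bags can only \emph{shrink} the target $K_{\zB_S}$, so there is no evident reason the maximum $S$--$K_{\zB_S}$ linkage should go up; the claimed monotonicity is unsupported. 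Third, you correctly flag the lifting obstacle---a small separator in $G(\zB_S;\zB)$ does not lift to a small separator in $G$ because uncontracting a clique vertex and undoing the doubling can blow up the cut---but you do not actually overcome it; you only assert that ``many trapped bags'' would give a sub-$\beta$ cut in $G$, without saying how the bag count enters the order of the lifted cut.

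The paper does \emph{not} use a potential function or a single lifting step. It simply runs the pruning loop for $M=2k\alpha$ rounds, recording the minimum proper separation $(A_i',B_i')$ at each step, and proves by contradiction that one of these has order $\ge\alpha$. The contradiction is obtained by a pigeonhole argument: among the $M$ separators, Algorithm~2 together with Lemma~\ref{lem:subsets} (applied to the sets $S\cap A_\ell'\cap B_\ell'$) extracts indices $i<j$ such that $(A_j'\cap B_j')$ avoids all bags in $\zC_i$ and the two cuts do not jointly cover $S$. One then compares the two separations $(A_i',B_i')$ and $(A_j',B_j')$ across the graphs $G_{i-1}$ and $G_{j-1}$: the ``corner'' $A_i\cap A_j'$ contains no contracted vertices and gives a nontrivial separation of $G$ of order $\ge\beta$, while a submodularity-type inequality forces the opposite corner $(A_i'\cup A_j^*,\,B_i'\cap B_j^*)$ in $G_{i-1}$ to have order strictly smaller than $|A_i'\cap B_i'|$, contradicting minimality. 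This two-cut crossing argument is the missing idea in your proposal; a single-cut lifting step of the kind you sketch does not suffice.
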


\begin{proof}[Proof of Lemma \ref{lem:linkcontracted}]
Assume $G$ is $(36k^3 + 2k)$-strongly connected and $\zB$ is a bramble of depth two and size $>188k^3$ in $G$.
By Lemma \ref{lem:linkoneside}, there is a bramble $\zB_S\subseteq \zB$ such that $S$ is $(36k^3 + 2k)$-connected to $K_{\zB_S}$ in $G(\zB_S; \zB)$.
We have $|\zB_S| > 188k^4 - (144k^3 + 8k^2) > 36k^3$.
With a proof symmetric to that of Lemma \ref{lem:linkoneside}, one can show that there exists a bramble $\zB_T\subseteq \zB_S$ such that $T$ is $3k$-connected to $K_{\zB_T}$ in $G(\zB_T; \zB_S)$.
Moreover $|\zB_S| - |\zB_T| < 36k^3$.
The running time of $O(k^4n^2)$ follows from the running time bound given in Lemma \ref{lem:linkoneside}.
\end{proof}

\begin{proof}[Proof of Lemma \ref{lem:linkoneside}]

Let $G_0 = G(\zB; \zB)$ and let $K_0 = K_{\zB}$ be the clique of contracted vertices in $G_0$.
Set $M = 2k\alpha$.
To find the bramble $\zB_S$ we are looking for, we will generate a sequence of brambles, each contained in the previous one, until we find one that $S$ is sufficiently highly connected to for our purposes.
Consider Algorithm \ref{alg:cutsequence}, a procedure to find the sequence of brambles $\zB \supseteq \zB_1 \supseteq \dots \supseteq \zB_M$ and graphs $G_1, \dots, G_{M} = G(\zB_1;\zB),\dots, G(\zB_M;\zB)$, as well as separations $(A_i',B_i')$ and $(A_i,B_i)$ separating $S$ from the bramble in the graphs $G_{i-1}$ and $G_i$, respectively.

\begin{algorithm}[H]\label{alg:cutsequence}
\LinesNotNumbered
\DontPrintSemicolon
\For{$i = 1$ \KwTo $M-1$}{
Let $(A_i',B_i')$ be the separation in $G_{i-1}$ that properly separates $S$ from $K_{i-1}$ of minimum order. 

$C_i \longleftarrow K_{i-1} \cap (A_{i}' \cap B_{i}')$.\\ 
$K_{i} \longleftarrow K_{i-1} \setminus C_i$. \\

$\mathcal{C}_{i} \longleftarrow \{\im(v): v\in C_i\}$.\\
$\zB_i \longleftarrow \zB_{i-1} \setminus \zC_{i}$.\\
$G_i \longleftarrow G(\zB_i; \zB)$.\\

$A_{i} \longleftarrow (A_{i}' \setminus C_i )\cup (\cup_{C \in \mathcal{C}_{i} }C)$.\\
$B_{i} \longleftarrow (B_{i}' \setminus C_i) \cup (\cup_{C \in \mathcal{C}_i }C)$.\\
}
\caption{Generating the brambles $\zB_1, \dots, \zB_M$}

\end{algorithm}

We remark that for each $i$ $(A_i', B_i')$ is a separation in the graph $G_{i-1}$ while $(A_{i},B_{i})$ is a separation in $G_{i}$.
Obesrve that $(A_i', B_i')$ properly separates $S$ from $K_{i-1}$ and $(A_i, B_i)$ properly separates $S$ from $K_i$.
Observe also that $A_{i}\cap B_{i}$ is disjoint from $K_{i}$. 

Our main claim is the following.

\begin{claim}\label{clm:klink}
For some $1\leq m \leq M$, the separation $(A_m', B_m')$ has order at least $\alpha$.
\end{claim}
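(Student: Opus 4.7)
The natural approach is proof by contradiction: suppose $(A_m', B_m')$ has order strictly less than $\alpha$ for every $1 \leq m \leq M$, and derive a contradiction by combining the $\beta$-strong connectivity of $G$ with the size of $\zB$ and the bound $M = 2k\alpha$.

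The first ingredient I would use is a \emph{lift} of each small cut of $G_{m-1}$ back to a separation of the original graph $G$. Given $(A_m', B_m')$, set $A = (A_m' \setminus K_{m-1}) \cup \bigcup_{v \in K_{m-1} \cap A_m'} \im(v)$ and symmetrically $B$, handling each doubled vertex by splitting its two copies between $A$ and $B$ according to the bag assignments used in the construction of $G(\zB_{m-1};\zB)$; the result is a well-defined separation of $G$. Because $(A_m', B_m')$ properly separates $S$ from $K_{m-1}$, the set $K_{m-1} \setminus A_m'$ is nonempty, so at least one bag of $\zB_{m-1}$ sits entirely on the $B$-only side, making $(A,B)$ nontrivial in $G$. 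By $\beta$-strong connectivity its order is at least $\beta \geq \alpha$, while its ``uncontracted'' portion has order at most $|A_m' \cap B_m'| - |C_m| < \alpha$. Hence the bags in $C_m$ together contribute at least $\beta - \alpha + |C_m|$ vertices to the lifted separator, and in particular $|C_m| \geq 1$.

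The second ingredient is a monotonicity statement: the minimum order of a proper separation of $S$ from $K_m$ in $G_m$ should be strictly larger than that of the corresponding cut at the previous step. Morally, because $(A_m', B_m')$ was already minimum at step $m$, removing exactly those bags whose contracted vertices lay in its separator and then re-exposing their interiors forces any new small separator to pay at least one additional vertex. Combined with $|C_m| \geq 1$ from the lift, this produces a strictly increasing sequence of cut orders, so after at most $\alpha$ iterations the order must reach $\alpha$. Since $M = 2k\alpha \geq \alpha$, this contradicts the standing assumption that every $(A_m', B_m')$ has order $< \alpha$ and proves the claim.

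The main obstacle I expect is making the monotonicity rigorous through the doubling-and-contraction structure defining $G_m$: pulling a candidate small cut in $G_m$ back to one in $G_{m-1}$ requires carefully tracking which vertices reappear when the bags of $\zC_m$ are uncontracted, and accounting for the at-most-two bag memberships guaranteed by depth two. Lemma~\ref{lem:subsets}, applied to $S$ (viewed as a $k$-element set) and the collection of separator patterns it induces across successive steps, is the natural combinatorial lever for forcing the strict inequality, and I would expect it to play a decisive role precisely here.
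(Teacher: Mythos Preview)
Your proposal has a genuine gap at the monotonicity step, and this is not merely a technical detail to be filled in---it is the entire difficulty.

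The claim that the minimum order of a proper $S$--$K_m$ separation in $G_m$ strictly exceeds that in $G_{m-1}$ is unsupported and, as stated, need not hold. Passing from $G_{m-1}$ to $G_m$ does two things at once: it \emph{shrinks} the target (removing $C_m$ from the clique) and it \emph{uncontracts} the bags in $\zC_m$, reinserting all their internal vertices into the graph. Both moves can only make separating $S$ from the clique \emph{easier}: fewer targets to hit, and new internal bag vertices available as separator vertices. Nothing prevents the minimum cut at step $m{+}1$ from being the same size as, or smaller than, at step $m$. Your heuristic ``removing exactly those bags whose contracted vertices lay in its separator \ldots\ forces any new small separator to pay at least one additional vertex'' is not an argument; a fresh separator in $G_m$ may have nothing to do with the old one. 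Your lift showing $|C_m|\ge 1$ (which itself has a wrinkle: vertices lying in one bag of $\zB_{m-1}$ and one bag of $\zB\setminus\zB_{m-1}$ survive in $G_{m-1}$ yet belong to some $\im(v)$, so the lifted separator can pick up extra vertices even when $C_m=\emptyset$) gives no leverage on cut orders.

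The paper's proof is not a monotonicity argument at all; it is a submodular uncrossing between \emph{two} carefully selected cuts. First, a pigeonhole over the $M=2k\alpha$ steps produces $k{+}1$ indices $J$ such that for every $i<j$ in $J$ the separator $A_j'\cap B_j'$ avoids every bag in $\zC_i$; this is where the factor $2k\alpha$ (not merely $\alpha$) is spent, since each separator of size $<\alpha$ rules out at most $2\alpha$ earlier indices. Second, Lemma~\ref{lem:subsets} is applied to the $k{+}1$ proper subsets $S\cap A_\ell\cap B_\ell$ of $S$ to find $i<j$ in $J$ whose traces do not cover $S$, which makes $(A_i\cap A_j')\setminus(B_i\cup B_j')$ nonempty. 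With these two properties, the ``intersection'' separation $(\bar A_i\cap A_j',\,\bar B_i\cup B_j')$ lives in $V(G)$, is nontrivial, and hence has order $\ge\beta$; the complementary submodular inequality then produces a separation in $G_{i-1}$ of order strictly less than $|A_i'\cap B_i'|$, contradicting the minimality of $(A_i',B_i')$. So Lemma~\ref{lem:subsets} is indeed decisive, but its role is to guarantee nontriviality of the intersected cut in $G$, not to drive any step-by-step increase.
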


Let's first check that Claim \ref{clm:klink} implies the lemma.
Using Algorithm \ref{alg:cutsequence}, we can find $G_{m-1}$ and the separation $(A_m', B_m')$ given by Claim \ref{clm:klink} in time $O(Mn^2) = O(k\alpha n^2)$.
Take $\zB_S = \zB_{m-1}$.
In each loop of Algorithm \ref{alg:cutsequence}, we remove less than $2\alpha$ bags from the bramble.
It follows that $|\zB_{m-1}| > |\zB|-2M\alpha \geq |\zB|- 4k\alpha^2 $.

To prove Claim \ref{clm:klink} we need the following intermediate claim.
Suppose for the sake of contradiction that $|A_m'\cap B_m'| <\alpha$ for each $1\leq m \leq M$. 

\begin{claim}\label{clm:goodcuts}
We can find indices $1\leq i < j \leq M$ with 
\begin{enumerate}
\item $A_{j}'\cap B_{j}' \cap C = \emptyset$, for each $C\in \mathcal{C}_i$ and with
\item  $(A_i \cap A_j') \setminus (B_i \cap B_j')$ nonempty.
\end{enumerate}
\end{claim}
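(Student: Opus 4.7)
My plan is to split Claim~\ref{clm:goodcuts} into its two conditions. I expect condition (2) to hold automatically for \emph{every} pair $i < j$, directly from the hypothesis $\zB \cap (S \cup T) = \emptyset$ and the fact that the separations $(A_i', B_i')$ are proper. Condition (1) is the substantive requirement, and will fall out from a counting argument exploiting the depth-two structure of $\zB$. The real care needed is bookkeeping: several sets live in several different graphs ($G_i$ and $G_{j-1}$), and one must keep straight which vertices are clique vertices, doubled copies, or original vertices of $G$.

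For condition~(2), since $S$ is disjoint from every bag of $\zB$ and from every clique vertex, $S$ is preserved (neither contracted nor doubled) in every $G_m$, and in particular $S \subseteq V(G_i) \cap V(G_{j-1})$. Comparing the definitions of $A_i, B_i$ and $A_i', B_i'$ in Algorithm~\ref{alg:cutsequence}, the symmetric differences $A_i \triangle A_i'$ and $B_i \triangle B_i'$ are contained in $C_i \cup \bigcup_{C \in \mathcal{C}_i} C$, which is disjoint from $S$; hence $S \cap B_i = S \cap B_i'$, and also $S \subseteq A_i \cap A_j'$ (since $S$ lies on the $A$-side of both separations by definition). The proper-separation property of $(A_i', B_i')$ now produces an $s \in S \setminus B_i' = S \setminus B_i$, and this $s$ lies in $(A_i \cap A_j') \setminus (B_i \cap B_j')$.

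For condition~(1), fix $j$ and count the bad indices $i < j$, meaning those for which some $C \in \mathcal{C}_i$ meets $A_j' \cap B_j'$. Any witness vertex $v \in (A_j' \cap B_j') \cap C$ must lie in $V(G)$ because each bag $C = \im(u) \subseteq V(G)$. The paper's construction additionally guarantees $V(G_{j-1}) \setminus K_{j-1} \subseteq V(G)$, so the set of possible witnesses is exactly $(A_j' \cap B_j') \cap V(G)$. Since $\zB$ has depth two and the families $\mathcal{C}_1, \dots, \mathcal{C}_{j-1}$ are pairwise disjoint subfamilies of $\zB$, each such $v$ is in at most two of the $\mathcal{C}_i$. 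The contradiction hypothesis $|A_j' \cap B_j'| < \alpha$ therefore bounds the number of bad $i$ by $2(\alpha - 1)$.

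To conclude, take $j = M = 2k\alpha$: there are $M - 1 = 2k\alpha - 1$ candidate indices $i$, of which at most $2(\alpha - 1)$ are bad for condition~(1), so (using $k \geq 1$) at least one good $i$ remains. Pairing such an $i$ with $j = M$ gives the required pair. The main (mild) obstacle is the vertex-identification bookkeeping underlying $V(G_{j-1}) \setminus K_{j-1} \subseteq V(G)$ and the pairwise disjointness of the $\mathcal{C}_i$ as subfamilies of $\zB$; once these are in hand, both conditions essentially drop out.
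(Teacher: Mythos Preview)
Your proof is correct for the claim as stated, and it is genuinely simpler than the paper's. The paper first runs a greedy procedure (its Algorithm~\ref{alg:findindices}) to build a set $J$ of $k+1$ indices such that condition~(1) holds for \emph{every} pair from $J$, and then applies Lemma~\ref{lem:subsets} (about $k+1$ proper subsets of the $k$-element set $S$) to the sets $S\cap B_\ell$, $\ell\in J$, in order to extract a pair from $J$ satisfying~(2). You shortcut both steps: you observe that condition~(2) already holds for \emph{every} pair $i<j$ just from the properness of $(A_i',B_i')$, so a single pass of the depth-two counting argument with $j=M$ suffices for~(1).

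One caveat is worth flagging. The paper's route via Lemma~\ref{lem:subsets} actually produces an $s\in S$ lying outside $B_i\cup B_j'$, and hence the stronger conclusion $(A_i\cap A_j')\setminus(B_i\cup B_j')\neq\emptyset$. This stronger form (with $\cup$ rather than $\cap$) is what is invoked in the proof of Claim~\ref{clm:klink} to certify that the separation $(A_i\cap A_j',\,V(G)\setminus((A_i\cap A_j')\setminus(B_i\cup B_j')))$ is nontrivial in $G$; condition~(2) as written appears to be a typo. Your argument yields only $s\notin B_i$, so while it proves Claim~\ref{clm:goodcuts} exactly as stated, it does not supply what the proof of Claim~\ref{clm:klink} actually uses; for that you would still need the $k+1$ indices and Lemma~\ref{lem:subsets}.
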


\begin{proof}
We first obtain a set of $k+1$ indices $J\subset [M]$ such that for each $i<j \in J$ we have $A_{j}'\cap B_{j}' \cap C = \emptyset$, for each $C\in \mathcal{C}_i$.
Algorithm \ref{alg:findindices} gives a procedure to find $J$.

\begin{algorithm}[H]\label{alg:findindices}
$J_1 \longleftarrow \{M\}$\\
$M_1 \longleftarrow [M]\setminus (J_1\cup \{i: \exists C\in \mathcal{C}_i \mbox{ with } A_{M}'\cap B_{M}' \cap C \neq \emptyset \})$\\
\For{i=2 \KwTo k}{ 
$J_i \longleftarrow J_{i-1}\cup \{\max {\ell}: \ell \in M_{i-1} \}$\\
$M_i \longleftarrow M_{i-1}\setminus (J_i\cup \{i: \exists C\in \mathcal{C}_i \mbox{ with } A_{\ell}'\cap B_{\ell}' \cap C \neq \emptyset \mbox{ for some } \ell \in J_i \} )$\\
}
$J = J_{k+1} \longleftarrow J_{k} \cup \{\max {\ell}:  \ell \in M_{i-1} \}$\\
\caption{Generating the set $J$}
\end{algorithm}

Let us first check that $J$ is well-defined.
Observe that $|M_1| > M-2\alpha$, since the sets $\{\mathcal{C}_{\ell}: \ell\in [M]\}$ correspond to distinct sets of bags of $\zB$ and no vertex of $A_M'\cap B_M'$ belongs to more than two bags. (Recall that $|A_M'\cap B_M'|< \alpha$.)
Similarly $|M_i| \geq |M_{i-1}| - 2\alpha$ for each $i\geq 2$.
Since $M = 2k\alpha$, we have $|M_k| > M - 2k\alpha \geq 1$.
Thus $|J| = k+1$.

Now let us check that any pair of indices $i<j \in J$ satisfies the first property in the claim, that is that $A_{j}'\cap B_{j}' \cap C = \emptyset$ for each $C\in \mathcal{C}_i$. 
We will show this holds for $J_1, \dots, J_{k+1}=J$ inductively. 
Since $|J_1|=1$, the base case holds trivially. 
Assuming the property holds for $J_{\ell-1}$, let us show it holds for $J_{\ell}$. Observe that $M_{\ell-1} \subset M_{\ell-2} \subset \dots \subset M$. The unique element $i \in J_{\ell}\setminus J_{\ell-1}$ is the smallest index in $J_{\ell}$, so we just need to show that for each $j\in J_{\ell-1}$ we have $A_{j}'\cap B_{j}' \cap C = \emptyset$ for each $C\in \mathcal{C}_{i}$. But $i\in M_{j-1}$ for each $j$ so this is true by the choice of $i$.

Now, we want two indices $i<j \in J$ satisfying the second property in the claim.
For each $\ell \in J$, recall that $S \subseteq A_{\ell}'$ and $S\subseteq A_{\ell}$ and that $S\cap A_{\ell}'\cap B_{\ell}' = S\cap A_{\ell}\cap B_{\ell} \neq S$.
It follows from Lemma \ref{lem:subsets} that there exist indices $i < j$ with $(S\cap A_i\cap B_i) \cup (S\cap A_j'\cap B_j') \neq S$.
Since $S \subseteq A_i\cap A_j'$, it follows that $(A_i \cap A_j') \setminus (B_i \cap B_j')$ nonempty.

This proves Claim \ref{clm:goodcuts}.

\end{proof}

We can now prove Claim \ref{clm:klink}.

\begin{proof}[Proof of Claim \ref{clm:klink}]
Recall our assumption that $|A_m'\cap B_m'| <\alpha$ for each $1\leq m \leq M$. 
Fix $i$ and $j$ as in Claim \ref{clm:goodcuts}.
The separation $(A_j',B_j')$ is a separation in $G_{j-1}$.
Let $\bar{A_i} = A_i$ and let $\bar{B_i} = B_i\setminus (K_j\setminus K_i) \cup \{C: C\in \mathcal{C}_{\ell}: i< \ell < j \}$.
Observe that $(\bar{A_i}, \bar{B_i})$ is also a separation in $G_{j-1}$ and $(\bar{A_i} \cap \bar{B_i})$ does not meet $K_{j-1}$. 
Further, since each $C\in \mathcal{C}_i$ is strongly connected and meets $B_j'$ and, by the choice of $i$ and $j$, 
doesn't meet $A_j'\cap B_j'$, we have that each $C\in \mathcal{C}_i $ belongs to $ \bar{B_i}\setminus A_j'$. 
This implies that 
$$(\bar{A_i}\cap \bar{B_i})\cap A_j' = (A_i\cap B_i)\cap A_j' =  (A_i'\cap B_i')\cap A_j'.$$
Similarly, $$(A_j' \cap B_j') \setminus \bar{B_i} = (A_j' \cap B_j') \setminus B_i = (A_j' \cap B_j') \setminus B_i'.$$

Now, $\bar{A_i}\cap A_j' \subseteq V(G)$, that is, it contains no contracted vertices in $G_{j-1}$.
Thus, while $(\bar{A_i}\cap A_j', \bar{B_i}\cup B_j')$ is a separation in $G_{j-1}$, we may view it as a separation in $G$, namely as $(A_i\cap A_j', V(G)\setminus (A_i\cap A_j' \setminus B_i\cup B_j'))$.
By the second property in Claim \ref{clm:goodcuts}, this separation is nontrivial.
So, by the strong connectivity of $G$ it must have order at least $\beta\geq \alpha$ and we deduce that $|(A_i\cap A_j') \cap (B_i\cup B_j')| \geq \beta$.
Rewriting $|(A_i\cap A_j') \cap (B_i\cup B_j')|$ as $|(A_i\cap B_i) \cap A_j'| + |(A_j' \cap B_j') \setminus B_i|$ we conclude that 
$$|(A_i'\cap B_i')\cap A_j'| + |(A_j' \cap B_j') \setminus B_i'| \geq \beta.$$

We have $|A_i' \cap B_i'| < \alpha$ and $|A_j' \cap B_j'| < \alpha$ by assumption, so we can assume that 
\begin{equation}\label{eqn:modular}
|(A_i'\cap B_i')\setminus A_j'| + |(A_j' \cap B_j') \cap B_i'|  < |A_i' \cap B_i'|.
\end{equation}

Now, let us turn our attention to the graph $G_{i-1}$ and define 
$$A_j^* = (A_j' \cap V(G_{i-1})) \cup \{u: u\in K_{\ell} \mbox{ and } \im(u)\cap A_j'\neq \emptyset \mbox{ for some } j> \ell \geq i\} $$ and 
$$B_j^* = (B_j' \cap V(G_{i-1})) \cup \{u: u\in K_{\ell} \mbox{ and } \im(u)\cap A_j'\neq \emptyset \mbox{ for some } j> \ell \geq i\} $$
In other words we obtain $A_j^*$ and $B_j^*$ from $A_j'$ and $B_j'$ by replacing any vertices belonging to bags $\zB$ that were `expanded' to obtain the intermediate graphs $G_i,\dots, G_{j-1}$ by the corresponding contracted vertices.
Note that $A_j^* \cap A_i' = A_j' \cap \bar{A_i}$.
We view $(A_j^*,B_j^*)$ as a separation in $G_{i-1}$ and note that 
$$|(A_j^* \cap B_j^*) \cap B_i'| \leq |(A_j' \cap B_j') \cap B_i'|$$ and
$$|(A_i'\cap B_i')\setminus A_j^*| \leq |({A_i}\cap {B_i})\setminus A_j'|.$$

Consider the separation $(A_i'\cup A_j^*, B_i'\cap B_j^*)$ in $G_{i-1}$.
Its order is $|(A_i'\cap B_i')\setminus A_j^*| + |(A_j^* \cap B_j^*) \cap B_i'| \leq |(A_i'\cap B_i')\setminus A_j'| + |(A_j' \cap B_j') \cap B_i'| < |A_i'\cap B_i'|$ by (\ref{eqn:modular}). This contradicts the minimality of $(A_i', B_i')$ in $G_{i-1}$.

This proves Claim \ref{clm:klink}.
\end{proof}
This completes the proof of the theorem.
\end{proof}


\section{Proofs of Theorems \ref{thm:linked} and \ref{thm:2kconn}}\label{sec:proofs}

Given Theorems \ref{thm:findbramble} and \ref{thm:main}, it is now easy to complete the proofs of Theorems \ref{thm:linked} and \ref{thm:2kconn}.  We begin with Theorem \ref{thm:linked}.

\begin{proof}[Proof of Theorem \ref{thm:linked}]  Let $f_1$ be the function from Theorem \ref{thm:findbramble}.  Let $t = t(k)$ be the value necessary for the size of the bramble in order to apply Theorem \ref{thm:main} and resolve an instance of $\frac12$kDDPP.  

Let $G$ be an $f(t)$-strongly connected graph on $n$ vertices, and let $(G, S = ( s_1, \dots, s_k), T=( t_1, \dots, t_k))$ be an instance of the $\frac12$kDDPP.  We can greedily find a path $P$ with $|V(P)| \ge f(k)$.  Note that any subset of at most $f(k)$ vertices is well-linked, and thus, $V(P)$ is a well-linked set.  By Theorem \ref{thm:findbramble}, we can find in time $O(n^{c_1})$ a bramble $\zB$ of size at least $t$.  As $f(t) \ge 36k^3 + 2k$, by Theorem \ref{thm:main}, we can find a solution to $(G, S, T)$ in time $O(k^4n^2)$, completing the proof of the theorem. 
\end{proof}

For the proof of Theorem \ref{thm:2kconn}, we will need two additional results from \cite{KKfull}.  Note that in \cite{KKfull}, neither statement is algorithmic, but the existence of the algorithm follows immediately from the constructive proof.

\begin{LE}[\cite{KKfull}, 4.3]\label{lem:KK1}
Let $G$ be a directed graph on $n$ vertices and $\zB$ a bramble in $G$.  Then there is a path $P$ intersecting every element of $\zB$ and given $G$ and $\zB$ in input, we can find the path $P$ in time $O(|\zB|n^2)$.
\end{LE}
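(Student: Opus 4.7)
The plan is a greedy inductive construction. Fix any enumeration $B_1, \ldots, B_t$ of the bags of $\zB$. I build a sequence of directed simple paths $P_1, P_2, \ldots, P_t$ and take $P := P_t$, maintaining the invariant that $P_i$ intersects each of $B_1, \ldots, B_i$ and that its last vertex $v_i$ lies in $B_i$. Initialize $P_1$ to be any single vertex of $B_1$.

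For the inductive step, I extend $P_i$ to $P_{i+1}$. Since $\zB$ is a bramble, $B_i$ and $B_{i+1}$ touch, so either $B_i \cap B_{i+1} \neq \emptyset$ or there exist arcs in both directions between them. In the first case, strong connectivity of $B_i$ gives a directed walk in $B_i$ from $v_i$ to a chosen vertex $w \in B_i \cap B_{i+1}$. In the second case, strong connectivity of $B_i$ provides a walk from $v_i$ to the tail of some $(B_i,B_{i+1})$-arc, followed by the arc itself, landing inside $B_{i+1}$. In either case I obtain a directed walk $W$ from $v_i$ to some vertex $v_{i+1} \in B_{i+1}$ whose internal vertices lie in $B_i \cup B_{i+1}$. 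If $W$ is internally disjoint from $V(P_i)\setminus\{v_i\}$, I simply concatenate to obtain $P_{i+1}$.

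The main obstacle is that $W$ may revisit vertices of $P_i$, so the concatenation is merely a walk and must be turned into a simple path while preserving the invariant. My plan here is to apply a shortcutting argument: let $z$ be the last vertex of $W$ that also lies on $P_i$; take the prefix of $P_i$ ending at $z$, and splice on the subwalk of $W$ from $z$ to $v_{i+1}$. The result is a simple directed path. To check that the invariant is preserved, observe that every vertex of $W\setminus\{v_i\}$ lies in $B_i\cup B_{i+1}$, so any portion of $P_i$ discarded by the shortcut is contained in $B_i \cup B_{i+1}$; hence no bag other than possibly $B_i$ can be ``lost'', and since $z \in B_i\cup B_{i+1}$ one can further route $W$ within $B_i$ (again using strong connectivity) to pass through a witness vertex for $B_i$ before entering $B_{i+1}$. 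The one subtle case arises if some earlier bag $B_j$ ($j<i$) is touched by $P_i$ only through a vertex shared with $B_i$ that lies in the discarded suffix; one handles this by choosing the walk $W$ to route inside $B_i$ through the relevant shared vertex prior to reaching $B_{i+1}$, which is possible because $B_i$ is strongly connected.

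For the running time, each extension step requires: (i) testing whether $B_i \cap B_{i+1} \neq \emptyset$ or searching for a $(B_i, B_{i+1})$-arc, (ii) finding a directed walk in the strongly connected subgraph induced by $B_i \cup B_{i+1}$, and (iii) performing the shortcut. All three tasks are carried out by straightforward BFS/DFS in time $O(n^2)$. Over the $t = |\zB|$ extensions this yields a total running time of $O(|\zB|\,n^2)$, as claimed.
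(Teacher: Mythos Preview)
The paper does not give its own proof of this lemma; it is quoted from \cite{KKfull} without argument, so there is nothing in the paper to compare your attempt against directly.

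Your overall greedy scheme is the standard one, but the crucial shortcutting step contains a genuine gap. You assert that ``every vertex of $W\setminus\{v_i\}$ lies in $B_i\cup B_{i+1}$, so any portion of $P_i$ discarded by the shortcut is contained in $B_i\cup B_{i+1}$.'' This is a non sequitur: the discarded set is the suffix $P_i[z\to v_i]$ of $P_i$, not a subset of $W$, and nothing you have said forces that suffix to lie in $B_i\cup B_{i+1}$. The path $P_i$ was assembled over many earlier stages and its tail between $z$ and $v_i$ may pass through arbitrary vertices of $G$, in particular through the unique witness for some bag $B_j$ with $j<i$. Your patch---rerouting $W$ inside $B_i$ so as to visit such a witness---cannot work in general, because a lost witness for $B_j$ need not lie in $B_i$ at all.

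The repair is to reason about the structure of $P_i$ rather than of $W$. If at each step you choose $B_{i+1}$ to be a bag \emph{not} yet hit (reordering as needed), then by induction $P_i$ decomposes as a prefix $P_{i-1}'$ of $P_{i-1}$ (still hitting $B_1,\dots,B_{i-1}$) followed by a segment $R_i\subseteq B_{i-1}\cup B_i$ ending at $v_i$. Since $z\in B_i$ while $V(P_{i-1}')\subseteq V(P_{i-1})$ is disjoint from $B_i$, the shortcut vertex $z$ must land in $R_i$; hence the discarded suffix $P_i[z\to v_i]$ is contained in $B_{i-1}\cup B_i$, and both $B_{i-1}$ and $B_i$ remain hit (at the end of $P_{i-1}'$ and at $z$, respectively). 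Carrying this decomposition as the inductive invariant makes the argument go through and yields the stated running time.
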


\begin{LE}[\cite{KKfull}, 4.4]\label{lem:KK2}
Let $G$ be a directed graph graph on $n$ vertices, $\zB$ a bramble of order $k (k + 2)$ and $P$ a path intersecting every element of $\zB$.  Then there exists a set $X \subseteq V(P)$ of order $4k$ which is well-linked. Given $P$, $\zB$, and $G$ in input, we can algorithmically find $X$ in time $|\zB|n^{O(k)}$.  
\end{LE}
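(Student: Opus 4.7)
The plan is iterative refinement: maintain a candidate $X \subseteq V(P)$ of size $4k$, certify well-linkedness via Menger's theorem, and use the bramble $\zB$ to replace a portion of $X$ when certification fails. For the initial $X$, take any $4k$ vertices of $P$, say the first $4k$ in the traversal of $P$. To test well-linkedness, enumerate all pairs of equal-size subsets $U_1, U_2 \subseteq X$ (at most $2^{8k}$ pairs) and apply Theorem~\ref{thm:menger} to compute a maximum $U_1$-to-$U_2$ linkage, which takes $O(n^2)$ time per pair. If every such linkage has size $|U_1|$, then $X$ is well-linked and we return it.

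If certification fails, Menger produces a separation $(A,B)$ with $U_1 \subseteq A\setminus B$, $U_2 \subseteq B\setminus A$, and $|A\cap B| < |U_1| \le 2k$. Because $|A\cap B| < 2k \le k(k+2)$, the set $A\cap B$ is not a cover of $\zB$, so some bag of $\zB$ is disjoint from $A\cap B$. Such a bag is strongly connected and therefore lies entirely in $A\setminus B$ or entirely in $B\setminus A$. The touching property of brambles, combined with the absence of edges from $A\setminus B$ to $B\setminus A$ across the separation, forces \emph{every} bag disjoint from $A\cap B$ onto a common side; without loss of generality this side is $A\setminus B$. The sub-collection of bags contained in $A\setminus B$ is then a sub-bramble $\zB_A \subseteq \zB$ of order at least $k(k+2) - |A\cap B| > k^2$. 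Since $P$ meets every bag of $\zB_A$, for each bag we can pick a representative vertex of $P$ inside it, and we replace every vertex of $X \cap (B\setminus A)$ (which contains $U_2$) by such representatives, keeping $|X|=4k$ and $X \subseteq V(P)$.

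Termination uses a potential on $X$. A natural choice is the sorted tuple of $P$-positions of the elements of $X$, compared lexicographically; the replacement above is chosen so that the new tuple is strictly smaller. Because the potential takes at most $n^{O(k)}$ distinct values, the procedure terminates in $n^{O(k)}$ iterations. Each iteration costs $2^{O(k)} n^2$ for the Menger calls, while a one-time preprocessing pass of $O(|\zB| n)$ suffices to record which bags of $\zB$ each vertex of $P$ lies in, giving the claimed total running time of $|\zB|\, n^{O(k)}$.

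The principal obstacle is ensuring that the replacement in the second step can always be made to strictly decrease the potential. Directed separations are asymmetric — edges are forbidden from $A\setminus B$ to $B\setminus A$ but are allowed in the opposite direction — so, unlike in the undirected case, we cannot freely swap the roles of the two sides. In particular, $\zB_A$ might lie ``after'' the vertices of $X\cap(B\setminus A)$ along $P$, in which case the naive lexicographic potential would increase rather than decrease. Handling this requires either a more refined potential (for example indexed by which bags of $\zB$ have been ``visited'' by $X$ rather than by raw positions on $P$), or preprocessing via Lemma~\ref{lem:KK1} applied to $\zB_A$ to obtain an auxiliary path that visits the relevant bags in a favourable order; working out this termination argument carefully is the technical heart of the proof.
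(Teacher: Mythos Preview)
The paper does not prove this lemma; it is quoted from \cite{KKfull} (Lemma~4.4 there) and used as a black box, with only the remark that the algorithmic version follows from the constructive proof in that reference. So there is no in-paper proof to compare your attempt against.

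On the substance of your proposal: you correctly identify the gap yourself. The iterative scheme is natural and the first steps (Menger test, uncovered bag, all uncovered bags forced to one side of the separation) are fine, but termination is not a detail left to fill in---it is the entire content of the argument. The lexicographic potential on $P$-positions does not work, as you note, because directedness gives you no control over where the surviving sub-bramble $\zB_A$ sits along $P$ relative to the discarded vertices $X\cap(B\setminus A)$. Your suggested fixes (a bramble-indexed potential, or re-applying Lemma~\ref{lem:KK1} to $\zB_A$) are not worked out, and it is not clear either can be made to terminate in $n^{O(k)}$ steps without essentially rebuilding the construction from scratch at each iteration.

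For contrast, the argument in \cite{KKfull} is a direct, one-pass construction with no iteration: traverse $P$ and greedily cut it into consecutive segments, making a cut each time the bags of $\zB$ met only by the current segment already require a cover of size exceeding roughly $k$; the order hypothesis $k(k+2)$ then guarantees enough segments, and $X$ is taken to be the segment boundary vertices. Well-linkedness follows directly, because any small separator between two subsets of $X$ would leave the sub-bramble owned by some entire segment uncovered. Your approach discards the linear structure of $P$ after the initial choice of $X$, and that structure is precisely what the standard proof exploits to avoid the convergence problem you are stuck on.
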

  
\begin{proof}[Proof of Theorem \ref{thm:2kconn}]  Let $(G, S = (s_1, \dots, s_k), T = (t_1, \dots, t_k))$ be an instance of the $\frac12$kDDPP.  Let $n = |V(G)|$.  Let $t$ be the necessary size of a bramble in order to apply Theorem \ref{thm:main} to resolve an instance of the $\frac12$kDDPP.  Let $f$ be the function in Theorem \ref{thm:findbramble}.  

By Theorem \ref{thm:brambleortd}, we can either find a tree decomposition of $G$ of width at most $c_2((f(t)+2)^2)$ or a bramble $\zB$ of order $(f(t)+2)^2$.  Given the tree decomposition, by Corollary \ref{cor:solveboundedtw}, we can solve $(G, S, T)$ in time $O(n^{f_1(c_2(f(t)+2)^2)})$ for some function $f_1$.  

If instead with find the bramble $\zB$, in order to apply Theorem \ref{thm:main}, we will have to convert it to a bramble of depth two.  By Theorem \ref{thm:brambleortd}, we may assume that $|\zB| \le n^{2(f(t)+2)^2}$.  Thus, in time $n^{O(f(t)^2)}$, we can find a  path $P$ intersecting every element of $\zB$ by Lemma \ref{lem:KK1}.  By Lemma \ref{lem:KK2}, again in time $n^{O(f(t)^2)}$, we can find a well-linked subset $X \subseteq V(P)$ with $|X| \ge f(t)$.  Finally, applying Theorem \ref{thm:findbramble}, we find can find a bramble $\zB'$ of size $t$ and depth two.  Finally, by Theorem \ref{thm:main}, we can resolve $(G, S, T)$ in time $O(k^4n^2)$.  In total, the algorithm takes time $O(n^{f_2(k)})$ for some function $f_2$, as desired.  
\end{proof}

\section{Lower Bounds}\label{sec:lowerbounds}
%



In this section, we give the proof of Theorem \ref{thm:hardness}. 

\begin{proof}[Proof of Theorem \ref{thm:hardness}]
We give a reduction from the satisfiablity problem for Boolean formulas in $3$-CNF (3SAT).
The reduction is a variant on the one given in \cite{FHW} which shows NP-completeness of the integral $2$DDPP in directed acyclic graphs.

Fix a formula $F$ with variables $x_1,\dots,x_n$ and clauses $c_1,\dots,c_m$.
Set $k' = 3 + m = n$, choose $M \in \{ \lceil\tfrac{\epsilon}{1-\epsilon}k' \rceil, \lceil\tfrac{\epsilon}{1-\epsilon}k' \rceil + 1 \}$ so that $M$ is even, and set $k = k' + M$. 
Noting that $M \geq \epsilon k$, we will construct a $k$-linkage problem instance with strong connectivity $M$.
We construct the graph $G(F) = (V,E)$ as follows (see Figure \ref{fig:reduction} for an example).
Let 
$$V_1 = \{v_{i,j} : x_i\in c_j\} \cup \{\obar{v_{i,j}}: \obar{x_i}\in c_j\} \cup \{c_j, c_j': 1\leq j \leq m\} \cup \{s_1,s_2,s_3,t_1,t_2,t_3\} \cup \{u_i,v_i, \obar{u_i}, \obar{v_i}\} \cup \{b_i, b_i' ; 1 \leq i \leq n\} $$
and $W = \{w_1,\dots,w_M\}$.
Let $V = V_1 \cup W$.

For each variable $i$, create a directed path $P_i$ from $u_i$ to $v_i$ containing internally each vertex of the form $v_{i,j}$, and a directed path $\obar{P_i}$ from $\obar{u_i}$ to $\obar{v_i}$ containing each vertex of the form $\obar{v_{i,j}}$.

\begin{figure}[t]
\begin{center}
\scalebox{.5}{\input{./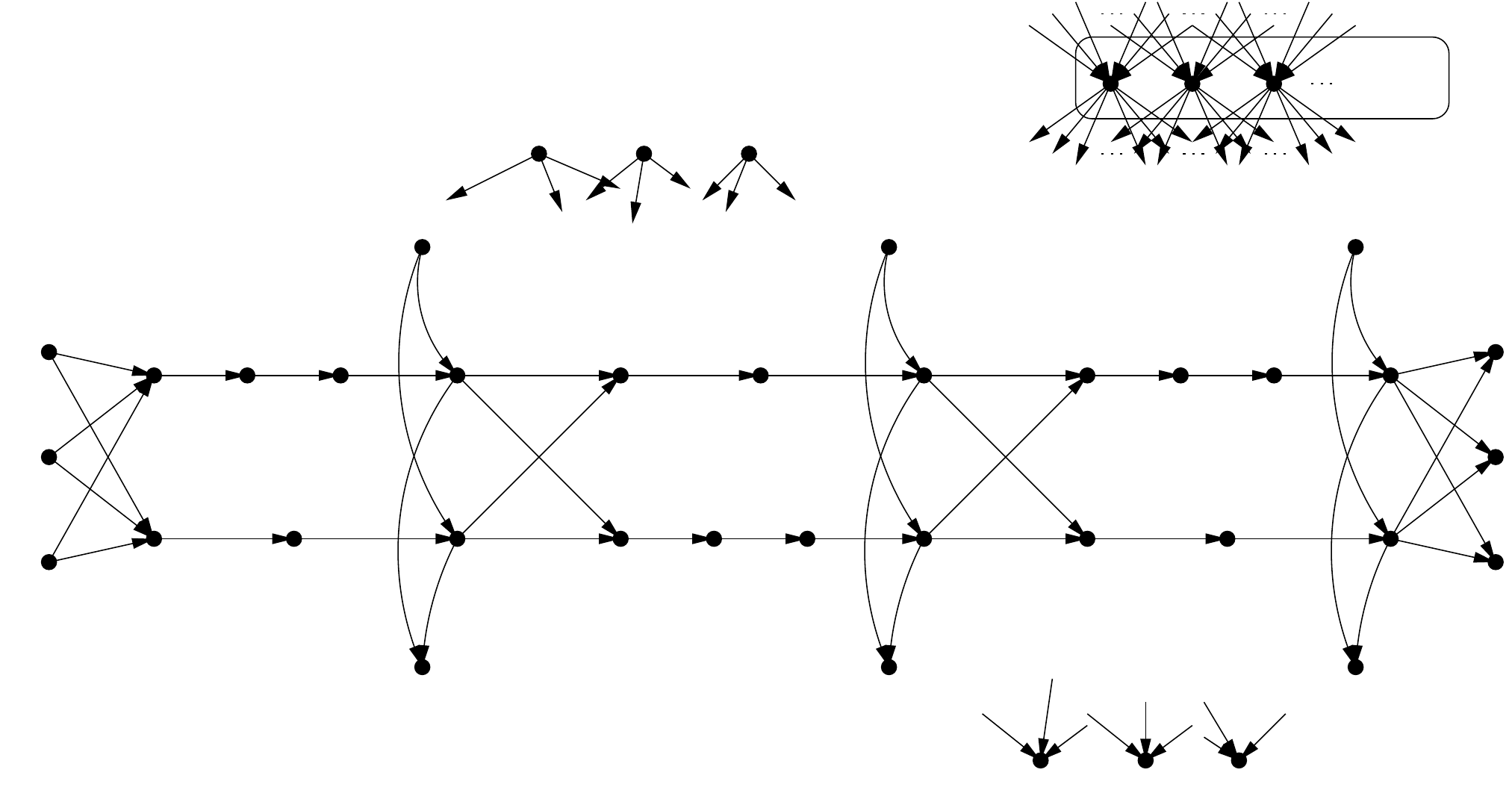_t}}
\end{center}
\caption{Illustration of the graph $G(F)$ for the formula 
$F = (x_1\lor x_2\lor x_3)\land (x_1\lor \obar{x_2}\lor x_3)\land (\obar{x_1}\lor \obar{x_2}\lor \obar{x_3}) $}
\label{fig:reduction}
\end{figure}

Let 
\begin{align*}
E_1 &= \{E(P_i) \cup E(\obar{P_i}) : 1\leq i \leq n\}; \\
E_2 &= \{(v_i, u_{i+1}), (v_i, \obar{u_{i+1}}), \obar{v_i}, u_{i+1}), (\obar{v_i}, \obar{u_{i+1}}): 1\leq i < n \}; \\
E_3 &= \{(c_j, v_{i,j}), (v_{i,j}, c_j') : v_{i,j}\in V_1\} \cup \{(c_j, \obar{v_{i,j}}), (\obar{v_{i,j}}, c_j'): \obar{v_{i,j}}\in V_1 \};\\
E_4 &= \{(b_i, v_i), (b_i, \obar{v_i}), (v_i, b_i), (\obar{v_i}, b_i)\};\\
E_5 &= \{ (s_i, u_1), (s_i, \obar{u_1}) : i\in \{1,2,3\} \} \cup \{ (v_n, t_i), (\obar{v_n}, t_i) : i\in \{1,2,3\} \};\\
E_6 &= \{ \{(w_i, v), (v, w_i): v\in V_1 \cup \{w_j: j<i\} \} : 1\leq i \leq M\}.
\end{align*}

Set $E = E_1\cup E_2 \cup E_3 \cup E_4 \cup E_5 \cup E_6$.
Observe that $G(F)$ has strong connectivity at least $M$, since each $w_i$ is adjacent to and from each other vertex.

Consider the $k$-linkage problem instance $(G(F),S,T)$ where 
$$S = \{s_1,s_2,s_3, c_1,\dots,c_m, b_1, \dots, b_n, w_1, \dots, w_{\tfrac M2}, w_{\tfrac M2 + 1}, \dots, w_M \}$$
and $$T = \{t_1,t_2,t_3, c_1',\dots,c_m', b_1',\dots, b_n', w_{\tfrac M2 + 1}, \dots, w_M, w_1, \dots, w_{\tfrac M2} \}.$$

This is clearly a polynomial reduction.
We need to show that $F$ is satisfiable if and only if $(G(F),S,T)$ is half-integrally feasible.

\begin{claim*}
If $F$ is satisfiable then $(G(F),S,T)$ is half-integrally feasible.
\end{claim*}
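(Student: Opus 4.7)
The plan is to exploit a satisfying assignment $\sigma$ of $F$ to define an explicit half-integral routing, handled in four groups. First, since $E_6$ makes the $w$-vertices pairwise bidirectionally adjacent, I route each $w$-terminal pair along the single direct edge joining its endpoints, so every $w$-vertex lies on exactly the two paths for which it is an endpoint. Second, call the \emph{true side} of variable $i$'s gadget the path $P_i$ when $\sigma(x_i)$ is true and $\obar{P_i}$ otherwise, the other being the \emph{false side}; I then route two of the three $s_i\to t_i$ spine paths through the false side of every gadget and the remaining one through the true side, using $E_2$ cross-edges to move between gadgets and $E_5$ at the endpoints. Consequently each true-side literal vertex picks up load $1$ and each false-side literal vertex picks up load $2$.

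Third, since $\sigma$ satisfies each clause $c_j$, there is a true literal whose vertex $\ell_{i,j}\in\{v_{i,j},\obar{v_{i,j}}\}$ lies on the true side of variable $i$; I route $c_j\to c_j'$ as $c_j\to\ell_{i,j}\to c_j'$ via $E_3$, which raises the load at $\ell_{i,j}$ from $1$ to $2$. Fourth, I route each $b_i\to b_i'$ path using the $E_4$ adjacencies, choosing the side on which to enter the variable gadget so that the combined spine-plus-$b$ load stays within the permitted $2$ everywhere, and so that no literal vertex already at load $2$ from the previous steps is revisited.

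The verification reduces to checking that every vertex has load at most $2$: the source/sink terminals have load $1$; the $w$-vertices have load exactly $2$; each literal vertex has load at most $2$; and the port vertices $u_i,v_i,\obar{u_i},\obar{v_i}$ are similarly bounded. The principal obstacle is the compatibility of the third and fourth steps with the two-to-one spine split of the second step. This is exactly where the satisfying hypothesis is essential, because otherwise some clause would be forced to land its short path on a false-side literal vertex already at load $2$, producing a forbidden third use.
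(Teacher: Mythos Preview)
Your proposal is correct and follows essentially the same approach as the paper: route the $w$-pairs along single edges, send the three spine paths through the variable gadgets so that each true side carries load $1$ and each false side load $2$, route each clause through a true literal vertex, and route each $b_i$--$b_i'$ path through the true-side endpoint $v_i$ or $\obar{v_i}$. The only cosmetic differences are that the paper fixes $s_1$ on the positive paths and $s_2$ on the negative paths (with $s_3$ tracking the false side) rather than sending two fixed paths along the false side, and that your remark about the $b_i$ path avoiding load-$2$ \emph{literal} vertices is superfluous since $b_i$ paths only touch the endpoint vertices $v_i,\obar{v_i}$.
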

Fix a satisfying assignment of $F$ and consider the $S$-$T$ linkage consisting of 
\begin{itemize}
	\item the paths $s_1P_1\dots P_n t_1$, $s_2\obar{P_1}\dots \obar{P_n} t_2$;
	\item the $s_3-t_3$ path whose interior is the concatenation of $P_i$ if $x_i$ is set to false or $\obar{P_i}$ if $x_i$ is set to true, for each $1\leq i \leq n$;	
	\item for each $1\leq j \leq m$, a path $c_j v_{i,j} c_j'$ if $c_j$ contains a variable $x_i$ that is true, or the path $c_j \obar{v_{i,j}} c_j'$ if $c_j$ contains the negation of a variable $x_i$ that is false;
	\item for each $1\leq i\leq n$, the path $b_i v_i b_i'$ if $x_i$ is true, or the path $b_i \obar{v_i} b_i'$ if $x_i$ is false;
	\item the 1-edge paths $(w_i,w_{\tfrac M2 + i})$ and $(w_{\tfrac M2 + i}, w_i)$ for each $1\leq i \leq \tfrac M2$.
\end{itemize}

It is straightforward to check that each vertex is used at most twice in this linkage, proving that $(G(F),S,T)$ is half-integrally feasible.

\begin{claim*}
If $(G(F),S,T)$ is half-integrally feasible then $F$ is satisfiable.
\end{claim*}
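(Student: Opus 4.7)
The plan is to reverse engineer a satisfying assignment of $F$ from a given half-integral solution. First, observe that each $w_i$ is an endpoint of exactly two paths in the linkage; for $1 \le i \le M/2$, the interleaved orders of the $w$-blocks in $S$ and $T$ pair $w_i$ with $w_{M/2+i}$ in both directions. Hence each $w_i$ is already used twice just as an endpoint, and by half-integrality no other path in the linkage may use any $w_i$. Consequently, the remaining $3+m+n$ paths (the $s_\ell$-$t_\ell$, $c_j$-$c_j'$ and $b_i$-$b_i'$ paths) live entirely in the acyclic digraph $G(F)-W$.

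Next I would analyze the shape of each remaining path in this DAG. The vertices $c_j$ and $b_i$ have no incoming edges in $G(F)-W$, while $c_j'$ and $b_i'$ have no outgoing edges, so each is used only by its own terminal pair's path. Since the only out-neighbors of $b_i$ are $v_i,\obar{v_i}$ and the only in-neighbors of $b_i'$ are $v_i,\obar{v_i}$, and since $v_i$ and $\obar{v_i}$ are incomparable in the DAG (any forward path from either strictly advances the level index), the $b_i$-$b_i'$ path must be either $b_iv_ib_i'$ or $b_i\obar{v_i}b_i'$. An analogous argument shows each $c_j$-$c_j'$ path is of the form $c_j\ell c_j'$ where $\ell \in \{v_{i,j}:x_i\in c_j\} \cup \{\obar{v_{i,j}}:\obar{x_i}\in c_j\}$ is a literal appearing in $c_j$. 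Finally, each $s_\ell$-$t_\ell$ path ($\ell=1,2,3$) is forced to enter $P_i$ at $u_i$, traverse it fully to $v_i$, and then continue to level $i+1$ (or to use $\obar{P_i}$ analogously); the only conceivable detours $v_{i,j}\to c_j'$ or $v_i\to b_i'$ lead to sinks from which $t_\ell$ is unreachable. So each $s_\ell$-$t_\ell$ path is encoded by a sequence $(\sigma_1^\ell,\dots,\sigma_n^\ell)\in\{+,-\}^n$.

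Let $k_+^i = |\{\ell : \sigma_i^\ell = +\}|$ and $k_-^i = 3-k_+^i$. The vertex $u_i$ is used exactly $k_+^i$ times and has capacity $2$, so $k_+^i\le 2$, and symmetrically $k_-^i\le 2$; since $k_+^i + k_-^i = 3$ this forces $(k_+^i,k_-^i)\in\{(1,2),(2,1)\}$. The vertex $v_i$ is used $k_+^i$ times by the $s$-paths, plus one more time if the $b_i$-$b_i'$ path uses $v_i$. Hence if $k_+^i=2$ the $b_i$-$b_i'$ path must use $\obar{v_i}$, and symmetrically if $k_-^i=2$ it must use $v_i$. Define the truth assignment by $\tau(x_i)=\text{true}$ if and only if the $b_i$-$b_i'$ path uses $v_i$, equivalently if and only if $k_+^i=1$.

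To finish, I would verify each clause is satisfied. Consider the $c_j$-$c_j'$ path $c_j\ell c_j'$. If $\ell = v_{i,j}$ for some $i$ with $x_i\in c_j$, the capacity bound at $v_{i,j}$ gives $k_+^i + 1\le 2$, so $k_+^i=1$, whence $\tau(x_i)=\text{true}$ and the literal $x_i$ satisfies $c_j$. Symmetrically, if $\ell = \obar{v_{i,j}}$ with $\obar{x_i}\in c_j$, then $k_-^i=1$, whence $\tau(x_i)=\text{false}$ and $\obar{x_i}$ satisfies $c_j$. The main obstacle is the structural analysis in the second step: rigorously ruling out deviations of the $s_\ell$-$t_\ell$ paths from the canonical $P_i/\obar{P_i}$ traversal requires carefully enumerating out-neighbors of each internal vertex of $P_i$ in $G(F)-W$ and invoking the DAG structure to show no alternative exit can reach $t_\ell$.
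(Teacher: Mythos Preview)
Your approach is essentially the same as the paper's, and your final counting argument (steps 3--5) is correct. There is, however, one small gap in step 2.

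Your ``analogous argument'' that each $c_j$--$c_j'$ path must be a two-edge path $c_j\ell c_j'$ does not go through by DAG incomparability alone. For the $b_i$--$b_i'$ path you correctly use that $v_i$ and $\obar{v_i}$ are incomparable in $G(F)-W$. But the literal vertices of a single clause $c_j$ lie at \emph{different} levels (the three variables are distinct), and they are in general comparable: from $v_{i,j}$ one can follow $P_i$ to $v_i$, cross to $u_{i+1}$, and eventually reach $v_{i',j}$ for any $i'>i$. So a $c_j$--$c_j'$ path could in principle wander through several levels before returning to a literal of $c_j$ and entering $c_j'$.

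This does not actually break your proof. Your step 3 only needs $k_+^i\le 2$ (a lower bound on the usage of $u_i$ by the $s$-paths suffices), and your step 4 only needs a lower bound on the usage of $v_i$; both hold regardless of what the $c$-paths do. And in step 5 you really only use that the $c_j$--$c_j'$ path visits \emph{some} literal $\ell\in L_j$ (namely its second vertex, or its last vertex before $c_j'$), which is automatic. The capacity bound at that $\ell$ then gives $k_+^i\le 1$ (respectively $k_-^i\le 1$), exactly as you wrote.

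The paper handles this point in the other order: it first establishes that each $v_i$ and $\obar{v_i}$ is used exactly twice by the union of the $s$-paths and the $b_i$--$b_i'$ path, and \emph{then} deduces that no $c$-path can cross levels, forcing each $c_j$--$c_j'$ path to have exactly two edges. Either route works; yours just needs the phrasing in step 5 relaxed from ``the $c_j$--$c_j'$ path $c_j\ell c_j'$'' to ``let $\ell$ be a literal of $c_j$ lying on the $c_j$--$c_j'$ path''.
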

Suppose $(G(F),S,T)$ has a half-integral solution.
Observe that each vertex in $W$ belongs to both $S$ and $T$ and therefore cannot belong to any path in the solution for which it is not a terminal.
Thus, for $j=1,2,3$, every $s_j-t_j$ path must use either $P_i$ or $\obar{P_i}$, for each $1\leq i \leq n$.
We will show that the following assignment satisfies $F$: set variable $x_i$ to false if the path $\obar{P_i}$ is used by two of those paths, and to true if the path $P_i$ is used twice by those paths.

Every $b_i-b_i'$ path must use $v_i$ or $\obar{v_i}$. 
By half-integrality, the $b_i-b_i'$ path uses $v_i$ if $\obar{P_i}$ is used by two of the $s_j-t_j$ paths, and uses $\bar{v_i}$ if $P_i$ is used by two of those paths.
In particular, each $v_i$ and $\bar{v_i}$ must be used exactly twice by the union of the $s_j-t_j$ and $b_i-b_i'$ paths.
We deduce that each $c_j-c_j'$ path contains two edges.
Moreover, the middle vertex of that path is either $v_{i,j}$ where the variable $x_i$ belongs to $c_i$ and is set to true, 
or $\obar{v_{i,j}}$ where the negation of variable $x_i$ belongs to $c_i$ and $x_i$ is set to false.
This proves that $F$ is satisfiable.

\end{proof}

\bibliography{linkage}{}

\begin{thebibliography}{10}

\bibitem{Cygan2013}
M.~Cygan, D.~Marx, M.~Pilipczuk, and M.~Pilipczuk.
\newblock The planar directed k-vertex-disjoint paths problem is
  fixed-parameter tractable.
\newblock In {\em {Foundations of Computer Science (FOCS), 2013 IEEE 54th
  Annual Symposium on Foundations of Computer Science}}, pages 197--206, Oct
  2013.

\bibitem{diestel1999highly}
Reinhard Diestel, Tommy~R Jensen, Konstantin~Yu Gorbunov, and Carsten
  Thomassen.
\newblock Highly connected sets and the excluded grid theorem.
\newblock {\em {Journal of Combinatorial Theory, Series B}}, 75(1):61--73,
  1999.

\bibitem{FHW}
Steven Fortune, John Hopcroft, and James Wyllie.
\newblock The directed subgraph homeomorphism problem.
\newblock {\em {Theoretical Computer Science}}, 10(2):111--121, 1980.

\bibitem{JRST}
Thor Johnson, Neil Robertson, Paul~D Seymour, and Robin Thomas.
\newblock Directed tree-width.
\newblock {\em {Journal of Combinatorial Theory, Series B}}, 82(1):138--154,
  2001.

\bibitem{KKfull}
K.-i. {Kawarabayashi} and S.~{Kreutzer}.
\newblock {The Directed Grid Theorem}.
\newblock {\em ArXiv e-prints}, November 2014.

\bibitem{Kawarabayashi2008}
Ken-ichi Kawarabayashi.
\newblock An improved algorithm for finding cycles through elements.
\newblock In {\em {International Conference on Integer Programming and
  Combinatorial Optimization}}, pages 374--384. Springer, 2008.

\bibitem{KKK}
Ken-ichi Kawarabayashi, Yusuke Kobayashi, and Stephan Kreutzer.
\newblock An excluded half-integral grid theorem for digraphs and the directed
  disjoint paths problem.
\newblock In {\em {Proceedings of the 46th Annual ACM Symposium on Theory of
  Computing}}, STOC '14, pages 70--78, New York, NY, USA, 2014. ACM.

\bibitem{KK1}
Ken-Ichi Kawarabayashi and Stephan Kreutzer.
\newblock {\em An Excluded Grid Theorem for Digraphs with Forbidden Minors},
  pages 72--81.

\bibitem{digrid}
Ken-ichi Kawarabayashi and Stephan Kreutzer.
\newblock The directed grid theorem.
\newblock In {\em {Proceedings of the Forty-Seventh Annual ACM on Symposium on
  Theory of Computing}}, pages 655--664. ACM, 2015.

\bibitem{menger1927allgemeinen}
Karl Menger.
\newblock Zur allgemeinen kurventheorie.
\newblock {\em {Fundamenta Mathematicae}}, 1(10):96--115, 1927.

\bibitem{Reed}
Bruce Reed.
\newblock Introducing directed tree width.
\newblock {\em {Electronic Notes in Discrete Mathematics}}, (3):1--8, 2000.

\bibitem{GMXIII}
N.~Robertson and P.D. Seymour.
\newblock Graph minors .xiii. the disjoint paths problem.
\newblock {\em {Journal of Combinatorial Theory, Series B}}, 63(1):65 -- 110,
  1995.

\bibitem{Schrijver94}
Alexander Schrijver.
\newblock Finding k disjoint paths in a directed planar graph.
\newblock {\em {SIAM Journal on Computing}}, 23(4):780--788, 1994.

\bibitem{Slivkins}
Aleksandrs Slivkins.
\newblock Parameterized tractability of edge-disjoint paths on directed acyclic
  graphs.
\newblock {\em {SIAM Journal on Discrete Mathematics}}, 24(1):146--157, 2010.

\bibitem{Thomassen1}
Carsten Thomassen.
\newblock 2-linked graphs.
\newblock {\em {European Journal of Combinatorics}}, 1(4):371--378, 1980.

\bibitem{Thomassen2}
Carsten Thomassen.
\newblock Highly connected non-2-linked digraphs.
\newblock {\em {Combinatorica}}, 11(4):393--395, 1991.

\end{thebibliography}
\bibliographystyle{plain}

\end{document}